\newcommand{\ch}[1]{{\mbox{\raise 1pt\hbox{\Large$\chi$}}}_{\lower 1pt\hbox{$\scriptstyle #1$}}}
\def\1{\raisebox{2pt}{\rm{$\chi$}}}
\newtheorem{theorem}{Theorem}[section]
\newtheorem{corollary}[theorem]{Corollary}
\newtheorem{lemma}[theorem]{Lemma}
\newtheorem{claim}[theorem]{Claim}
\newtheorem{proposition}[theorem]{Proposition}
\theoremstyle{definition}
\newtheorem{definition}[theorem]{Definition}
\newtheorem{example}[theorem]{Example}
\theoremstyle{definition}
\newtheorem{remark}[theorem]{Remark}
\newcommand{\R}{{\mathbb R}}
\newcommand{\N}{{\mathbb N}}
\newcommand{\Z}{{\mathbb Z}}
\newcommand\diam{\operatorname{diam}}
\DeclareMathOperator{\Mu}{{Mu}}
\def\cprime{$'$}
\DeclareMathOperator*{\essinf}{ess\, inf}
\gdef\eeaa#1pt{#1}}      
\def\accentadjtext#1{\setbox0\hbox{$#1$}\kern   
                \expandafter\eeaa\the\fontdimen1\textfont1 \ht0 }
\def\accentadjscript#1{\setbox0\hbox{$#1$}\kern 
                \expandafter\eeaa\the\fontdimen1\scriptfont1 \ht0 }
\def\accentadjscriptscript#1{\setbox0\hbox{$#1$}\kern   
                \expandafter\eeaa\the\fontdimen1\scriptscriptfont1 \ht0 }
\def\accentadjtextback#1{\setbox0\hbox{$#1$}\kern       
                -\expandafter\eeaa\the\fontdimen1\textfont1 \ht0 }
\def\accentadjscriptback#1{\setbox0\hbox{$#1$}\kern     
                -\expandafter\eeaa\the\fontdimen1\scriptfont1 \ht0 }
\def\accentadjscriptscriptback#1{\setbox0\hbox{$#1$}\kern 
                -\expandafter\eeaa\the\fontdimen1\scriptscriptfont1 \ht0 }
\def\itoverline#1{{\mathsurround0pt\mathchoice
        {\rlap{$\accentadjtext{\displaystyle #1}
                \accentadjtext{\vrule height1.593pt}
                \overline{\phantom{\displaystyle #1}
                \accentadjtextback{\displaystyle #1}}$}{#1}}
        {\rlap{$\accentadjtext{\textstyle #1}
                \accentadjtext{\vrule height1.593pt}
                \overline{\phantom{\textstyle #1}
                \accentadjtextback{\textstyle #1}}$}{#1}}
        {\rlap{$\accentadjscript{\scriptstyle #1}
                \accentadjscript{\vrule height1.593pt}
                \overline{\phantom{\scriptstyle #1}
                \accentadjscriptback{\scriptstyle #1}}$}{#1}}
        {\rlap{$\accentadjscriptscript{\scriptscriptstyle #1}
                \accentadjscriptscript{\vrule height1.593pt}
                \overline{\phantom{\scriptscriptstyle #1}
                \accentadjscriptscriptback{\scriptscriptstyle #1}}$}{#1}}}}
\newcommand{\iol}{\itoverline}
\def\1{\raisebox{2pt}{\rm{$\chi$}}}
\def\vint_#1{\mathchoice%
        {\mathop{\kern 0.2em\vrule width 0.6em height 0.69678ex depth -0.58065ex
                \kern -0.8em \intop}\nolimits_{\kern -0.4em#1}}%
        {\mathop{\kern 0.1em\vrule width 0.5em height 0.69678ex depth -0.60387ex
                \kern -0.6em \intop}\nolimits_{#1}}%
        {\mathop{\kern 0.1em\vrule width 0.5em height 0.69678ex
            depth -0.60387ex
                \kern -0.6em \intop}\nolimits_{#1}}%
        {\mathop{\kern 0.1em\vrule width 0.5em height 0.69678ex depth -0.60387ex
                \kern -0.6em \intop}\nolimits_{#1}}}
\def\vintslides_#1{\mathchoice%
        {\mathop{\kern 0.1em\vrule width 0.5em height 0.697ex depth -0.581ex
                \kern -0.6em \intop}\nolimits_{\kern -0.4em#1}}%
        {\mathop{\kern 0.1em\vrule width 0.3em height 0.697ex depth -0.604ex
                \kern -0.4em \intop}\nolimits_{#1}}%
        {\mathop{\kern 0.1em\vrule width 0.3em height 0.697ex depth -0.604ex
                \kern -0.4em \intop}\nolimits_{#1}}%
        {\mathop{\kern 0.1em\vrule width 0.3em height 0.697ex depth -0.604ex
                \kern -0.4em \intop}\nolimits_{#1}}}
\newcommand{\intav}{\vint}
\newcommand{\ud}[0]{\,\mathrm{d}}
\newcommand{\dist}{\operatorname{dist}}
 \title[Weak porosity on metric measure spaces]{Weak porosity on metric measure spaces}
\date{August 21, 2025}
\author[Carlos Mudarra]{Carlos Mudarra}
\address{Department of Mathematical Sciences, Norwegian University of Science and Technology, 7941 Trondheim, Norway} \email{carlos.mudarra@ntnu.no}
\subjclass[2020]{28A75, 28A80, 30L99, 42B25, 42B37}
\keywords{Muckenhoupt weight, doubling weight, weak porosity, distance function, metric space, doubling measure, dyadic set}
\begin{document}

\begin{abstract}
We characterize the subsets $E$ of a metric space $X$ with doubling measure whose distance function to some negative power $\dist(\cdot,E)^{-\alpha}$ belongs to the Muckenhoupt $A_1$ class of weights in $X$. To this end, we introduce the weakly porous sets in this setting, and show that, along with certain doubling-type conditions for the sizes of the largest $E$-free holes, these sets characterize the mentioned $A_1$-property. We exhibit examples showing the optimality of these conditions, and simplify them in the particular case where the underlying measure satisfies a qualitative annular decay property. 
In addition, we use some of these distance functions as a new and simple method to explicitly construct doubling weights in $\R^n$ that do \textit{not} belong to $A_\infty.$ 
\end{abstract}

\maketitle

\section{Introduction and main results}

The classes of Muckenhoupt $A_p$ weights play a central role in harmonic and geometric analysis, and in potential theory both in $\R^n$ and in metric measure spaces. Among other relevant properties, they characterize the weights $w$ for which the Hardy-Littlewood maximal function defines a bounded operator between the appropriate weighted $L^p$-spaces. Also, provided the underlying measure supports a $(1,1)$-Poincar\'{e} inequality and the weight $w$ belongs to the $A_p$ class, then the corresponding $w$-weighted $(1,p)$-Poincar\'{e} inequality holds true; see e.g., \cite{B01}. Moreover, they are crucial in understanding the structure of the functions of bounded mean oscillation and the classes of quasiconformal mappings. For background about the theory of $A_p$ weights and related issues in harmonic analysis and potential theory in Euclidean and in metric measure spaces, we refer to the monographs by A. Bj\"orn and J. Bj\"orn \cite{BBbook}, Heinonen, Kilpel\"ainen, and Martio \cite{HKMbook}, Kinnunen, Lehrb\"ack, and V\"ah\"akangas \cite{KLVbook}, and Str\"omberg and Torchinsky \cite{ST}. 

\smallskip

Our framework will be a complete metric space $(X,d)$ equipped with a doubling Borel regular measure $\mu.$ We are interested in the $A_p$-properties of the weights of the form $w_{\alpha,E}(x) = \dist(x,E)^{-\alpha},$ $x\in X,$ for subsets $E $ of $X.$ The fundamental starting point for the results in this paper is our recent work \cite{ALMV22}. There, we considered the class of \textit{weakly porous} sets in $\R^n$ equipped with the Lebesgue measure, and proved that this is precisely the class of sets $E$ for which $w_{\alpha,E}\in A_1(\R^n)$ for some $\alpha>0.$ Moreover, we introduced the concept of Muckenhoupt exponent for a set $E$, which permitted us to quantify the range of those $\alpha$ for which $w_{\alpha,E}\in A_1(\R^n).$ Furthermore, in \cite{ALMV22} we constructed examples of sets $E$ for which $\dist(\cdot,E)^{-\alpha} \in A_p \setminus A_1$ for every $\alpha>0$ and $p>1.$

Previously, Vasin \cite{V03} obtained the corresponding $A_1$-characterization for subsets $E$ contained in the one-dimensional torus $\mathbb{T}$. Also, in \cite{DILTV19}, Dyda, Ihnatsyeva, Lehrb\"{a}ck, Tuominen, and V\"{a}h\"{a}kangas studied the $A_p$-properties of distance functions to \textit{porous} sets, and applied their results to obtain certain weighted Hardy-Sobolev inequalities. See also Semmes' work \cite{S96} (in Section 4) for results on the \textit{strong $A_\infty$ condition} of some weights given by distance functions to uniformly disconnected sets. A detailed study of the Minkowski and Hausdorff dimensions of porous and locally porous sets can be found in the paper of Salli \cite{Sa91}. For results concerning the Assouad dimension and spectrum of porous and/or fractals sets, and their distorsion under quasiconformal maps, we refer, for instance, to the papers of Chrontsios-Garitsis and Tyson \cite{CT23}, Luukkainen \cite{L98}, and Fraser and Yu \cite{FY22}, and the references therein. Finally, see the monograph \cite{F2014book} by Falconer, for a thorough exposition of fractal geometry and their applications.

The Muckenhoupt $A_1(X)$ class of weights is the smallest among all the classes $A_p(X)$, with $p \geq 1.$ We recall that a weight $w$ (a nonnegative locally integrable function in $X$) belongs to $A_1(X)$ if there exists some constant $C>0$ for which
$$
\intav_B w(x) \ud \mu(x) \leq C \essinf_{x\in B} w(x), \quad \text{for every ball} \quad B \subset X.
$$
This $A_1$-condition characterizes the weights $w$ for which the maximal function $M$ defines a bounded operator between the weighted spaces $M: L^1(X, w \ud \mu) \to L^{1,\infty}(X, w \ud \mu).$ 

\smallskip

The main goal of this paper is to obtain characterizations of the property $w_{\alpha,E}\in A_1(X)$ for some $\alpha>0$ similar to those in \cite{ALMV22}, when the underlying space is a metric space $(X,d,\mu)$ with a doubling measure. In order to solve this problem, we consider the following size of the \textit{maximal $E$-free holes}, namely, if $B=B(x,R)$, we set
$$
h_E(B) := \sup \lbrace 0<r \leq 2R \: : \: \text{there exists } y\in X \text{ with } B(y,r) \subset B(x,R) \setminus E \rbrace.
$$
Then we define the \textit{weakly porous sets} of $X$ as those $E \subset X$ for which there are two constants $c, \delta \in (0,1)$ such that every open ball $B=B(x,R) \subset X$ contains  finitely many non-overlapping open balls $\lbrace B_i=B(x_i,r_i) \rbrace_{i=1}^N $ containing no point of $E$ with
$$
\delta h_E(B) \leq r_i \leq 2R, \quad i=1,\ldots,N, \quad \text{and}  \quad  \sum_{i=1}^N \mu(B_i) \geq c \mu(B).
$$
Unlike in the case where $X=\R^n,$ the weak porosity does not provide a characterization by itself, and certain doubling-type behavior for the quantity $h_E$ is needed, that is, $h_E(2B) \lesssim h_E(B)$ for all balls $B.$ Along with this doubling property, the condition of weak porosity provides a complete characterization, and our first main result is the following.

\begin{theorem}\label{maintheoremallspaces}
Let $E\subset X$ be a subset. Then, the following statements are equivalent:
\begin{enumerate}[label=\normalfont{(\Alph*)}]
\item \label{maintheoremconditionwp} $E$ is weakly porous and $h_E$ satisfies the doubling condition.
\item \label{maintheoremconditionA1} There exists $\alpha>0$ for which $\dist(\cdot, E)^{-\alpha} \in A_1(X)$.
\end{enumerate}
\end{theorem}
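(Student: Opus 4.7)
My plan is to establish the two implications separately, adapting the strategy from the Euclidean result in \cite{ALMV22} while using the doubling condition on $h_E$ to compensate for the absence of an annular decay property in the general metric setting.

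For the implication \ref{maintheoremconditionwp} $\Rightarrow$ \ref{maintheoremconditionA1}, I would fix $\alpha>0$ (to be chosen small enough, depending on the weak porosity exponent, the doubling constant of $\mu$, and the doubling constant of $h_E$) and verify the $A_1$ inequality on an arbitrary ball $B=B(x_0,r)\subset X$. If $\dist(x_0,E)\geq 2r$, then $\dist(\cdot,E)$ is comparable to a constant on $B$ and the inequality is trivial. Otherwise, I would cover $B\setminus E$ with a Whitney-type decomposition into $E$-free balls $\{B_i\}$ of radii $r_i$ satisfying $\dist(x,E)\asymp r_i$ for $x\in B_i$, so that
\[
\intav_B \dist(x,E)^{-\alpha}\, d\mu(x)\;\lesssim\;\frac{1}{\mu(B)}\sum_{i} r_i^{-\alpha}\,\mu(B_i).
\]
I would then partition the index set into dyadic scales $r_i\asymp 2^{-k}h_E(B)$, use weak porosity to control the total measure at each scale, and invoke the doubling condition on $h_E$ so that the resulting geometric series converges and is controlled by $h_E(B)^{-\alpha}\asymp \essinf_B \dist(\cdot,E)^{-\alpha}$, provided $\alpha$ is sufficiently small.

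For the converse \ref{maintheoremconditionA1} $\Rightarrow$ \ref{maintheoremconditionwp}, I would extract geometric information from the $A_1$ inequality by testing it on carefully chosen configurations. Given a ball $B$ and a pairwise disjoint family $\{B_i\}$ of $E$-free balls inside $B$ with radii $r_i$, the pointwise lower bound $\dist(\cdot,E)\gtrsim r_i$ on the inner dilate of $B_i$ yields
\[
\sum_{i} r_i^{-\alpha}\,\frac{\mu(B_i)}{\mu(B)}\;\lesssim\;\intav_B \dist(x,E)^{-\alpha}\, d\mu(x)\;\leq\; C\,\essinf_B \dist(\cdot,E)^{-\alpha}\;\lesssim\; h_E(B)^{-\alpha},
\]
which by a dyadic grouping of the $B_i$ should translate into the weak porosity condition appearing in \ref{maintheoremconditionwp}. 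To obtain the doubling of $h_E$, I would compare the largest hole in $2B$ with the largest hole in $B$: if $h_E(2B)$ were substantially larger than $h_E(B)$, then $\essinf_{2B}\dist(\cdot,E)^{-\alpha}$ would be much smaller than the corresponding average, in a way incompatible with the $A_1$ inequality applied to $2B$.

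The step I expect to be the main obstacle is \ref{maintheoremconditionwp} $\Rightarrow$ \ref{maintheoremconditionA1}, and in particular the scale-by-scale summation of the Whitney-type decomposition. Without an annular decay property, the comparison between $h_E(B)$ and $h_E(\lambda B)$ for $\lambda>1$ is genuinely nontrivial, and the doubling assumption on $h_E$ must be invoked in tandem with weak porosity in order to keep the sum finite. Tracking the explicit dependence of the admissible $\alpha$ on the doubling constant of $\mu$, the weak porosity exponent, and the doubling constant of $h_E$ will be the most delicate part of the argument.
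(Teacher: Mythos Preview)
Your overall plan is sound and matches the paper's strategy at a high level, but the decisive step in \ref{maintheoremconditionwp} $\Rightarrow$ \ref{maintheoremconditionA1} is underspecified in a way that hides the main difficulty. The sentence ``use weak porosity to control the total measure at each scale'' is the gap: weak porosity is a \emph{lower} bound on the $E$-free mass at scales $\gtrsim \delta\, h_E(B)$, not an \emph{upper} bound on the Whitney mass at scale $2^{-k}h_E(B)$. Turning one into the other requires an iteration in which, at every step, weak porosity peels off a fixed fraction $c$ of the remaining measure and the doubling of $h_E$ guarantees that the relevant hole scale shrinks by at most a fixed factor. For that you must control $h_E(B')$ on the balls $B'$ covering the residual set at step $k$; since balls do not nest, it is not automatic that a \emph{single} application of $h_E$-doubling suffices per step, and this is exactly where the argument needs work. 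The paper sidesteps this by passing to an equivalent dyadic formulation (dyadic weak porosity plus doubling of the dyadic hole generation $g_E$) and running a stopping-time iteration on nested cubes: families $\mathcal F_M^k$, $\mathcal G_M^k$ are built so that each residual cube $R\in\mathcal G_M^k$ has a dyadic parent meeting one of the removed $E$-free cubes, which makes one use of $g_E$-doubling per step enough (Claim \ref{claiminduction}, Lemma \ref{keylemmasufficiency}). A further ingredient with no obvious Whitney analogue is the $\varepsilon$-boundary estimate for dyadic cubes (Proposition \ref{furtherpropertiesofdyadicsets}), used to integrate $\dist(\cdot,E)^{-\gamma}$ over $E$-free cubes that abut $E$.

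For \ref{maintheoremconditionA1} $\Rightarrow$ \ref{maintheoremconditionwp} your idea is close to the paper's (Lemma \ref{dyadicimplicationsA1}), but the displayed inequality is in the wrong direction: from $\dist(\cdot,E)\gtrsim r_i$ on $B_i$ one gets $\dist(\cdot,E)^{-\alpha}\lesssim r_i^{-\alpha}$, so $\sum_i r_i^{-\alpha}\mu(B_i)$ is an upper bound for $\int_{\cup B_i}\dist(\cdot,E)^{-\alpha}$, not a lower one, and this does not directly yield weak porosity. The paper instead bounds the measure of the \emph{complement} of the union of large $E$-free cubes by observing that there $\dist(\cdot,E)$ is forced to be small, and then applies the $A_1$ inequality; the doubling of $h_E$ is obtained by a similar pointwise upper bound $\dist(x,E)\lesssim \theta^{g_E(Q_0)}$ for all $x\in Q_0$, which again relies on the nesting of dyadic cubes and has no immediate ball analogue.
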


The proof of this theorem is given in Section \ref{sectionsufficiencymaintheorem}. Notice that, for positive powers $\beta>0,$ the weight $v=\dist(\cdot, E)^{\beta} $ belongs to $A_1(X)$ only when $E$ is dense in $X$, i.e., when $v$ is identically zero. The definitions of a weakly porous set and the doubling condition for $h_E$ are restated in Definitions \ref{definitionweakporballs} and \ref{definitionofdoubling} below.

When $X=\R^n$ is equipped with the Lebesgue measure, we proved in \cite{ALMV22} that $h_E$ always satisfies the doubling condition for weakly porous sets $E$, and thus the second assumption in Theorem \ref{maintheoremallspaces}\ref{maintheoremconditionwp} is unnecessary in this setting. However, there are connected metric spaces with doubling measure containing subsets $E$ that are weakly porous and yet $h_E$ is not doubling. This will be shown in this paper via Example \ref{examplenotdoublinghole} in Section \ref{sectiondoublingholesexamples}, where we further prove that for such weakly porous set $E,$ the functions $\dist(\cdot,E)^{-\alpha}$, $\alpha>0,$ do not even define doubling weights.

In addition, we give an example of a set $E \subset \R$ for which all the functions of the form $\dist(\cdot,E)^{-\alpha},$ with $\alpha <1,$ define weights in $\R,$ the maximal $E$-free hole $h_E$ is doubling, and still $E$ is not weakly porous. Thus we show that the properties of weak porosity and doubling maximal holes are independent of each other in the general setting. Remarkably, this set $E$ is so that, for all $\alpha<1,$ $\alpha \neq 0,$ $\dist(\cdot,E)^{-\alpha}$ defines a doubling weight that does not belong to any $A_p(\R)$ class, thus providing a new family of doubling weight functions that are not in $A_\infty(\R)$. The first example of such weights was by Fefferman and Muckenhoupt \cite{FeMu74}.

\begin{theorem}\label{thm:exampleDoublingNotAinfty}
Endow $\R$ with the usual distance and the Lebesgue measure. Then there exists $E \subset \R$ with doubling maximal holes $h_E$, and so that, for all $ \alpha <1,$ $\alpha \neq 0,$ the function $w_\alpha:=\dist(\cdot, E)^{-\alpha} $ is a doubling weight in $\R$ that does \emph{not} belong to $A_\infty(\R)$.

In particular, $E$ is not weakly porous. 
\end{theorem}

The proof of this theorem is given in Section \ref{sectionbigcounterexample}. This theorem has an obvious generalization to $\R^n,$ by replacing $E \subset \R$ with $  E \times \R^{n-1} \subset \R^n$; see Remark \ref{rem:generalizationR^nExample}.

We will reformulate Theorem \ref{thm:exampleDoublingNotAinfty} with more details in Example \ref{exampledoublignholesdoesnotimplywp}, but here we offer a brief description of the set $E.$ We begin with the two-point set $E_0= \lbrace 0, 1 \rbrace,$ and define $E_1$ by \textit{chaining}, from left to right, the set $E_{0},$ a $1/2$-contraction of a copy of $E_{0}$, and finally another copy of $E_{0}.$ With the same procedure we define recursively $E_2,E_3,\ldots,$ and let $E^+$ be the union of all the $E_n$'s. It is precisely this \textit{palindromic} distribution of the various copies of $E_0$ which allows to show the doubling properties of the theorem. The desired set $E$ is merely the symmetrization of $E^+$ about the origin.

\smallskip

It is also worth noticing the beautiful example by Wik \cite[p. 253]{W89} of a doubling weight $w$ in $\R$ with $w \notin A_\infty(\R).$ Wik's example can be seen as a distance function to a set $F$ constructed by chaining bounded sets $\lbrace F_n \rbrace_n$, where each $F_n$ contains in turn infinitely many copies of (arbitrarily) small $2^{-k}$-contractions of the previous $F_{n-1}.$ The initial set $F_0$ is also a two-point set. As in our construction, these sets are conveniently chained in a symmetric way. The set $E$ in Theorem \ref{thm:exampleDoublingNotAinfty} is, perhaps, simpler, as each of the main \textit{blocks} $E_n$ of our construction is a finite set whose smallest gap has size $2^{-n}.$ Also, the advantage of our Theorem \ref{thm:exampleDoublingNotAinfty} is that, besides showing that $h_E$ is doubling, it tells us that \textit{all} the powers $ \lbrace \dist(\cdot, E)^{-\alpha} \rbrace_{\alpha \in (-\infty,1) \setminus \lbrace 0 \rbrace },$ are doubling weights outside the $A_\infty(\R)$ class.

\medskip

Going back to the general setting, if our metric measure space $(X,d,\mu)$ additionally satisfies a weak version of the standard annular decay property \eqref{measureannulardecayproperty}, the situation changes and the doubling condition for $h_E$ is a consequence of the weak porosity of $E$. The resulting $A_1$-characterization is therefore as clean as in $\R^n.$ 

\begin{theorem}\label{maintheoremannular}
Assume that $(X,d,\mu)$ satisfies the annular decay property \eqref{measureannulardecayproperty}, and let $E \subset X$ be a subset. Then, the following statements are equivalent:
\begin{enumerate}[label=\normalfont{(\Alph*')}]
\item\label{maintheoremannularwpcondition} $E$ is weakly porous.
\item There exists $\alpha>0$ for which $\dist(\cdot, E)^{-\alpha} \in A_1(X)$.
\end{enumerate}
\end{theorem}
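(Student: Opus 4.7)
The plan is to reduce Theorem \ref{maintheoremannular} to Theorem \ref{maintheoremallspaces} by establishing, as an auxiliary statement, that under the annular decay condition \eqref{measureannulardecayproperty}, weak porosity of $E$ already forces $h_E$ to satisfy the doubling condition. Once this auxiliary statement is in hand, both directions of Theorem \ref{maintheoremannular} follow at once: (B') $\Rightarrow$ (A') is just (B) $\Rightarrow$ (A) of Theorem \ref{maintheoremallspaces} after discarding the additional doubling conclusion, while (A') $\Rightarrow$ (B') is obtained by upgrading (A') into the full hypothesis (A) of Theorem \ref{maintheoremallspaces} via the auxiliary claim, and then applying (A) $\Rightarrow$ (B) to conclude that $\dist(\cdot, E)^{-\alpha} \in A_1(X)$ for some $\alpha > 0$.

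To prove the auxiliary claim, I would fix a ball $B(x,R)$, set $h_2 := h_E(B(x,2R))$, and aim at exhibiting an $E$-free ball inside $B(x,R)$ of radius comparable to $h_2$. The strategy is to apply weak porosity on $B(x,2R)$ (or on a slightly enlarged ball $B(x,(1+\varepsilon)R)$, with $\varepsilon$ to be calibrated) to extract a disjointed family of $E$-free balls $\{B_i = B(x_i, r_i)\}$ with $r_i \geq \delta h_2$ and $\sum_i \mu(B_i) \geq c\,\mu(B(x,2R))$. If some $B_i$ sits entirely inside $B(x,R)$, we are done. Otherwise each $B_i$ either crosses $\partial B(x,R)$ or lies entirely outside $B(x,R)$, and a short geometric computation confines the small-radius $B_i$ to a thin annular region around $\partial B(x,R)$. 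The annular decay property \eqref{measureannulardecayproperty} provides quantitative control on the measure of such thin regions, which, combined with a pigeonhole-type measure comparison against the mass bound $c\,\mu(B(x,2R))$ from weak porosity and handling the large-radius $B_i$ separately via doubling of $\mu$, rules out the scenario in which every $B_i$ avoids $B(x,R)$. This yields a free ball inside $B(x,R)$ of radius comparable to $h_2$, and therefore the doubling estimate $h_E(B(x,2R)) \leq C\,h_E(B(x,R))$.

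The main obstacle I foresee is the quantitative calibration between the annular decay parameters, the doubling constant of $\mu$, and the weak porosity constants $c,\delta$, together with the case split of the admissible balls $B_i$ into small-radius and large-radius families. Precisely this calibration fails in the general setting of Example \ref{examplenotdoublinghole}, where a weakly porous set in a connected doubling space has non-doubling $h_E$; the role of annular decay in Theorem \ref{maintheoremannular} is exactly to supply the additional geometric regularity needed to preclude such pathology, at which point the reduction to Theorem \ref{maintheoremallspaces} closes the proof.
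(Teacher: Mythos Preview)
Your overall strategy is exactly the paper's: reduce Theorem~\ref{maintheoremannular} to Theorem~\ref{maintheoremallspaces} by proving that, under~\eqref{measureannulardecayproperty}, weak porosity forces $h_E$ to be doubling, via a contradiction argument that confines the admissible $E$-free balls to a thin annulus whose measure is controlled by~\eqref{measureannulardecayproperty}.

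The paper, however, sidesteps the case split you flag as the main obstacle. Before running the annulus argument, it proves a rescalling lemma (Lemma~\ref{rescalinglemma}): given any $\tau\in(0,1/2)$, one may replace the admissible family from weak porosity by a disjoint family of $E$-free balls whose radii additionally satisfy $s_n\le \tau R_0$, at the cost of replacing $c$ by $c\,C_\mu^{-2}$ and $\delta$ by some $\delta_\tau$. This is done by covering each original admissible ball $B(x_j,r_j)$ with a $\tau r_j$-net and passing to the half-radius subballs. With this in hand there simply are no ``large-radius'' $B_i$ to handle separately: every admissible ball has radius at most $\tau\lambda^* R_0$, so if its radius also exceeds $\delta^* h_E(\lambda^* B_0)>h_E(B_0)$ it cannot be centred in $(1-\tau\lambda^*)B_0$, hence lies entirely in the annulus $\lambda^* B_0\setminus(1-2\tau\lambda^*)B_0$. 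Choosing $\lambda^*$ close to $1$ and $\tau$ small calibrates this annulus to~\eqref{measureannulardecayproperty} with $\eta=c\,C_\mu^{-2}$, yielding the contradiction (Lemma~\ref{lemmaannulardecayweaklyporousdoubling}(i)); iterating the resulting estimate $h_E(B_0)\ge\delta^* h_E(\lambda^* B_0)$ gives doubling.

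Your proposed route of handling large-radius $B_i$ ``via doubling of $\mu$'' is where a genuine gap could appear: in a general metric space an annulus $B(x,(1+\varepsilon)R)\setminus B(x,R)$ can contain balls of radius comparable to $R$, so large $B_i$ need not intersect $B(x,R)$ at all, and bounding their number does not bound their total measure below $c\,\mu(B(x,(1+\varepsilon)R))$. The rescalling lemma is precisely the device that makes this issue disappear.
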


For the proof of our results, we will formulate a condition of \textit{dyadic weak porosity} for subsets of $X$; see Definition \ref{definitiondyadicweakporosity}. This formulation is given in terms of a system of \textit{adjacent dyadic grids} in doubling metric spaces, which provide partitions of the space at all scales, into sets that enjoy many of the good properties of the usual dyadic grids of cubes in $\R^n.$ They have proven to be instrumental in harmonic and geometric analysis in metric spaces; as shown, for instance, by the classical and recent works of Christ \cite{C90}, Hyt\"onen and Kairema \cite{HK12}, and K\"aenm\"aki, Rajala and Suomala \cite{KRS12}. We additionally prove some estimates for the measures of the $\varepsilon$-boundaries of dyadic sets. These estimates were obtained by Christ \cite{C90} for the corresponding Christ's (open) cubes, as well as by Hyt\"onen and Kairema \cite{HK12} for the half-open cubes in a probabilistic form. 

\smallskip

Using this dyadic formulation of weak porosity, the idea of proof of Theorem \ref{maintheoremallspaces} is inspired by that in the Euclidean setting from \cite{ALMV22}. We decompose each dyadic set into carefully chosen $E$-free descendants that conveniently depend on the parameters of weak porosity of the set $E$. By means of a dyadic iteration and employing the assumption  that the maximal $E$-free holes $h_E$ is doubling, we obtain a discrete and quantitative lower estimate for the sizes of the maximal holes in terms of the maximal holes of those dyadic descendants; see Lemma \ref{keylemmasufficiency}. Finally, we prove the $A_1$-estimates over every dyadic set of the systems, making use of the mentioned estimates for the measures of the $\varepsilon$-boundaries. 

\smallskip

Dealing with systems of dyadic sets in our proofs will lead us to several technicalities. However, there are two main reasons for wanting to use them. 1) There is a suitable partial order between the dyadic sets that allows to derive many properties for the size of the largest (dyadic) $E$-free holes, as well as the key discrete lower estimate for the maximal holes of a dyadic set in terms of the ones of its descendants. 2) Even if we were able to obtain such a key estimate for some balls at various scales, it would not be possible to estimate integrals of the type $\intav_B \dist(\cdot, E)^{-\gamma}$ in terms of $\textrm{rad}(B)^{-\gamma}$, e.g., when $B$ is close to $E.$ This is mainly due to the fact that there is no control on the measure of an $\varepsilon$-neighborhood of $\partial B$ within $B,$ as the measure of a ball may be concentrated near the boundary in arbitrary metric spaces with doubling measure. However, the same integral averages replacing balls $B$ with a dyadic set $Q$ can be estimated from above by a power of the \textit{length} of $Q,$ if one uses conveniently the quantitative (with power-type rate) estimates we will prove for the measure of the $\varepsilon$-boundaries of the dyadic sets of the systems; see Sections \ref{sufficiencyestimatesclosecubes} and \ref{sectionestimatesnonfree}.

 \smallskip

As concerns the proof of Theorem \ref{maintheoremannular}, we first show a lemma of rescaling (Lemma \ref{rescalinglemma}) that essentially allows us to utilize $E$-free admissible balls with very small radii in the definition of weak porosity. Together with the assumption \eqref{measureannulardecayproperty}, this enables us to obtain certain absolute continuity of the maximal $E$-free holes of balls with respect to their radii, and this leads us to the doubling property for $h_E;$ see Lemma \ref{lemmaannulardecayweaklyporousdoubling}. Then, Theorem \ref{maintheoremannular} follows as a consequence of Theorem \ref{maintheoremallspaces}.

 \smallskip

The structure of the paper is as follows. In Section \ref{sectionpreliminaries}, we collect some basic properties of Muckenhoupt weights and also explain and prove the properties of the systems of dyadic sets we will be using. In Section \ref{sectiondefinitionswporosity}, we define the concepts of weak porosity and dyadic weak porosity, along with the doubling conditions for the size of the largest free holes associated with a set, and prove the relevant relationships. In Section \ref{sectionecessitymaintheorem}, we give the proof of the implication \ref{maintheoremconditionwp}$\implies$\ref{maintheoremconditionA1} in Theorem \ref{maintheoremallspaces}. Sections \ref{sectiondiscreteinequalityA1} and \ref{sectionsufficiencymaintheorem} are devoted to the proof of the converse implication of Theorem \ref{maintheoremallspaces}, Lemma \ref{keylemmasufficiency} being one of the key ingredients. 
Then, in Section \ref{sectionsufficiencymaintheorem}, we prove the desired $A_1$-estimates for the weight $\dist(\cdot,E)^{-\alpha}$. In Section \ref{sectionproofmainthmannularabsolute}, we prove Theorem \ref{maintheoremannular}, and we also show how a variant of the dyadic weak porosity condition, called \textit{absolute dyadic weak porosity}, provides another characterization of the sets $E$ for which $\dist(\cdot,E)^{-\alpha} \in A_1(X)$ in the general setting; see Corollary \ref{corollaryabsolutedyadicwp}. In Section \ref{sectiondoublingholesexamples}, we give two examples that prove that the properties of weak porosity and doubling maximal free holes do not imply each other, and also construct new doubling weights that are not in $A_\infty(\R),$ thus proving Theorem \ref{thm:exampleDoublingNotAinfty}. Finally, in Section \ref{sectionMuexponent}, we define the Muckenhoupt exponent associated with a set and we use it to determine the exponents $\alpha$ for which $\dist(\cdot, E)^{-\alpha} \in A_p$, for a given $1\leq p < \infty.$

\medskip

\textbf{Acknowledgements.} I am very grateful to Juha Lehrb\"ack and Antti V\"ah\"akangas for their valuable comments and suggestions on earlier versions of this manuscript. I also thank the referees for several suggestions that improved the exposition of the paper. Finally, I acknowledge financial support from the Academy of Finland via the project ``Geometric Analysis'' (grant No. 323960) and from the Research Council of Norway via the project ``Fourier Methods and Multiplicative Analysis'' (grant no. 334466).

\section{Preliminary concepts and systems of dyadic sets}\label{sectionpreliminaries}
The underlying space we will be working on is a complete metric space $(X,d).$ For every $x\in X$ and $r>0,$ the open ball centered at $x$ and with radius $r>0$ is denoted by
$$
B(x,r):=\lbrace y\in X \, : \, d(x,y) <r \rbrace.
$$
We will denote the distance between any two subsets $E,F \subset X$ by $\dist(E,F):= \inf\lbrace d(x,y) \, : \, x\in E,\,y\in F \rbrace,$ and the distance from a point $p\in X$ to a subset $E$ by $\dist(p,E):=\inf\lbrace d(p,x) \, : \, x\in E \rbrace.$

In addition, we endow $(X,d)$ with a complete Borel regular measure $\mu,$ satisfying the following standard doubling condition: the existence of a constant $C_\mu>0$ for which
\begin{equation} \label{Cmudoublingmeasure}
0<\mu\left( B(x,2r) \right) \leq C_\mu \,\mu \left( B(x,r) \right)< \infty, \quad \text{for all} \quad x\in X, \, r>0.
\end{equation}
Notice that then $0<\mu(B)<\infty$ for every ball $B.$ Condition \eqref{Cmudoublingmeasure} implies that $X$ is separable, and also proper, meaning that every closed and bounded set is compact. 

Since our metric space $(X,d)$ supports a doubling measure \eqref{Cmudoublingmeasure} $\mu$, the space $(X,d)$ is geometrically doubling, meaning that there exists $N\in \N$ so that for every $x\in X$ and $r>0,$ the ball $B(x,r)$ can be covered by at most $N$ balls $B(x_1, r/2), \ldots, B(x_N, r/2).$ The smallest of those $N\in \N$ is called the geometric doubling constant, and is  related to our constant $C_\mu.$

We will also use the notation
\[
 \vint_F w \ud \mu  = \frac 1 {\mu( F)} \int_F w(x) \ud\mu(x)
\]
for the mean value integral over a measurable set $F\subset X$ with $0<\mu(F)<\infty$. Also, in some cases we will denote by $w(F)$ the integral $\int_F w \ud \mu,$ for any weight $w$ and $F\subset X$ a measurable subset.

A weight $w$ in $X$ is any nonnegative and locally integrable function of $X.$ We next remind the reader the definition of an $A_p$ weight, for $1\leq p < \infty.$

 \begin{definition}\label{d.A1}
A weight $w$ in $X$ belongs to the Muckenhoupt class $A_1$ if
there exists a constant $C \geq 0$ such that
\begin{equation}\label{e.a_1}
 \vint_B w \ud\mu  \le C\, \essinf_{x\in B} w(x),
\end{equation}
for every ball $B \subset X$. The smallest possible constant $C$ in~\eqref{e.a_1} is called the $A_1$ constant of $w$, and it is denoted by $[w]_{A_1}$.

Also, for $1<p< \infty,$ a weight belongs to the Muckenhoupt class $A_p(X)$ if
\begin{equation}\label{e.A_pd}
[w]_{A_p}:=\sup_{B \subset X} \left( \vint_B w \ud\mu \right) \biggl(\vint_B w^{\frac{1}{1-p}}\ud\mu\biggr)^{p-1} < \infty,
\end{equation}
where the supremum is taken over all balls $B$ of $X.$ 
\end{definition}

The inclusions $A_p(X) \subset A_q(X)$ hold for any $1\leq p \leq q <\infty.$

A straightforward consequence of the H\"older inequality in combination with \eqref{Cmudoublingmeasure} is that a weight $w$ belonging to any of these $A_p$-classes satisfies the following \textit{doubling property}:
$$
w(B(x,2R)) \leq C(p,[w]_{A_p},C_\mu) w(B(x,R))\, \quad \text{for every} \quad x\in X, \, R>0.
$$
This fact will be used in Section \ref{sectiondoublingholesexamples}. A less elementary property of $A_p$ weights is their well-known \textit{self-improvement}; see \cite[Lemma 2.12]{KM21} in combination with \cite[Theorem I.15]{ST}. We state it together with an additional property involving powers of Muckenhoupt weights, the proof of which is identical to that of \cite[Lemma 2.3]{ALMV22}. 

\begin{proposition}\label{propositionselfimprovement}
{\em The following two properties hold.
\begin{enumerate}
\item If $w\in A_p(X)$ for $1\leq p < \infty,$ there exists $s=s(p,[w]_{A_p},C_\mu)>1$ so that $w^s\in A_p(X).$ 
 
\item If $w\in A_1(X),$ and $s\geq 0$ is so that $w^s \in \bigcup_{p >1} A_p(X),$ then $w^s\in A_1(X)$ as well.
\end{enumerate}}
\end{proposition}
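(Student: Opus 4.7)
Both parts are essentially standard, and my plan is to address them in turn. For~(1), the strategy is to invoke the reverse Hölder self-improvement of $A_p$ weights in doubling metric measure spaces. A classical argument (see, e.g., \cite[Chapter I]{ST} or \cite[Lemma 2.12]{KM21}) yields some $\epsilon>0$, depending only on $p$, $[w]_{A_p}$ and $C_\mu$, so that
$$\left(\vint_B w^{1+\epsilon}\,d\mu\right)^{1/(1+\epsilon)} \leq C\,\vint_B w\,d\mu$$
for every ball $B\subset X$. If $p=1$, combining this with the $A_1$-bound $\vint_B w\,d\mu \leq [w]_{A_1}\,\essinf_{B}w$ immediately yields $w^{1+\epsilon}\in A_1(X)$. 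If $p>1$, one additionally uses the standard duality $w^{-1/(p-1)}\in A_{p'}(X)$ and its own reverse Hölder estimate; balancing both improvements inside the $A_p$-expression then produces some $s>1$ with $w^s\in A_p(X)$. The only real obstacle is the reverse Hölder inequality itself, which is well-established in the present setting.

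For~(2), the plan is a direct Jensen-type argument combining both hypotheses. Fix a ball $B\subset X$ and write $w_B:=\essinf_{B}w$; since $w\geq w_B$ almost everywhere on $B$, we also have $\essinf_B w^s = w_B^s$ for every $s\geq 0$. If $0\leq s\leq 1$, the concavity of $t\mapsto t^s$ together with Jensen's inequality and the $A_1$-condition on $w$ give
$$\vint_B w^s\,d\mu \leq \left(\vint_B w\,d\mu\right)^{s} \leq [w]_{A_1}^s\, w_B^s,$$
and hence $w^s\in A_1(X)$ with constant at most $[w]_{A_1}^s$.

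For $s>1$, fix $p>1$ with $w^s\in A_p(X)$. From the defining inequality,
$$\vint_B w^s\,d\mu \cdot \left(\vint_B w^{-s/(p-1)}\,d\mu\right)^{p-1} \leq [w^s]_{A_p}.$$
The map $t\mapsto t^{-s/(p-1)}$ is convex on $(0,\infty)$, its exponent being negative, so Jensen's inequality yields
$$\left(\vint_B w^{-s/(p-1)}\,d\mu\right)^{p-1} \geq \left(\vint_B w\,d\mu\right)^{-s}.$$
Inserting this in the previous display and using the $A_1$-bound for $w$,
$$\vint_B w^s\,d\mu \leq [w^s]_{A_p}\left(\vint_B w\,d\mu\right)^{s} \leq [w^s]_{A_p}\,[w]_{A_1}^s\, w_B^s,$$
which is exactly the $A_1$-condition for $w^s$.

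The main obstacle in~(1) is the reverse Hölder inequality itself, which is however classical in this setting. The essential step in~(2) is spotting that Jensen's inequality applied to the convex map $t\mapsto t^{-s/(p-1)}$ converts the $A_p$-expression for $w^s$ into a bound directly amenable to the $A_1$-estimate on $w$; everything else is routine bookkeeping.
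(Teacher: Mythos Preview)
Your proof is correct and follows the same approach the paper points to: part~(1) is exactly the reverse H\"older self-improvement the paper references via \cite[Lemma~2.12]{KM21} and \cite[Theorem~I.15]{ST}, and your Jensen-inequality argument for part~(2) is the standard one underlying \cite[Lemma~2.3]{ALMV22}. The paper gives no further details beyond these citations, so your write-up in fact supplies them.
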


\medskip

A fundamental tool in the proof of our results is the systems of adjacent dyadic sets. These systems can be constructed in any geometrically doubling metric (or even quasi-metric) space. They provide partitions of the space $X$ into mutually disjoint sets at every scale, and enjoy very similar properties to those of the dyadic cubes in $\R^n.$ The Christ's construction \cite{C90} provided an almost everywhere-covering of $X,$ via open sets satisfying these properties. More recent developments of this technique provided a finite collection of systems of dyadic \textit{half-open} sets, with the additional properties that their union cover \textit{all points} of $X$, and every ball of radius $r$ is contained in a dyadic set of one of these systems that has \textit{length} approximately $r.$ Our main reference here will be the paper by Hyt\"onen and Kairema \cite{HK12}, where a probabilistic approach is used to obtain further properties for this construction that lead to sharp weighted norm inequalities for the maximal function operator. It is also worth mentioning the work by K\"aenm\"aki, Rajala, and Suomala \cite[Theorem 2.1]{KRS12}, where similar constructions of dyadic sets were developed and applied to prove the existence of doubling measures in geometrically-doubling metric spaces.

\medskip

The following result is contained in \cite[Theorem 4.1--Remark 4.13]{HK12} for the more general setting of (geometrically) doubling quasi-metric spaces.

\begin{proposition}\label{theoremexistencedyadicgrids}
Fix $0< \theta < 1/1000$. For every $k\in \Z$ and $t=1,\ldots,T=T(\theta, C_\mu) \in \N,$ there exist a countable set of points $\{z_\alpha^{k,t}\mathbin{:}\alpha\in\mathcal{A}_k\} \subset X,$ and a collection $\mathcal{D}_t = \{ Q_\alpha^{k,t}\mathbin{:} \alpha\in \mathcal{A}_k\}$ of Borel subsets of $X$ such that
\begin{enumerate}[label=\normalfont{(\arabic*)}]
\item $X = \biguplus_{\alpha\in \mathcal{A}_k}Q_\alpha^{k,t}$ for every $k\in \Z$ and every $t=1\,\ldots,T$; 
\item If $Q_{\alpha}^{k,t}, Q_\beta^{l,t} \in \mathcal{D}_t$ and $l \geq k,$ then
$$
\textit{either } \: Q_\beta^{l,t} \subset Q_\alpha^{k,t} \:  \text{ or } \: Q_{\alpha}^{k,t} \cap Q_\beta^{l,t} =\emptyset,
$$
for every $t=1,\ldots,T$; 
\item\label{thm:dyadics.subper} for every $Q_\alpha^{k,t}\in\mathcal{D}_t$ we have 
\[ B(z_\alpha^{k,t}, a \theta^k) \subset Q_\alpha^{k,t} \subset B(z_\alpha^{k,t},A\theta^k),\]
where $0<a<A< \infty$ are absolute constants;
\item\label{thm:dyadics.containingball} for every ball $B=B(x,r)$ there exists a cube $Q\in \mathcal{D}:=\bigcup_{t=1}^T \mathcal{D}_t $ such that $B\subset Q $ and $Q  = Q_\alpha^{k, t}$ for some indices $\alpha$ and $t$, and $k$ being the unique integer such that $\theta^{k+2}<r\leq\theta^{k+1}$. 
\end{enumerate} 
\end{proposition}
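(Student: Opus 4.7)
The plan is to follow the classical Christ-type construction, augmented with the ideas of Hyt\"onen and Kairema to obtain the containing-ball property (4) through a finite family of adjacent systems. The starting point, for each index $t$ and each scale $k\in\Z$, will be a maximal $\theta^k$-separated set $\{z_\alpha^{k,t}\}_{\alpha\in\mathcal{A}_k}\subset X$; maximality ensures that the balls $B(z_\alpha^{k,t},\theta^k/2)$ are pairwise disjoint while the balls $B(z_\alpha^{k,t},\theta^k)$ cover $X$. The geometric doubling of $X$ inherited from \eqref{Cmudoublingmeasure} guarantees that $\mathcal{A}_k$ is countable and that the number of nearby centers at each scale is uniformly controlled.

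Next, I would build a coherent parent--child tree among these centers. For each $k$ and each $t$, choose a map $\pi_k^t:\mathcal{A}_{k+1}\to\mathcal{A}_k$ by sending $\alpha$ to any $\beta$ minimizing $d(z_\alpha^{k+1,t},z_\beta^{k,t})$. Define the open pre-cube $\widetilde{Q}_\alpha^{k,t}$ as the union of $B(z_\alpha^{k,t},\theta^k/2)$ together with all descendant balls $B(z_\gamma^{l,t},\theta^l/2)$, $l\geq k$, whose iterated parent under $\pi$ is $\alpha$. Taking $\theta<1/1000$ makes the parent a factor of at least $1000$ closer than competing centers, so elementary geometric estimates yield the sandwich bound $B(z_\alpha^{k,t},a\theta^k)\subset \widetilde{Q}_\alpha^{k,t}\subset B(z_\alpha^{k,t},A\theta^k)$ for absolute $0<a<A<\infty$, establishing (3). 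To upgrade the pre-cubes to a genuine partition as in (1), fix a well-ordering of $\mathcal{A}_k$ for each $k$ and replace $\widetilde{Q}_\alpha^{k,t}$ by $Q_\alpha^{k,t}:=\overline{\widetilde{Q}_\alpha^{k,t}}\setminus \bigcup_{\beta<\alpha}\overline{\widetilde{Q}_\beta^{k,t}}$; this preserves the sandwich property (with possibly slightly worse $a$ and $A$) and, since the parent map controls descent, also produces the nesting in (2).

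The main obstacle is property (4): a single such system need not contain all balls, because a ball may straddle the boundary of the cubes at every scale. This is resolved by introducing the finite family $\{\mathcal{D}_t\}_{t=1}^T$ of adjacent systems, constructed so that for every ball $B=B(x,r)$ and the unique $k$ with $\theta^{k+2}<r\leq \theta^{k+1}$, there exists at least one $t$ for which $B$ lies well inside a single cube of $\mathcal{D}_t$. Following Hyt\"onen--Kairema, one chooses the centers $\{z_\alpha^{k,t}\}$ through a randomized procedure and shows that with positive probability the desired inclusion holds at every scale; by independence and geometric doubling one can extract a deterministic family of size $T=T(\theta,C_\mu)$. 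A purely deterministic variant proceeds by selecting $T$ maximal nets whose ``shifts'' cover the possible offsets between a generic point and the boundary of a pre-cube, the cardinality again being controlled by $C_\mu$ via the geometric doubling of $X$.

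The hardest technical point is to simultaneously guarantee the sandwich property (3) with absolute constants, the coherence of the parent maps so that nesting in (2) persists across all scales, and the existence of $T$ adjacent systems satisfying (4). For the purposes of this paper, it would be natural simply to invoke \cite[Theorem 4.1, Remark 4.13]{HK12}, which packages the whole construction and yields all four properties with $T=T(\theta,C_\mu)$ as asserted; the indexing $\theta^{k+2}<r\leq\theta^{k+1}$ in (4) then arises by choosing the scale at which $r$ is comparable to the side length of the cubes in $\mathcal{D}_t$.
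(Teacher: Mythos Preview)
Your proposal is essentially correct and ends at exactly the right place: the paper does not prove this proposition at all, but simply cites \cite[Theorem 4.1--Remark 4.13]{HK12} (see the sentence immediately preceding the statement). Your sketch of the Christ--Hyt\"onen--Kairema construction is accurate in broad outline but goes well beyond what the paper does; the paper treats the proposition as a black box from \cite{HK12}, which is precisely what you suggest in your final paragraph.
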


To each of the collections $\mathcal{D}_t$ we will refer as a \textit{grid} or \textit{dyadic grid} of $\mathcal{D}.$ And each set $Q_\alpha^{k,t}$ of these collections will be called a \textit{dyadic cube} or a \textit{dyadic set}.

If $Q \in \mathcal{D},$ then $Q=Q_\alpha^{k,t}$ for some $k\in \Z$, $\alpha \in \mathcal{A}_k$, $t\in \lbrace 1,\ldots, T\rbrace.$ We will say that \textit{the generation of} $Q$ is $g(Q)=k,$ and that $z=z_\alpha^{k,t}$ is \textit{the reference point of} $Q=Q_\alpha^{k,t}.$

It may occur that two cubes $Q, Q' \in \mathcal{D}$ belong to different grids $\mathcal{D}_t,$ $\mathcal{D}_{t'}$ and still $Q=Q'$ as sets of points. In this case, $Q$ and $Q'$ are considered different cubes. Whenever we are given a cube $Q \in \mathcal{D},$ we understand that $Q$ is in a fixed grid $\mathcal{D}_t,$ and so its reference point and its generation are taken with respect to the grid $\mathcal{D}_t.$

If $Q_0 \in \mathcal{D},$ then $Q_0\in \mathcal{D}_t$ for some $t,$ and by $\mathcal{D}(Q_0)$ we denote the family of all the cubes $Q$ \textbf{in the same grid} $\mathcal{D}_t$ that are contained in $Q_0$ and such that $g(Q) \geq g(Q_0).$ Thus we can informally say that $\mathcal{D}(Q_0)$ is the collection of all the \textit{dyadic descendants} of $Q_0.$ Also, note that if $Q \in \mathcal{D}(Q_0),$ then $\mathcal{D}(Q) \subset \mathcal{D}(Q_0)$. We include the lower bound $g(Q) \geq g(Q_0)$ in the definition of $\mathcal{D}(Q_0)$, because in certain metric spaces $X$, (e.g., when $X$ is bounded), a dyadic set $Q_0$ may very well coincide with all its ancestors, leading us to generations unbounded from below. By the properties of the dyadic systems we have
$$
Q_0=\bigcup \lbrace  Q \in \mathcal{D}(Q_0) \, : \,  g(Q)=g(Q_0)+l \rbrace \quad \text{for every} \quad l\in \N,
$$
and the union is disjoint, for each $Q_0\in \mathcal{D}.$

\smallskip
It is important to notice that, even when two cubes $Q$, $Q'$ in a common grid $\mathcal{D}_t$ are equal as sets of points, their generations may differ. The generation function is monotonic only when the inclusions between the cubes are proper. This is clarified in the following remark. We also include an additional basic property that we will take into account in several arguments in the paper.

\begin{lemma}\label{observationsdyadicsets}
Let $P,P'\in \mathcal{D}_t$ be cubes in the same grid $\mathcal{D}_t.$ The following holds
\begin{enumerate}
\item[(i)] If $P \subsetneq P',$ then $g(P') < g(P).$
\item[(ii)] If $g(P)=g(P')$ and $P\cap P' \neq \emptyset$, then they are exactly the same cube, meaning that, not only $P=P'$ as sets, but also their labels $(\alpha, g(P))$ and $(\beta, g(P'))$ coincide.
\item[(iii)] If $P \in \mathcal{D}(P')$ and $l=g(P)-g(P') \geq 1,$ then there exists a (unique) chain of cubes $P=Q_0 \subset Q_1 \subset \cdots \subset Q_{l-1} \subset Q_l=P'$ so that $Q_{i-1} \in \mathcal{D}(Q_{i})$ and $g(Q_i)=g(Q_{i-1})-1$ for each $i=1,\ldots,l.$ 
\end{enumerate}
\end{lemma}
\begin{proof}
\item[(i)] If $P=Q_\alpha^{k,t},$ $P'=Q_\beta^{l,t},$ and $l \geq k,$ then either $Q_{\alpha}^{k,t} \cap Q_\beta^{l,t} =\emptyset$ or $Q_\beta^{l,t} \subset Q_\alpha^{k,t}$ by Proposition \ref{theoremexistencedyadicgrids}(2). Because $P \subsetneq P',$ this is a contradiction. Consequently, $l <k.$

\item[(ii)] Let $P=Q_\alpha^{k,t},$ $P'=Q_\beta^{k,t}$. By Proposition \ref{theoremexistencedyadicgrids}, $Q_\alpha^{k,t}$ and $Q_\beta^{k,t}$ coincide as sets of points. Now, if $\alpha \neq \beta,$ then $Q_\beta^{k,t} \subset X \setminus B(z_\alpha^{k,t},  a \theta^k)$; see \cite[Lemma 3.6]{HK12}. This contradicts the inclusion $Q_\alpha^{k,t} \supset B(z_\alpha^{k,t}, a \theta^k).$

\item[(iii)] By the comments subsequent to Proposition \ref{theoremexistencedyadicgrids}, $P'$ can be written as a disjoint union of cubes $Q \in \mathcal{D}(P')$ with $g(Q)=g(P)-1.$ And, by Proposition \ref{theoremexistencedyadicgrids}, the cube $P$ must be contained in one (and only one) of these cubes, say $Q_1.$ If $l=1,$ then $g(Q_1)=g(P') $ and by (ii), $Q_1$ and $P'$ are the same cube, and thus we obtain the desired chain of cubes. If $l>1,$ we write $P'$ as the disjoint union of cubes $Q \in \mathcal{D}(P')=$ with $g(Q)=g(Q_1)-1,$ where one (and only one) of these cubes, say $Q_2$, must contain $Q_1.$ Repeating this procedure, we obtain the chain $P=:Q_0 \subset Q_1 \subset \cdots   \subset Q_{l-1} \subset Q_l:=P',$ with the desired properties.

\end{proof}

The following result gives a quantitative (with power-type rate) estimate for the measure of the $\varepsilon$-boundary of our dyadic cubes, and it is crucial for our purposes. This property was proved by Christ \cite[Theorem 11]{C90} for the corresponding open cubes. Also, Hyt\"onen and Kairema \cite{HK12} stated and proved a probabilistic version for the cubes of Proposition \ref{theoremexistencedyadicgrids}. Because the cubes we are using in this paper are those from \cite{HK12}, which are larger than the ones from \cite{C90}, the desired estimates do not immediately follow from \cite[Theorem 11]{C90}, and so, for the sake of completeness, we provide the proof for all the cubes of the systems.

\begin{proposition}\label{furtherpropertiesofdyadicsets} 

For the cubes of the collection $\mathcal{D},$ the following property holds. There exist constants $C, \eta>0$ depending only on $\theta$ and $C_\mu$ so that
\begin{equation}\label{estimatemeasureboundarydyadicsets}
\mu \left( \lbrace x\in \overline{Q} \: : \: \dist(x, X \setminus Q) \leq \lambda \theta^{g(Q)} \rbrace \right) \leq C  \lambda^{\eta} \mu(Q), \quad \text{for all} \quad  \lambda>0, \, Q\in \mathcal{D}.
\end{equation} 

\end{proposition}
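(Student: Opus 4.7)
The plan is to adapt the classical iterative argument of Christ \cite{C90} to the half-open cubes provided by Proposition \ref{theoremexistencedyadicgrids}. Fix $Q\in\mathcal{D}$, set $k=g(Q)$, and denote
\[
S_\lambda:=\lbrace x\in\overline{Q}:\dist(x,X\setminus Q)\leq \lambda\theta^{k}\rbrace.
\]
For $\lambda$ bounded below by an absolute constant the estimate is trivial via the circumscribed ball of $Q$ and doubling, so I concentrate on small $\lambda$ and seek geometric decay of $\mu(S_\lambda)$ that converts into the claimed power-type bound.

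The core step is a \textit{deep descendant lemma}: there exist an integer $l_0=l_0(\theta,a,A)\geq 1$ and constants $c_0,\tau>0$ depending only on $\theta$ and $C_\mu$ such that every $P\in\mathcal{D}$ admits a subcube $R^*_P\in\mathcal{D}(P)$ with $g(R^*_P)=g(P)+l_0$ satisfying
\[
\dist(R^*_P,X\setminus P)\geq c_0\theta^{g(P)}\qquad\text{and}\qquad\mu(R^*_P)\geq\tau\mu(P).
\]
I would take $R^*_P$ to be the unique subcube at generation $g(P)+l_0$ containing the reference point $z_P$; it lies in $P$ by Proposition~\ref{theoremexistencedyadicgrids}(2). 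Using the inclusions of Proposition~\ref{theoremexistencedyadicgrids}\ref{thm:dyadics.subper}, any $x\in R^*_P$ satisfies $d(x,z_P)\leq 2A\theta^{g(P)+l_0}$; choosing $l_0$ with $2A\theta^{l_0}\leq a/2$ and combining with $B(z_P,a\theta^{g(P)})\subset P$ gives the distance bound with $c_0=a/2$. The measure bound then follows from doubling \eqref{Cmudoublingmeasure} applied to the inclusions $B(z_{R^*_P},a\theta^{g(P)+l_0})\subset R^*_P$ and $P\subset B(z_P,A\theta^{g(P)})$, together with $d(z_P,z_{R^*_P})<A\theta^{g(P)+l_0}$.

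With the lemma in hand I iterate. Set $\mathcal{G}_0=\{Q\}$ and inductively let $\mathcal{G}_{j+1}$ be the family of subcubes $R$ at generation $k+(j+1)l_0$ with $R\subset P$ for some $P\in\mathcal{G}_j$ and $R\neq R^*_P$. Disjointness at each generation together with the measure part of the lemma gives
\[
\mu\Bigl(\bigcup\mathcal{G}_j\Bigr)\leq(1-\tau)^j\mu(Q).
\]
If $x\in S_\lambda\cap Q$ and $(P_j)$ is the chain of subcubes at generations $k+jl_0$ containing $x$, then $P_{j+1}=R^*_{P_j}$ would force $\dist(x,X\setminus Q)\geq\dist(x,X\setminus P_j)\geq c_0\theta^{k+jl_0}$, contradicting $x\in S_\lambda$ whenever $c_0\theta^{jl_0}>\lambda$. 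Setting $J:=\lfloor\log_\theta(\lambda/c_0)/l_0\rfloor$, induction shows $P_j\in\mathcal{G}_j$ for every $j\leq J$, so $S_\lambda\cap Q\subset\bigcup\mathcal{G}_J$ and
\[
\mu(S_\lambda\cap Q)\leq(1-\tau)^J\mu(Q)\leq C\lambda^{\eta}\mu(Q),\qquad \eta:=\frac{\log(1/(1-\tau))}{l_0\log(1/\theta)}>0.
\]
The residual set $\overline{Q}\setminus Q$ can be absorbed by running the same inductive scheme on the closures $\overline{P_j}$, or by a limiting argument as $\lambda\to 0^+$ applied to a slight enlargement.

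The principal technical obstacle is securing the constant $\tau$ in the deep descendant lemma uniformly over $P\in\mathcal{D}$. This is precisely what the doubling condition \eqref{Cmudoublingmeasure} delivers, through the fixed geometric ratio $A/(a\theta^{l_0})$; once the uniform bound is in place the iteration is routine, and the exponent $\eta$ is explicit in terms of $\theta$ and $C_\mu$.
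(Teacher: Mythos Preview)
Your argument is correct and is in fact cleaner than the paper's. Both proofs are Christ-type iterations, but the paper takes a longer route: it first establishes a \emph{qualitative} version (Proposition~\ref{qualitativeestimatemeasureboudaries}) via a rather technical separation lemma of Hyt\"onen--Kairema (Proposition~\ref{chainsjoiningpointsclosetoborder}) about chains of reference points near the boundary, and only then bootstraps to the power-type decay through the iteration on the families $\mathcal{I}_j$. You bypass both of those propositions by observing directly that the single descendant $R^*_P$ through the reference point $z_P$ is already uniformly far from $X\setminus P$ and carries a fixed fraction $\tau$ of the measure; this gives the geometric decay in one stroke. What the paper's approach buys is that at each step it removes \emph{all} subcubes far from the boundary (not just one), which would give a better constant in $\eta$, but for the qualitative statement your version is shorter and uses only the ball inclusions in Proposition~\ref{theoremexistencedyadicgrids}\ref{thm:dyadics.subper} and doubling.

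The one place where you are a bit cavalier is the residual set $\overline{Q}\setminus Q$. Your suggestion to ``run the same inductive scheme on the closures'' is the right fix and works because $\overline{Q}=\bigcup_{(l,\beta)\preceq(k,\alpha)}\overline{Q}_\beta^{\,l}$, so every $x\in\overline{Q}$ lies in some closed subcube $\overline{P_j}$ at each level $k+jl_0$; your distance bound actually gives $\overline{R^*_{P_j}}\subset P_j$ (since $\dist(R^*_{P_j},X\setminus P_j)>0$), so $x\in S_\lambda$ forces $P_{j+1}\neq R^*_{P_j}$ exactly as before, and one concludes $S_\lambda\subset\bigcup_{P\in\mathcal{G}_J}\overline{P}$ with $\sum_{P\in\mathcal{G}_J}\mu(\overline{P})\leq C(C_\mu,\theta,l_0)\sum_{P\in\mathcal{G}_J}\mu(P)$ by doubling. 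The ``limiting argument'' alternative you mention is circular, since $\mu(\overline{Q}\setminus Q)=0$ is a consequence of the proposition, not an input; stick with the closed-cube version.
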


In the proof of the above proposition, we follow the strategy from \cite[Theorem 11]{C90}, adapted to these collections of half-open cubes. Let us briefly describe how the cubes $\lbrace Q_\alpha^k \rbrace_{k,\alpha}$ of a fixed dyadic grid are constructed, as we will employ some of their specific properties. For fixed constants $0 < c_0 \leq C_0< \infty,$ and for every $k\in \Z,$ let $\lbrace z_\alpha^k \rbrace_{\alpha\in \mathcal{A}_k}$ be a set of points in $X$ for which
$$
d(z_\alpha^k,z_\beta^k)\geq c_0 \theta^k \quad \text{if} \quad \alpha \neq \beta, \quad \text{and} \quad \min_\alpha d(x,z_\alpha^k) < C_0 \theta^k \quad \text{for every} \quad x\in X.
$$ 
There exists a partial order $\preceq$ in the set of all labels $ \lbrace (k,\alpha) \, : \, k\in \Z, \, \alpha\in \mathcal{A}_k \rbrace$ with the following properties:
$$
d(z_\beta^{k+1}, z_\alpha^k) < \frac{c_0}{2} \theta^k \implies (k+1,\beta) \preceq (k,\alpha);
$$ 
$$
(k+1,\beta) \preceq (k,\alpha) \implies d(z_\beta^{k+1},z_\alpha^k)<c_0 \theta^k;
$$
$$
\text{for every } (k+1,\beta) \text{ there is a unique } (k,\alpha) \text{ so that } (k+1,\beta) \preceq (k,\alpha);
$$
$$
\text{for every } (k,\alpha) \text{ there are between } 1 \text{ and } M \text{ labels } (k+1,\beta) \text{ so that } (k+1,\beta) \preceq (k,\alpha);
$$
\begin{align*}
  (l,\beta) \preceq (k,\alpha)  \iff &   l \geq k \text{ and exist } \gamma_k=\alpha, \gamma_{k+1}, \ldots, \gamma_{l-1}, \gamma_l =\beta, \\
& \text{with } (j+1, \gamma_{j+1}) \preceq (j,\gamma_j)  \text{ for } j=k,\ldots,l-1.
\end{align*}
Here $M\in \N$ is a constant only depending on the geometric doubling constant of $X,$ and thereby only depending on our constant $C_\mu$ from \eqref{Cmudoublingmeasure}.

The closed and open cubes $\overline{Q}_\alpha^k$, $\widetilde{Q}_\alpha^k$ are defined by
$$
\overline{Q}_\alpha^k= \overline{\bigcup_{(l,\beta) \preceq (k,\alpha)} \lbrace z_\beta^l \rbrace }, \quad \widetilde{Q}_\alpha^k: =X \setminus \bigcup_{\beta \in \mathcal{A}_k,\, \beta \neq \alpha} \overline{Q}_\beta^k.
$$
The cubes $\overline{Q}_\alpha^k$ and $\widetilde{Q}_\alpha^k$ are respectively closed and open subsets of $X.$ The formula defining the \textit{half-open} cubes $Q_\alpha^k$ is more intricate, and we do not recall it here; see \cite[Lemma 2.18]{HK12}. For these three types of cubes, the following properties hold. 
$$
B(z_\alpha^k, a \theta^k) \subset \widetilde{Q}_\alpha^k \subset Q_\alpha^k \subset \overline{Q}_\alpha^k \subset B(z_\alpha^k, A \theta^k), \quad  \text{for all} \quad k\in \Z, \,  \alpha\in \mathcal{A}_k;
$$
$$
 \textrm{int}(\overline{Q}_\alpha^k) = \widetilde{Q}_\alpha^k \quad \text{and} \quad \overline{\widetilde{Q}_\alpha^k} = \overline{Q}_\alpha^k, \quad  \text{for all} \quad k\in \Z, \,  \alpha\in \mathcal{A}_k;
 $$
$$
\overline{Q}_\alpha^k= \bigcup_{\beta\in \mathcal{A}_l, \, (l,\beta) \preceq (k,\alpha)} \overline{Q}_\beta^l,\quad \text{for all} \quad l, k\in \Z, \, l \geq k, \, \alpha\in \mathcal{A}_k;
$$
$$
\widetilde{Q}_\beta^l \subset \widetilde{Q}_\alpha^k  \text{ if }  (l,\beta) \preceq (k,\alpha) \text{ and }    \widetilde{Q}_\beta^l \cap \overline{Q}_\alpha^k= \overline{Q}_\beta^l \cap \widetilde{Q}_\alpha^k = \emptyset \text{ otherwise}; \,   l, k\in \Z, \, l \geq k, \, \alpha\in \mathcal{A}_k;
$$
$$
X= \bigcup_{\alpha\in \mathcal{A}_k} \overline{Q}_{\alpha}^k = \bigcup_{\alpha\in \mathcal{A}_k} Q_{\alpha}^k, \text{ with the latter union disjoint,} \quad \text{for all } k \in \Z.
$$
See \cite{HK12} for a proof of all the above properties. The constants $0<a<A < \infty$ depend on the initial choice of $c_0$ and $C_0,$ but being $\theta$ so small ($\theta < 1/1000$), all these constants can be considered to be absolute.

Notice that, because $\textrm{int}(Q_\alpha^k)=\widetilde{Q}_\alpha^k,$ one can replace the term $\dist(\cdot, X \setminus Q)$ with $\dist(\cdot, X \setminus \widetilde{Q})$ in the estimate \eqref{estimatemeasureboundarydyadicsets}. To prove Proposition \ref{furtherpropertiesofdyadicsets}, we first need a qualitative version of \eqref{estimatemeasureboundarydyadicsets}. The following key proposition corresponds to \cite[Lemma 5.12]{HK12}.

\begin{proposition}\cite[Lemma 5.12]{HK12}\label{chainsjoiningpointsclosetoborder}
If parameters $N\in \N \cup \lbrace 0 \rbrace,$ and $\lambda >0$ are so that $12\lambda \leq c_0 \theta^N,$ and $(k,\alpha)$ is a label so that $x\in \overline{Q}_\alpha^k$ with $\dist(x, X \setminus \widetilde{Q}_\alpha^k)< \lambda \theta^k,$ then for any chain of labels
$$
(k+N,\sigma) =(k+N, \sigma_{k+N}) \preceq \cdots \preceq (k+1, \sigma_{k+1}) \preceq (k,\alpha)
$$
with $x\in \overline{Q}_{\sigma}^{k+N},$ one has $d(z_{\sigma_j}^j, z_{\sigma_i}^i) \geq \varepsilon_1\theta^j$ for all $k \leq j<i \leq k+N;$ where $\varepsilon_1= c_0/12.$ 
\end{proposition}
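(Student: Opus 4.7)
The plan is to exhibit a witness point $y$ lying outside the top cube $\widetilde{Q}_\alpha^k$ but very close to $x$, and then use $y$ as a common pivot to derive the lower bound on $d(z_{\sigma_j}^j, z_{\sigma_i}^i)$ via the triangle inequality. Throughout, I will adopt the convention $\sigma_k := \alpha$, so that the chain $(k+N, \sigma_{k+N}) \preceq \cdots \preceq (k+1, \sigma_{k+1}) \preceq (k, \sigma_k)$ is indexed uniformly.

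\textbf{Step 1 (Witness point).} The hypothesis $\dist(x, X \setminus \widetilde{Q}_\alpha^k) < \lambda \theta^k$ gives some $y \in X \setminus \widetilde{Q}_\alpha^k$ with $d(x,y) < \lambda \theta^k$. By transitivity of $\preceq$ in the given chain, $(j, \sigma_j) \preceq (k, \alpha)$ for every $j \in \{k, \ldots, k+N\}$. The monotonicity $\widetilde{Q}_{\sigma_j}^j \subset \widetilde{Q}_\alpha^k$ (from the list of properties preceding the proposition) then forces $y \notin \widetilde{Q}_{\sigma_j}^j$, and combining with the inclusion $B(z_{\sigma_j}^j, a \theta^j) \subset \widetilde{Q}_{\sigma_j}^j$ gives the key lower bound
\[
d(y, z_{\sigma_j}^j) \geq a \theta^j \quad \text{for every } j \in \{k, \ldots, k+N\}.
\]

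\textbf{Step 2 (Triangle inequality between two chain levels).} For $k \leq j < i \leq k+N$, iterated nesting of the chain yields $x \in \overline{Q}_{\sigma_i}^i \subset B(z_{\sigma_i}^i, A\theta^i)$, hence $d(x, z_{\sigma_i}^i) \leq A\theta^i$. Combined with Step 1,
\[
d(z_{\sigma_j}^j, z_{\sigma_i}^i) \;\geq\; d(y, z_{\sigma_j}^j) - d(y, x) - d(x, z_{\sigma_i}^i) \;\geq\; a\theta^j - \lambda\theta^k - A\theta^i.
\]
The hypothesis $12\lambda \leq c_0 \theta^N$ together with $j \leq k+N$ yields $\lambda\theta^k \leq (c_0/12)\theta^{k+N} \leq (c_0/12)\theta^j$, while $i \geq j+1$ and $\theta < 1/1000$ make $A\theta^i \leq A\theta \cdot \theta^j$ a negligible correction relative to $a\theta^j$. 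Provided the construction of the dyadic systems $\mathcal{D}_t$ is calibrated so that $a - A\theta \geq c_0/6$, we conclude
\[
d(z_{\sigma_j}^j, z_{\sigma_i}^i) \;\geq\; \bigl(a - \tfrac{c_0}{12} - A\theta\bigr)\theta^j \;\geq\; \tfrac{c_0}{12}\theta^j \;=\; \varepsilon_1 \theta^j,
\]
as required.

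\textbf{Main obstacle.} The geometric core of the argument is the triangle-inequality bookkeeping in Step 2, which is straightforward once the witness $y$ is identified. The more delicate part is the constant calibration $a - A\theta \geq c_0/6$: one must verify that the explicit values of the inner-ball radius $a$, outer-ball radius $A$, and center-separation constant $c_0$ chosen in the Christ/Hyt\"onen--Kairema construction, together with the smallness $\theta < 1/1000$, leave enough room to absorb both the boundary-closeness error $\lambda\theta^k \leq (c_0/12)\theta^j$ and the finer-scale error $A\theta^i \leq A\theta \cdot \theta^j$, while still leaving a margin of $c_0/12$. This is arithmetic rather than geometry, but it is the only place where specific properties of the construction (and not merely its abstract nesting) are used. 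The factor $12$ in the hypothesis $12\lambda \leq c_0 \theta^N$ is precisely the slack that makes this calibration work.
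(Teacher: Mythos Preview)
The paper does not supply its own proof of this proposition; it is quoted verbatim from \cite[Lemma~5.12]{HK12} and used as a black box. So there is no in-paper argument to compare against, and your write-up should be judged on its own.

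Your approach is sound. The witness-point idea in Step~1 is the natural move, and the chain of inclusions $x\in\overline{Q}_{\sigma_{k+N}}^{k+N}\subset\overline{Q}_{\sigma_i}^i\subset B(z_{\sigma_i}^i,A\theta^i)$ together with $y\notin\widetilde{Q}_{\sigma_j}^j\supset B(z_{\sigma_j}^j,a\theta^j)$ gives exactly the two-sided control you need. The arithmetic in Step~2 is correct: $\lambda\theta^k\le(c_0/12)\theta^{k+N}\le(c_0/12)\theta^j$ and $A\theta^i\le A\theta\cdot\theta^j$, so the triangle inequality delivers $(a-c_0/12-A\theta)\theta^j$.

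The only loose end is the one you flag yourself: the inequality $a-A\theta\ge c_0/6$. This is not provable from the abstract properties listed in the present paper; it requires the explicit values of $a$ and $A$ from the Hyt\"onen--Kairema construction. In \cite{HK12} one has (with $c_0=C_0$, their standard choice) $a=c_0/3$ and $A=2C_0$, together with a smallness condition on $\theta$ of the form $96C_0\theta\le c_0$, which makes $A\theta\le c_0/48$ and hence $a-A\theta\ge c_0/3-c_0/48>c_0/6$. You should state these explicit values and cite the relevant part of \cite{HK12} rather than leave the calibration as a proviso; otherwise the reader cannot close the argument from what is on the page. Once that is done, your proof is complete and recovers the stated constant $\varepsilon_1=c_0/12$ exactly.
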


\begin{proposition}\label{qualitativeestimatemeasureboudaries}
For every $\varepsilon>0$ there exists $\lambda \in (0,1]$ so that 
$$
\mu\left(\lbrace x\in \overline{Q}_\alpha^k \, : \, \dist(x, X\setminus \widetilde{Q}_\alpha^k) \leq \lambda \theta^k \rbrace \right) < \varepsilon  \mu(Q_\alpha^k), \quad \text{for all} \quad \alpha\in \mathcal{A}_k, \, k \in \Z.
$$
\end{proposition}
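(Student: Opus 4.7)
The plan is to cover the \emph{bad set} $B_\lambda := \lbrace x\in \overline{Q}_\alpha^k : \dist(x, X\setminus \widetilde{Q}_\alpha^k) \leq \lambda \theta^k \rbrace$ by the closed descendants $\overline{Q}_\sigma^{k+N}$ of $\overline{Q}_\alpha^k$ at a sufficiently deep generation $k+N$, and to bound their total measure by a quantity that decays geometrically in $N$. The parameters are fixed in this order: first $N$ is chosen large depending on $\varepsilon$, and then $\lambda$ is chosen small depending on $N$ via $12\lambda \leq c_0 \theta^N$, so that Proposition~\ref{chainsjoiningpointsclosetoborder} is applicable.

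For the covering, pick any $x\in B_\lambda$. Since $\overline{Q}_\alpha^k=\bigcup_{(k+N,\sigma)\preceq(k,\alpha)}\overline{Q}_\sigma^{k+N}$, we have $x\in\overline{Q}_\sigma^{k+N}$ for some such $\sigma$; the chain from $(k+N,\sigma)$ up to $(k,\alpha)$ is unique, and Proposition~\ref{chainsjoiningpointsclosetoborder} forces the consecutive separation
\[
d(z_{\sigma_j}^j, z_{\sigma_{j+1}}^{j+1}) \geq \varepsilon_1 \theta^j, \qquad k \leq j \leq k+N-1.
\]
Let $\mathcal{F}_N$ be the set of labels $\sigma$ with $(k+N,\sigma)\preceq(k,\alpha)$ whose chain satisfies this consecutive separation. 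Then $B_\lambda \subseteq \bigcup_{\sigma\in\mathcal{F}_N}\overline{Q}_\sigma^{k+N}$.

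The heart of the argument is the geometric-decay estimate
\[
\sum_{\sigma\in\mathcal{F}_N}\mu(Q_\sigma^{k+N}) \leq (1-\tau)^N \mu(Q_\alpha^k),
\]
for some $\tau=\tau(\theta,C_\mu)\in(0,1)$, proved by induction on $N$. In the inductive step I exhibit, for each $\sigma\in\mathcal{F}_N$, an always-excluded \emph{center-close} child $\beta_0$: by the covering property of the reference points at level $k+N+1$, there exists $\beta_0$ with $d(z_{\beta_0}^{k+N+1}, z_\sigma^{k+N}) < C_0 \theta^{k+N+1}$; the smallness $\theta<1/1000$, together with the absolute ratio between $C_0$ and $c_0$, forces this distance to be simultaneously less than $(c_0/2)\theta^{k+N}$ and less than $\varepsilon_1 \theta^{k+N}$, so that $(k+N+1,\beta_0)\preceq(k+N,\sigma)$ yet $\beta_0\notin\mathcal{F}_{N+1}$. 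Moreover, doubling applied to the inclusions $B(z_{\beta_0}^{k+N+1}, a\theta^{k+N+1}) \subset Q_{\beta_0}^{k+N+1}$ and $Q_\sigma^{k+N}\subset B(z_\sigma^{k+N}, A\theta^{k+N})$ shows that this excluded child carries a definite proportion of its parent's mass: $\mu(Q_{\beta_0}^{k+N+1}) \geq \tau \mu(Q_\sigma^{k+N})$. Summing over the surviving children of $\sigma$ yields the bound $(1-\tau)\mu(Q_\sigma^{k+N})$, and summing over $\sigma\in\mathcal{F}_N$ closes the induction.

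Finally, I pass from the half-open to the closed cubes via $\mu(\overline{Q}_\sigma^{k+N}) \leq C_1\mu(Q_\sigma^{k+N})$, which follows once more by doubling from the inclusions in Proposition~\ref{theoremexistencedyadicgrids}\ref{thm:dyadics.subper}. Combining the three steps produces $\mu(B_\lambda) \leq C_1(1-\tau)^N \mu(Q_\alpha^k)$, and the proposition is obtained by choosing $N$ so that $C_1(1-\tau)^N<\varepsilon$, and then $\lambda:= \min\lbrace 1, (c_0/12)\theta^N \rbrace$. I expect the main obstacle to be the inductive step: rigorously locating the excluded center-close child and quantifying its measure as a definite fraction of the parent's, which is exactly where the smallness of $\theta$ and the doubling of $\mu$ both play an essential role.
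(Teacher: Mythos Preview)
Your argument is correct and takes a genuinely different route from the paper's. Both proofs begin identically: cover $E_\lambda$ by the level-$(k+N)$ closed descendants of $\overline{Q}_\alpha^k$ whose chains to $(k,\alpha)$ satisfy the separation furnished by Proposition~\ref{chainsjoiningpointsclosetoborder}, with $\lambda$ tied to $N$ via $12\lambda\le c_0\theta^N$. From there the approaches diverge. The paper uses the \emph{full} pairwise separation $d(z_{\sigma_j}^j,z_{\sigma_i}^i)\ge\varepsilon_1\theta^j$ for all $k\le j<i\le k+N$ to show that suitably shrunk balls around \emph{all} reference points along these chains, across \emph{all} generations $j=k,\dots,k+N$, are pairwise disjoint inside $\widetilde{Q}_\alpha^k$; a packing argument then yields $(N+1)\mu(E_\lambda)\le C\mu(Q_\alpha^k)$, i.e.\ polynomial decay in $N$. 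You instead retain only the \emph{consecutive} separation and run an induction on $N$: for each $\sigma\in\mathcal F_N$ the covering property supplies a child $\beta_0$ with $d(z_{\beta_0}^{k+N+1},z_\sigma^{k+N})<C_0\theta^{k+N+1}$, and the smallness $C_0\theta<c_0/12$ simultaneously forces $(k+N+1,\beta_0)\preceq(k+N,\sigma)$ and $\beta_0\notin\mathcal F_{N+1}$; doubling then gives $\mu(Q_{\beta_0}^{k+N+1})\ge\tau\mu(Q_\sigma^{k+N})$, and since the half-open children partition $Q_\sigma^{k+N}$ the surviving mass drops by the factor $(1-\tau)$. Your route thus yields \emph{geometric} decay $\mu(E_\lambda)\le C_1(1-\tau)^N\mu(Q_\alpha^k)$, which is strictly stronger: it delivers the power-type bound $\mu(E_\lambda)\lesssim\lambda^\eta\mu(Q_\alpha^k)$ of Proposition~\ref{furtherpropertiesofdyadicsets} directly, without the separate bootstrapping iteration the paper carries out afterward. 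The paper's packing argument is perhaps conceptually cleaner in that it avoids induction, but your approach is more economical overall.
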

\begin{proof}
Fix a cube $\overline{Q}_\alpha^k$ and denote $E_\lambda=\lbrace x\in \overline{Q}_\alpha^k \, : \, \dist(x, X\setminus \widetilde{Q}_\alpha^k) \leq \lambda \theta^k \rbrace$ for $\lambda>0.$ Given a natural $N\in \N,$ let $\lambda \in (0, \frac{c_0}{12}\theta^N)$, and define, for $x\in E_\lambda$, the set of indices
$$
\mathcal{I}_{k+N}(x) =\lbrace \sigma \in \mathcal{A}_{k+N} \, : \, (k+N, \sigma) \preceq (k,\alpha) \text{ and } x\in \overline{Q}_\sigma^{k+N} \rbrace.
$$ 
Notice that $\mathcal{I}_{k+N}(x)$ is non-empty, since $\overline{Q}_\alpha^k = \bigcup_{ \lbrace \sigma \in \mathcal{A}_{k+N}\, : \, (k+N, \sigma ) \preceq (k,\alpha) \rbrace } \overline{Q}_\sigma^{k+N}.$ Also, for $x\in E_\lambda,$ and $k \leq j \leq k+N,$ we define 
$$
\mathcal{I}_j^\lambda=\lbrace \gamma \in \mathcal{A}_j \, : \, \text{there are } x\in E_\lambda \text{ and } (k+N,\sigma) \preceq \cdots \preceq (j,\gamma) \preceq \cdots \preceq (k,\alpha) \text{ with } \sigma \in \mathcal{I}_{k+N}(x) \rbrace.
$$
We now make the following claim: there exists $0<\varepsilon_2 \leq \min\lbrace a, \varepsilon_1, c_0/2 \rbrace$ so that
\begin{equation}\label{claimdisjointedfamily}
\text{the family of balls } \: \lbrace B(z_\gamma^j, \varepsilon_2 \theta^j) \,  : \, \gamma \in \mathcal{I}_j^\lambda, \: k \leq j \leq k+N \rbrace \:\ \text{ is disjointed}.
\end{equation}
 
Indeed, for every two different indices $\gamma \in \mathcal{I}_j^\lambda,$ $\beta \in \mathcal{I}_i^\lambda$ with $k \leq j \leq i \leq k+N,$ let us show that  $B(z_\gamma^j, \varepsilon_2 \theta^j) $ and $B(z_\beta^i, \varepsilon_2 \theta^j)$ are disjoint, for a constant $\varepsilon_2>0$. In the case $i=j,$ because $\gamma \neq \beta,$ the points are $z_\gamma^j$ and $z_\beta^j$ are $c_0 \theta^j$-separated, and the claim follows if $\varepsilon_2 \leq c_0/2.$ Now, if $i>j,$ let us denote $\gamma_j= \gamma$ and $\beta_i=\beta.$ Let $x,y\in E_\lambda$ be so that $(j,\gamma_j)$ belongs to a chain $(k+N,\gamma_{k+N}) \preceq \cdots \preceq (k,\gamma_N)=(k,\alpha)$ with $\gamma_{k+N}\in \mathcal{I}_{k+N}(x)$, and so that $(i, \beta_i)$ belongs to a chain $(k+N,\beta_{k+N}) \preceq \cdots \preceq (k,\beta_N)=(k,\alpha)$ with $\beta_{k+N}\in \mathcal{I}_{k+N}(y).$ Because $x\in E_\lambda \cap \overline{Q}_{\gamma_{k+N}}^{k+N},$ $y\in E_\lambda \cap \overline{Q}_{\beta_{k+N}}^{k+N}$ and $\lambda< \frac{c_0 \theta^N}{12},$ Proposition \ref{chainsjoiningpointsclosetoborder} can be applied to obtain
$$
d(z_{\gamma_j}^j, z_{\gamma_i}^i) \geq \varepsilon_1 \theta^j \quad \text{ and } \quad d(z_{\beta_j}^j, z_{\beta_i}^i) \geq \varepsilon_1 \theta^j.
$$
If $\beta_j = \gamma_j$, then we immediately get $d(z_{\gamma_j}^j, z_{\beta_i}^i) =  d(z_{\beta_j}^j, z_{\beta_i}^i) \geq \varepsilon_1 \theta^j.$ In the other case, $\beta_j \neq \gamma_j$, and since $(i,\beta_i) \preceq (j,\beta_j),$ then $(i,\beta_i) \not\preceq (j,\gamma_j)$. The properties of the cubes $\widetilde{Q}, \overline{Q}$ imply $\widetilde{Q}_{\beta_i}^i \cap \overline{Q}_{\gamma_j}^j = \emptyset,$ and so the balls $B(z_{\beta_i}^i,a \theta^i)$ and $B(z_{\gamma_j}^j,a \theta^j)$ are disjoint too. In either case we see that there is $\varepsilon_2>0$ so that $B(z_\beta^i, \varepsilon_2 \theta^j) \cap B(z_\gamma^j, \varepsilon_2 \theta^j) = \emptyset,$ proving claim \eqref{claimdisjointedfamily}.

Before starting with the desired estimates for the measures, observe that $\mathcal{I}_{k+N}^\lambda$ can be written as
$$
\mathcal{I}_{k+N}^\lambda=\lbrace \sigma \in \mathcal{A}_{k+N} \, : \, (k+N, \sigma) \preceq (k,\alpha) \text{ and } x\in \overline{Q}_\sigma^{k+N} \text{ for some } x\in E_\lambda \rbrace,
$$
and thus
$$
E_\lambda \subset \bigcup_{\sigma \in \mathcal{I}_{k+N}^\lambda} \overline{Q}_\sigma^{k+N} \subset \bigcup_{\sigma \in \mathcal{I}_{k+N}^\lambda} B(z_\sigma^{k+N}, A\theta^{k+N}).
$$
In addition, $\mathcal{I}_{k+N}^\lambda \subset \bigcup_{\gamma \in \mathcal{I}_j^\lambda} \lbrace \sigma \in \mathcal{A}_{k+N} \, : \, (k+N,\sigma) \preceq (j,\gamma) \rbrace$ for every $k \leq j \leq k+N.$ We will also take into account that for every $k \leq l \leq k+N,$ and $\beta \in \mathcal{I}_l^\lambda,$ the ball $B(z_\beta^l, \varepsilon_2 \theta^l)$ is contained in $\widetilde{Q}_\beta^l$, and thereby contained in $\widetilde{Q}_\gamma^j$ whenever $(l,\beta) \preceq (j,\gamma).$ All these observations lead us to 
\begin{align*}
\mu(E_\lambda) & \leq   \sum_{\sigma \in \mathcal{I}_{k+N}^\lambda} \mu\left( B(z_\sigma^{k+N}, A \theta^{k+N}) \right) \leq C(C_\mu,A,\theta,\varepsilon_2) \sum_{\sigma \in \mathcal{I}_{k+N}^\lambda} \mu\left( B(z_\sigma^{k+N}, \varepsilon_2 \theta^{k+N}) \right) \\
& \leq C \sum_{\gamma\in \mathcal{I}_j^\lambda}\sum_{\substack{\sigma \in \mathcal{A}_{k+N} \\ (k+N,\sigma)\preceq (j,\gamma)}} \mu\left( B(z_\sigma^{k+N}, \varepsilon_2 \theta^{k+N}) \right) \leq C \sum_{\gamma\in \mathcal{I}_j^\lambda} \mu( \widetilde{Q}_\gamma^j ) \\
& \leq C \sum_{\gamma\in \mathcal{I}_j^\lambda} \mu( B(z_\gamma^j, A \theta^j) ) \leq C^2 \sum_{\gamma\in \mathcal{I}_j^\lambda} \mu( B(z_\gamma^j, \varepsilon_2 \theta^j) ).
\end{align*}
These estimates, along with \eqref{claimdisjointedfamily}, permit to write
$$
(N +1) \mu(E_\lambda) \leq C^2 \sum_{j=k}^{k+N} \sum_{\gamma\in \mathcal{I}_j^\lambda} \mu( B(z_\gamma^j, \varepsilon_2 \theta^j) ) = C^2 \mu \left(  \bigcup_{k \leq j \leq k+N} \bigcup_{\gamma\in \mathcal{I}_j^\lambda} B(z_\gamma^j, \varepsilon_2 \theta^j) \right) \leq C^2 \mu(Q_\alpha^k),
 $$
 which holds for every $\alpha,k$ whenever $\lambda \in (0, \frac{c_0}{12} \theta^N).$ In the last inequality we have used the fact that all the balls $B(z_\gamma^j, \varepsilon_2 \theta^j) $ are contained in $\widetilde{Q}_\gamma^j$, and so, in $\widetilde{Q}_\alpha^k$ as well because $(j,\gamma) \preceq (k,\alpha)$. 
\end{proof}

Using the technique from \cite{C90}, where a version of Proposition \ref{qualitativeestimatemeasureboudaries} for the open cubes is combined with an iterative argument, we next show Proposition \ref{furtherpropertiesofdyadicsets}.

\begin{proof}[Proof of Proposition \ref{furtherpropertiesofdyadicsets}] Denote $E_\lambda(\overline{Q}_\alpha^k)= \lbrace x\in \overline{Q}_\alpha^k \, : \, \dist(x, X\setminus \widetilde{Q}_\alpha^k) \leq \lambda \theta^k \rbrace.$ The desired estimate in the case $\lambda \geq \theta$ is true because in this case we can simply estimate by
$$
\mu(E_\lambda(\overline{Q}_\alpha^k)) \leq \mu(\overline{Q}_\alpha^k) \leq C(C_\mu) \mu(Q_\alpha^k) \leq C(C_\mu) \theta^{-\eta} \lambda^\eta\mu(Q_\alpha^k),
$$
using any $\eta>0.$ 
So let us assume $\lambda \in (0,\theta)$ in the rest of the proof. 
Define for every $j \in \N\cup \lbrace 0 \rbrace,$ the set of labels
$$
\mathcal{I}_j(\overline{Q}_\alpha^k) := \lbrace (k+j,\beta) \preceq (k,\alpha) \, : \, \dist( \overline{Q}_\beta^{k+j}, X \setminus \widetilde{Q}_\alpha^k) \leq   \theta^{k+j} \rbrace,
$$
together with the corresponding sets of points $S_j(\overline{Q}_\alpha^k) :=  \bigcup_{(k+j,\beta)\in \mathcal{I}_j(\overline{Q}_\alpha^k)} \overline{Q}_\beta^{k+j}.$ We claim the following.

\begin{claim}\label{claimepsilonboundariesforSsets}
There exist constants $C, \eta >0$ depending on $\theta$ and $C_\mu$ so that $\mu(S_j(\overline{Q}_\alpha^k)) \leq C \theta^{j \eta} \mu(Q_\alpha^k)$ for every $j\in \N,$ and every $\alpha, k.$ 
\end{claim}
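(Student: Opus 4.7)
The plan is to first establish the claim along the geometric sub-sequence $j = mN$, where $N \in \mathbb{N}$ is a fixed large integer chosen momentarily, by a dyadic iteration based on Proposition \ref{qualitativeestimatemeasureboudaries}; the general bound will then follow from a simple monotonicity property of the sets $S_j$. To set up the base of the iteration, I will apply Proposition \ref{qualitativeestimatemeasureboudaries} with $\varepsilon_0 = \tfrac{1}{2}$ to obtain $\lambda_0 \in (0,1]$ such that $\mu(E_{\lambda_0}(\overline{Q})) \leq \varepsilon_0\, \mu(Q)$ for every cube of the system. Then I fix $N$ large enough that $(2A+1)\theta^N \leq \lambda_0$. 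For any $Q_0 = \overline{Q}_\alpha^k$, any label $(k+N,\sigma) \in \mathcal{I}_N(Q_0)$, and any $x \in \overline{Q}_\sigma^{k+N}$, the inclusion $\overline{Q}_\sigma^{k+N} \subset B(z_\sigma^{k+N}, A\theta^{k+N})$ gives
$$
\dist(x, X \setminus \widetilde{Q}_\alpha^k) \leq \diam(\overline{Q}_\sigma^{k+N}) + \dist(\overline{Q}_\sigma^{k+N}, X \setminus \widetilde{Q}_\alpha^k) \leq (2A+1)\theta^{k+N} \leq \lambda_0 \theta^k,
$$
so $S_N(\overline{Q}_\alpha^k) \subset E_{\lambda_0}(\overline{Q}_\alpha^k)$ and hence $\mu(S_N(\overline{Q}_\alpha^k)) \leq \varepsilon_0 \mu(Q_\alpha^k)$.

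Next, I will prove by induction on $m$ the bound $\mu(S_{mN}(\overline{Q}_\alpha^k)) \leq C\varepsilon_0^m \mu(Q_\alpha^k)$, with $C$ depending only on $\theta$ and $C_\mu$ (and absorbing the ratio $\mu(\overline{Q})/\mu(Q)$ coming from doubling). The crux of the induction is the inclusion
$$
S_{mN}(\overline{Q}_\alpha^k) \cap P \subset S_{(m-1)N}(P) \qquad \text{for every } P \in \mathcal{I}_N(\overline{Q}_\alpha^k).
$$
To see this, note first that any $\overline{Q}_\beta^{k+mN}$ contributing to $S_{mN}(\overline{Q}_\alpha^k)$ has its unique generation-$(k+N)$ ancestor in $\mathcal{I}_N(\overline{Q}_\alpha^k)$, since proximity to $X \setminus \widetilde{Q}_\alpha^k$ propagates upward through ancestors by inclusion; and the property $\widetilde{P} \subset \widetilde{Q}_\alpha^k$ from Proposition \ref{theoremexistencedyadicgrids} yields
$$
\dist(\overline{Q}_\beta^{k+mN}, X \setminus \widetilde{P}) \leq \dist(\overline{Q}_\beta^{k+mN}, X \setminus \widetilde{Q}_\alpha^k) \leq \theta^{k+mN} = \theta^{g(P)+(m-1)N},
$$
placing $\overline{Q}_\beta^{k+mN}$ inside $S_{(m-1)N}(P)$. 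Summing over the essentially disjoint family $\{P \in \mathcal{I}_N(\overline{Q}_\alpha^k)\}$ (whose open counterparts $\widetilde{P}$ are genuinely disjoint, and which have measures comparable to those of $\overline{P}$ by doubling) and applying the inductive hypothesis then gives
$$
\mu(S_{mN}(\overline{Q}_\alpha^k)) \leq \sum_P \mu(S_{(m-1)N}(P)) \leq C\varepsilon_0^{m-1}\sum_P \mu(P) \leq C\varepsilon_0^{m-1}\mu(S_N(\overline{Q}_\alpha^k)) \leq C\varepsilon_0^m \mu(Q_\alpha^k).
$$

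Finally, to pass to arbitrary $j$ I will use the monotonicity $S_j(\overline{Q}_\alpha^k) \subset S_i(\overline{Q}_\alpha^k)$ whenever $i \leq j$: if $\overline{Q}_\beta^{k+j} \in \mathcal{I}_j(\overline{Q}_\alpha^k)$, its generation-$(k+i)$ ancestor $\overline{Q}_\sigma^{k+i}$ satisfies $\dist(\overline{Q}_\sigma^{k+i}, X \setminus \widetilde{Q}_\alpha^k) \leq \theta^{k+j} \leq \theta^{k+i}$, hence belongs to $\mathcal{I}_i(\overline{Q}_\alpha^k)$. Writing $j = mN + r$ with $0 \leq r < N$, the estimate for $mN$ combined with this monotonicity gives $\mu(S_j(\overline{Q}_\alpha^k)) \leq \mu(S_{mN}(\overline{Q}_\alpha^k)) \leq C\varepsilon_0^m \mu(Q_\alpha^k)$; choosing $\eta = \log(\varepsilon_0^{-1})/(N\log(\theta^{-1})) > 0$ converts this into the claimed $C'\theta^{j\eta}\mu(Q_\alpha^k)$, the constant $C'$ absorbing the at most $N$ extra generations $r$. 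The main obstacle in the argument is the bookkeeping in the iteration step, and in particular verifying the key inclusion $S_{mN}(\overline{Q}_\alpha^k)\cap P \subset S_{(m-1)N}(P)$, which crucially exploits the nesting $\widetilde{P} \subset \widetilde{Q}_\alpha^k$ to transfer boundary proximity from $\overline{Q}_\alpha^k$ down to each sub-cube $P$.
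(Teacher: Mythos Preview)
Your proof is correct and follows essentially the same strategy as the paper: fix an integer $N$ (the paper's $J$) via Proposition~\ref{qualitativeestimatemeasureboudaries} so that $\mu(S_N(\overline{Q}))\le \tfrac12\mu(Q)$ for every cube, iterate on the sub-cubes at generation $k+N$ to obtain geometric decay along $j=mN$, and then fill in arbitrary $j$ by the monotonicity $S_j\subset S_{mN}$ for $mN\le j$. The paper packages the iteration through the auxiliary families $\widehat{\mathcal{I}}_n$ and $\widehat{S}_n$ and proves $\mathcal{I}_{nJ}\subset\widehat{\mathcal{I}}_n$; your key inclusion $S_{mN}(\overline{Q}_\alpha^k)\subset\bigcup_{P}S_{(m-1)N}(P)$ (with $P$ ranging over $\mathcal{I}_N(\overline{Q}_\alpha^k)$) is exactly the content of that inductive step, just expressed without the extra notation.

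One minor point: your parenthetical about the family $\{P\}$ being ``essentially disjoint'' with ``measures comparable to those of $\overline{P}$ by doubling'' is unnecessary and, if taken literally, would introduce a multiplicative constant at each stage of the induction. The clean way---which the paper also uses---is to apply the inductive hypothesis in the form $\mu(S_{(m-1)N}(\overline{P}))\le C\varepsilon_0^{m-1}\mu(P)$ with the \emph{half-open} cube $P=Q_\gamma^{k+N}$ on the right, and then use the genuine disjointness of the half-open cubes to get $\sum_P\mu(P)=\mu\bigl(\bigcup_P P\bigr)\le\mu(S_N(\overline{Q}_\alpha^k))$. With this bookkeeping the constant $C$ does not compound (in fact $C=1$ works along the subsequence $mN$).
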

Before proving this claim, let us explain why this would imply Proposition \ref{furtherpropertiesofdyadicsets}. Assume for the moment that Claim \ref{claimepsilonboundariesforSsets} is true. For any $\lambda \in (0,\theta)$, let $j \in \N \cup \lbrace 0 \rbrace$ be so that $ \theta^{j+1} <\lambda \leq  \theta^j.$ If $x\in \overline{Q}_\alpha^k$ is so that $\dist(x, X \setminus \widetilde{Q}_\alpha^k) \leq \lambda \theta^k,$ and $\beta$ so that $(k+j,\beta) \preceq (k,\alpha)$ and $x\in \overline{Q}_\beta^{k+j},$ then $(k+j,\beta) \in \mathcal{I}_j(\overline{Q}_\alpha^k),$ showing that $E_\lambda(\overline{Q}_\alpha^k) \subset S_j(\overline{Q}_\alpha^k).$ This yields 
$$
\mu(E_\lambda(\overline{Q}_\alpha^k)) \leq \mu( S_j(\overline{Q}_\alpha^k)) \leq C \theta^{j\eta}\mu(Q_\alpha^k) \leq \theta^{-\eta} C \lambda^\eta \mu(Q_\alpha^k).
$$
Let us now prove Claim \ref{claimepsilonboundariesforSsets}. The triangle inequality and the fact that $\overline{Q}_\gamma^l \subset B(z_\gamma^l, A \theta^l)$ for all $(l,\gamma)$ give
$$
\mu( S_j(\overline{Q}_\alpha^k)) \leq \mu \left( \lbrace x\in \overline{Q}_\alpha^k \, : \, \dist(x, X\setminus \widetilde{Q}_\alpha^k) \leq (2A+1) \theta^j \theta^{k} \rbrace \right).
$$
Hence, according to Proposition \ref{qualitativeestimatemeasureboudaries}, we can find $J\in \N$ with the property that
\begin{equation}\label{measureestimateforJfixednatural}
\mu( S_J(\overline{Q}_\alpha^k) ) \leq \frac{1}{2} \mu(Q_\alpha^k), \quad \text{for all }  \,  \alpha \in \mathcal{A}_k,\,  k \in \Z.
\end{equation}
We define new subfamilies of labels obtained from the previous $(\mathcal{I}_j)_j:$ 
$$
\widehat{\mathcal{I}}_1(\overline{Q}_\alpha^k):=\mathcal{I}_J(\overline{Q}_\alpha^k), \quad \widehat{\mathcal{I}}_{n+1}(\overline{Q}_\alpha^k):= \bigcup_{(k+nJ,\beta) \in \widehat{\mathcal{I}}_n(\overline{Q}_\alpha^k)} \mathcal{I}_J(\overline{Q}_\beta^{k+nJ}), \quad n \geq 1.
$$
The corresponding sets of points are defined as $ \widehat{S}_n(\overline{Q}_\alpha^k) :=  \bigcup_{(k+nJ,\beta)\in \widehat{\mathcal{I}}_n(\overline{Q}_\alpha^k)} \overline{Q}_\beta^{k+nJ}.$ 

Let us prove by induction on $n$ that $\mathcal{I}_{nJ}(\overline{Q}_\alpha^k) \subset \widehat{\mathcal{I}}_n(\overline{Q}_\alpha^k).$ If $n=1, $ the inclusion is actually an identity and it follows from the definitions. Now assume that the inclusion holds for some natural $n$ and let us show the corresponding inclusion for $n+1.$ Given $(k+(n+1)J, \beta) \in \mathcal{I}_{(n+1)J}(\overline{Q}_\alpha^k)$ there exists (a unique) $(k+nJ, \gamma)$ with $(k+(n+1)J, \beta) \preceq (k+nJ, \gamma) \preceq (k,\alpha).$ The index $(k+(n+1)J, \beta)$ belongs to $\mathcal{I}_J(\overline{Q}_\gamma^{k+nJ}) $, because $\widetilde{Q}_\gamma^{k+nJ} \subset \widetilde{Q}_\alpha^k$ and
$$
\dist(\overline{Q}_\beta^{k+(n+1)J}, X \setminus \widetilde{Q}_\gamma^{k+nJ}) \leq \dist(\overline{Q}_\beta^{k+(n+1)J}, X \setminus \widetilde{Q}_\alpha^{k}) \leq  \theta^{k+(n+1)J}=\theta^{(k+nJ)+J}.
$$
On the other hand, $\overline{Q}_\beta^{k+(n+1)J} \subset \overline{Q}_\gamma^{k+nJ}$ and then clearly
$$
\dist(\overline{Q}_\gamma^{k+nJ}, X \setminus \widetilde{Q}_\alpha^{k}) \leq \dist(\overline{Q}_\beta^{k+(n+1)J}, X \setminus \widetilde{Q}_\alpha^{k}) \leq  \theta^{k+(n+1)J} \leq  \theta^{k+nJ}.
$$
This tells us that $(k+nJ,\gamma)  \in \mathcal{I}_{nJ}(\overline{Q}_\alpha^k),$ and therefore $(k+nJ,\gamma) \in \widehat{\mathcal{I}}_n(\overline{Q}_\alpha^k)$ by the induction hypothesis. Together with the already proven fact that $(k+(n+1)J, \beta)\in \mathcal{I}_J(\overline{Q}_\gamma^{k+nJ}), $ we may conclude $(k+(n+1)J, \beta) \in \widehat{\mathcal{I}}_{n+1}(\overline{Q}_\alpha^k).$

We summarize the key inclusions here:
\begin{equation}\label{inclusionnewsetsofindices}
\mathcal{I}_{nJ}(\overline{Q}_\alpha^k) \subset \widehat{\mathcal{I}}_n(\overline{Q}_\alpha^k), \quad S_{nJ}(\overline{Q}_\alpha^k) \subset \widehat{S}_{n}(\overline{Q}_\alpha^k), \quad n \geq 2. 
\end{equation}
Observing that $\widehat{S}_{n}(\overline{Q}_\alpha^k) \subset \bigcup_{(k+(n-1)J,\beta) \in \widehat{\mathcal{I}}_{n-1}(\overline{Q}_\alpha^k)} S_J(\overline{Q}_\beta^{k+(n-1)J})$ for $n\geq 2,$ and employing \eqref{measureestimateforJfixednatural}, we can write
\begin{align*}
\mu\left( \widehat{S}_{n}(\overline{Q}_\alpha^k) \right) & \leq \sum_{(k+(n-1)J,\beta) \in \widehat{\mathcal{I}}_{n-1}(\overline{Q}_\alpha^k)} \mu(S_J(\overline{Q}_\beta^{k+(n-1)J})) \leq \frac{1}{2} \sum_{(k+(n-1)J,\beta) \in \widehat{\mathcal{I}}_{n-1}(\overline{Q}_\alpha^k)} \mu( Q_\beta^{k+(n-1)J}) \\
& = \frac{1}{2} \mu  \Big (   \bigcup_{(k+(n-1)J,\beta) \in \widehat{\mathcal{I}}_{n-1}(\overline{Q}_\alpha^k)}  Q_\beta^{k+(n-1)J} \Big ) \leq \frac{1}{2}\mu(\widehat{S}_{n-1}(\overline{Q}_\alpha^k))  \\
& \leq \cdots \leq \frac{1}{2^{n-1}} \mu(\widehat{S}_1(\overline{Q}_\alpha^k)) =\frac{1}{2^{n-1}} \mu(S_J( \overline{Q}_\alpha^k)) \leq \frac{1}{2^{n}} \mu( Q_\alpha^k), \quad \text{for every} \quad n \geq 1.
\end{align*}
Then \eqref{inclusionnewsetsofindices} leads us to $\mu\left( S_{nJ}(\overline{Q}_\alpha^k) \right) \leq \mu\left( \widehat{S}_{n}(\overline{Q}_\alpha^k) \right) \leq 2^{-n} \mu(Q_\alpha^k)$ for each $n\in \N.$ Taking $\eta>0$ so that $\theta^{J \eta}=1/2,$ we obtain Claim \ref{claimepsilonboundariesforSsets} when $j$ is a multiple of $J,$ and with the constant $C=1.$ When $j\in \N$ is arbitrary with $j \geq J,$ we find $m\in \N$ with $mJ \leq j < (m+1)J$ and observe that $S_j(\overline{Q}_\alpha^k) \subset S_{mJ}(\overline{Q}_\alpha^k).$ Then the result for the multiples of $J$ gives the estimate $\mu( S_j(\overline{Q}_\alpha^k)) \leq (\theta^{mJ})^\eta\mu(Q_\alpha^k),$ and this term is bounded above by $\theta^{-J\eta} (\theta^j)^\eta \mu(Q_\alpha^k).$ And when $j <J,$ we simply write
$$
\mu(S_j(\overline{Q}_\alpha^k)) \leq \mu(\overline{Q}_\alpha^k) \leq C(C_\mu) \mu(Q_\alpha^k) \leq C(C_\mu) \theta^{-J\eta} \theta^{j\eta} \mu(Q_\alpha^k).
$$
This completes the proof of Claim \ref{claimepsilonboundariesforSsets}, and hence Proposition \ref{furtherpropertiesofdyadicsets}.
\end{proof}

\section{Weakly porous sets and dyadic weakly porous sets}\label{sectiondefinitionswporosity}

We begin by defining the maximal $E$-free hole for a ball $B=B(x,R)\subset X$:
\begin{equation}\label{definitionmaxholeballs}
h_E(B ) := \sup \lbrace 0<r \leq 2R \: : \: \text{there exists } y\in X \text{ with } B(y,r) \subset B(x,R) \setminus E \rbrace.
\end{equation}
If for a ball $B=B(x,R)$ there is no ball $B(y,r) \subset B(x,R) \setminus E,$ we understand $h_E(B)=0.$ Notice that, even if two balls $B(x,R)$ and $B(x',R')$ define the same set of points, their maximal $E$-free holes $h_E(B(x,R))$ and $h_E(B(x',R'))$ may be distinct.

The upper bound $r \leq 2R$ for the radii in the supremum prevents $h_E(B)$ from being equal to $\infty,$ which may otherwise occur, e.g., when $X$ is a bounded space. This bound is also motivated by the fact that if two balls satisfy $B(y,r) \subset B(x,R)$ for $r $ arbitrary, then the ball $B(y,t)$, with $t=\min\lbrace r,2R \rbrace$, defines the same set of points as $B(y,r)$, and has radius $t$ bounded above by $2R.$

Let us now give the definition of a weakly porous set.

\begin{definition}[Weak porosity]\label{definitionweakporballs}
We say that a set $E \subset X$ is \textit{weakly porous} if there are two constants $c, \delta \in (0,1)$ such that for every open ball $B=B(x,R) \subset X$ there exists a finite collection of mutually disjoint open balls $\lbrace B_i=B(x_i,r_i) \rbrace_{i=1}^N $ contained in $B\setminus E$, with:
\begin{enumerate}[label=(\alph*)]
\item \label{weakporradupperbound} $r_i \leq 2R$ for every $i=1,\ldots,N$; 
\item \label{weakporradlowerbound} $r_i \geq \delta h_E(B)$ for every $i=1,\ldots,N$;
\item \label{weakporcomparablemeasures} $ \sum_{i=1}^N \mu(B_i) \geq c \mu(B). $
\end{enumerate}
\end{definition}

We immediately observe that if $E$ is weakly porous, then $h_E(B)>0$ for every ball $B \subset X.$ Moreover, as a consequence of the Lebesgue differentiation theorem, every weakly porous set $E$ satisfies $\mu(E)=0.$ We also notice that, since the balls we consider are open sets, a set $E$ is weakly porous if and only if its closure $\overline{E}$ is weakly porous, and with the exact same constants $c,\delta.$

It is worth comparing with the (much stronger) classical porous sets: a set $E$ is porous if there is a constant $c \in (0,1)$ so that every ball $B(x,R)$ contains another ball $B'$ with radius at least $cR$ and $B' \cap E = \emptyset.$ Then, by the doubling property of the measure \eqref{Cmudoublingmeasure}, this implies that every porous set is weakly porous. The converse is false; it suffices to equip $X=\R$ with the usual metric and the Lebesgue measure, and consider $E= \mathbb{Z}$ or $E= \lbrace \pm j^{-\gamma} \, : \, j\in \N \rbrace \cup \lbrace 0 \rbrace$, $\gamma>0.$ A detailed study of the $A_1$ properties of distances to these sets can be found in \cite[Theorem 7.1]{ALMV22}.

\begin{remark}
Definition \ref{definitionweakporballs} of weak porosity can be re-formulated in terms of admissible $E$-free balls that are not necessarily mutually disjoint: there are two constants $c, \delta \in (0,1)$ such that for every open ball $B=B(x,R) \subset X$ there exists a finite collection of open balls $\lbrace B_i=B(x_i,r_i) \rbrace_{i=1}^N $ contained in $B\setminus E$, with:
\begin{enumerate} 
\item[(a')] $r_i \leq 2R$ for every $i=1,\ldots,N$; 
\item[(b')] $r_i \geq \delta h_E(B)$ for every $i=1,\ldots,N$;
\item[(c')] $ \mu\left( \bigcup_{i=1}^N B_i \right) \geq c \mu(B). $
\end{enumerate}
Indeed, if $E \subset X,$ $c,\delta \in (0,1)$, $B$ and $\lbrace B_i \rbrace_i$ are as above, the radii of the balls $\lbrace B_i \rbrace_i$ are uniformly bounded by $2R$ and the $5$-covering lemma (see e.g., \cite[p. 5]{BBbook}) can be applied to obtain a subcollection $\lbrace B_j \rbrace_j$ of $\lbrace B_i \rbrace_i$ of mutually disjoint balls with $\bigcup_i B_i \subset \bigcup_j 5 B_j$. Using that $\mu(5B_j) \leq C_\mu^3 \, \mu(B_j),$ it is easy to see that the family $\lbrace B_j \rbrace_j$ satisfies the properties \ref{weakporradupperbound}, \ref{weakporradlowerbound}, \ref{weakporcomparablemeasures} of Definition \ref{definitionweakporballs} with constants $(C_\mu^{-3} c, \delta)$ instead of $(c,\delta).$ 
 
\end{remark}

For the reasons explained in the introduction, we also need a formulation of weak porosity in terms of the dyadic structures from Section \ref{sectionpreliminaries}. From now on, we fix a parameter $0<\theta<1/1000$ and consider the associated dyadic systems $\mathcal{D}=\lbrace \mathcal{D}_t \rbrace_{t=1}^T$ from Proposition \ref{theoremexistencedyadicgrids}.

Given a cube $Q_0$ in a dyadic grid $\mathcal{D}_t,$ the dyadic version of the maximal $E$-free hole for $Q_0$ is defined by the formula
\begin{equation}\label{definitiondyadicmaximalhole}
g_E(Q_0)=\min\lbrace g(Q) \: : \: Q \in \mathcal{D}(Q_0),\:    Q \cap E =\emptyset \rbrace.
\end{equation}
In the trivial case where every $Q\in \mathcal{D}(Q_0)$ intersects $E,$ we understand $g_E(Q_0)=+\infty,$ and $\theta^{g_E(Q_0)}=0.$ Note that $g_E(Q_0) \geq g(Q_0)$ for every $Q_0.$ Equipped with this definition, we introduce the dyadic weakly porous sets in a metric measure space $(X,d,\mu).$

\begin{definition}[Dyadic weak porosity]\label{definitiondyadicweakporosity}
We say that a set $E \subset X$ is \textit{dyadic weakly porous} if there exist constants $c  \in (0,1)$ and $M\in \N$ such that for every cube $Q_0 \in \mathcal{D}$ of $X$, there exists a finite collection of mutually disjoint cubes $Q_1, \ldots, Q_N \in \mathcal{D}(Q_0)$ that are contained in $Q_0\setminus E$, with $g(Q_i) \leq M + g_E(Q_0)$ for every $i=1,\ldots,N$, and $\sum_{i=1}^N \mu(Q_i) \geq c \mu(Q_0)$.
\end{definition}

We next define the doubling properties for the size of maximal $E$-free holes, both in terms of balls and dyadic sets.

\begin{definition}\label{definitionofdoubling}
Let $E \subset X$ be a subset. 
\begin{enumerate}
\item[$\bullet$] We say that \textit{$h_E$ is doubling} if there exists $C>0$ for which
$$
h_E(B(x,2R)) \leq C h_E(B(x,R)), \quad \text{for all} \quad x\in X, \, R>0.
$$
\item[$\bullet$] We say that \textit{$g_E$ is doubling} if there exists $m\in \N$ for which
$$
g_E(Q) \leq m + g_E(Q^*), \quad \text{whenever } \: Q \in \mathcal{D}(Q^*), \, g(Q)=1+g(Q^*), \, Q^* \in \mathcal{D}.
$$
\end{enumerate}

\end{definition}

First notice that $h_E=h_{\overline{E}}$ since the balls we consider are all open.

Also observe that if $g_E$ is doubling with constant $m\in \N$, then for every two cubes $Q, Q^*$ with $Q \in \mathcal{D}(Q^*),$ we have, as a consequence of Lemma \ref{observationsdyadicsets}(iii),
\begin{equation}\label{generalizedgEdoubling}
g_E(Q) \leq m\left( g(Q)-g(Q^*) \right) + g_E(Q^*).
\end{equation}

The rest of this section is devoted to proving that the two formulations of weak porosity and doubling conditions are equivalent. More precisely, a weakly porous set $E$ whose maximal hole function $h_E$ is doubling is dyadic weakly porous with $g_E$ doubling as well, and vice-versa. We begin with the following lemma, which we will invoke several times in the paper.

\begin{lemma}[Large cubes inside balls]\label{lemmafittingcubeinsideball}
Let $B=B(x,r)$ be a ball, and $Q$ a cube in some grid $\mathcal{D}_t$ with $B \subsetneq Q$. Then there exists a cube $\widehat{Q} \in \mathcal{D}(Q)$ with $x\in \widehat{Q} \subset B$ and $r\leq   \frac{2A}{\theta}  \theta^{g(\widehat{Q})}.$
\end{lemma}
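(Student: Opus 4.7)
The plan is to realise $\widehat{Q}$ as the cube of the grid $\mathcal{D}_t$ that contains $x$ at a carefully chosen generation, and control its generation by a minimality argument.

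First, I would use Proposition~\ref{theoremexistencedyadicgrids}(1), which partitions $X$ into the cubes of $\mathcal{D}_t$ at every fixed generation, to define, for each integer $k\geq g(Q)$, the unique cube $Q^{(k)}\in\mathcal{D}_t$ of generation $k$ with $x\in Q^{(k)}$. Since $Q$ itself is a cube of generation $g(Q)$ in $\mathcal{D}_t$ containing $x$, Remark~\ref{observationsdyadicsets}(ii) forces $Q^{(g(Q))}=Q$. The nesting/disjointness dichotomy of Proposition~\ref{theoremexistencedyadicgrids}(2), combined with the fact that every $Q^{(k)}$ meets $Q$ at the point $x$, then yields $Q^{(k)}\subset Q$ and $Q^{(k)}\in\mathcal{D}(Q)$ for all $k\geq g(Q)$.

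Next, by Proposition~\ref{theoremexistencedyadicgrids}\ref{thm:dyadics.subper}, $Q^{(k)}\subset B(z^{(k)},A\theta^k)$ where $z^{(k)}$ is the reference point of $Q^{(k)}$. Because $x\in Q^{(k)}$ as well, the triangle inequality gives $d(x,y)<2A\theta^k$ for every $y\in Q^{(k)}$. Consequently, whenever $2A\theta^k\leq r$, one has $Q^{(k)}\subset B(x,r)=B$; and since $\theta<1$ this containment certainly holds for all sufficiently large $k$. I would then set
\[
k^*:=\min\{k\geq g(Q)\,:\,Q^{(k)}\subset B\},
\]
which is finite, and define $\widehat{Q}:=Q^{(k^*)}$. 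The hypothesis $B\subsetneq Q$ prevents $k^*=g(Q)$ (since $Q^{(g(Q))}=Q\not\subset B$), so $k^*\geq g(Q)+1$, and by minimality $Q^{(k^*-1)}$, which still lies in $\mathcal{D}(Q)$, is \emph{not} contained in $B$.

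Finally, I would pick any $y\in Q^{(k^*-1)}\setminus B$; then $d(x,y)\geq r$ by the definition of $B$, while the triangle inequality applied as above to $Q^{(k^*-1)}\subset B(z^{(k^*-1)},A\theta^{k^*-1})$ gives $d(x,y)<2A\theta^{k^*-1}$. Combining these,
\[
r\leq d(x,y)<2A\theta^{k^*-1}=\frac{2A}{\theta}\theta^{k^*}=\frac{2A}{\theta}\theta^{g(\widehat{Q})},
\]
which together with $x\in\widehat{Q}\subset B$ and $\widehat{Q}\in\mathcal{D}(Q)$ yields the lemma. There is no real obstacle here; the only mild subtlety is to invoke Remark~\ref{observationsdyadicsets}(ii) to identify $Q^{(g(Q))}$ with $Q$ as labelled cubes, which is what makes the strict inequality $B\subsetneq Q$ translate cleanly into $k^*>g(Q)$ and drives the minimality step.
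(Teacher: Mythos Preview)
Your proof is correct and follows essentially the same approach as the paper: both select the cube of minimal generation in $\mathcal{D}(Q)$ containing $x$ and contained in $B$, use $B\subsetneq Q$ to ensure this generation strictly exceeds $g(Q)$, and then bound $r$ via the diameter of the parent cube which, by minimality, is not contained in $B$. The only cosmetic difference is that you parametrize the candidate cubes explicitly as the chain $(Q^{(k)})_{k\ge g(Q)}$ of cubes containing $x$, whereas the paper phrases the same minimization directly over the set $\{g(Q'):Q'\in\mathcal{D}(Q),\ x\in Q'\subset B\}$.
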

\begin{proof}
Consider the set of integers
\begin{equation}\label{minimumgenerationfittinglemma}
\lbrace g(Q') \, : \, Q' \in \mathcal{D}(Q),\, x\in Q' \subset B\rbrace.
\end{equation}
Let us first explain why this set is non-empty. The cube $Q$ can be written as the union of cubes of $\mathcal{D}(Q)$, all of them with generation equal to $l,$ where $l \geq g(Q)$ is so that $2A \theta^l<r.$ Because $B(x,r)$ is contained in $Q,$ one of these cubes, say $Q'$, must contain the point $x.$ Denoting by $z'$ the reference point of $Q',$ the ball $B(z',A \theta^{l})$ contains $Q'$ and then every point $y \in Q'$ satisfies $d(x,y)< 2A \theta^{l}<r.$ This implies that $x\in Q' \subset B(x,r),$ showing that the set in \eqref{minimumgenerationfittinglemma} is nonempty. 
Also, this set is bounded below by $g(Q)\in \Z,$ by the definition of $\mathcal{D}(Q)$. Therefore, the minimum of the set \eqref{minimumgenerationfittinglemma} is well-defined and there exists a cube $\widehat{Q} \in \mathcal{D}(Q)$ with $x\in \widehat{Q} \subset B$ with $g(\widehat{Q})$ equal to the minimum of \eqref{minimumgenerationfittinglemma}. Since $B \subsetneq Q,$ we must have $g(\widehat{Q}) > g(Q),$ as otherwise the equality $g(\widehat{Q})=g(Q)$ would lead us to $Q= \widehat{Q}$ by virtue of Lemma \ref{observationsdyadicsets}, and thus implying $B=Q,$ a contradiction. Consequently, $g(\widehat{Q})$ is strictly larger than $g(Q)$ and hence there exists another cube $Q^* \in \mathcal{D}(Q)$ with $\widehat{Q} \subset Q^*$ and $g(Q^*)=g(\widehat{Q})-1.$ By the minimality of $g(\widehat{Q}),$ the cube $Q^*$ cannot be contained in $B,$ and then $d(p,x) \geq r$ for some $p\in Q^*.$ Denoting by $z^*$ the reference point of $Q^*,$ the ball $B(z^*,A \theta^{g(Q^*)})$ contains $Q^*$ and, because $x\in Q^*$, we have
$$
r \leq d(p,x) \leq \diam(Q^*) \leq \diam \left( B(z^*,A \theta^{g(Q^*)}) \right) \leq 2A \theta^{g(Q^*)} = \frac{2A}{\theta} \theta^{g(\widehat{Q})}.
$$
This proves the assertion.
\end{proof}

\begin{remark}\label{remarkdyadicwpclosure}
Among other properties, Lemma \ref{lemmafittingcubeinsideball} permits to show that a set $E$ is dyadic weakly porous if and only if its closure $\overline{E}$ is, though possibly with different parameters $M,c.$

 Indeed, assume first that $E$ is dyadic weakly porous with constants $M\in \N$ and $c\in(0,1).$ If $Q_0 \in \mathcal{D}$ is a cube and $Q \in \mathcal{D}(Q_0)$ with $Q \subset Q_0 \setminus E,$ then $Q$ contains the open ball $B:=B(z, a \theta^{g(Q)}),$ where we denote by $z$ the reference point of $Q.$ If $Q=B$ as sets of points, then $Q$ is an open set and thus we have $Q \subset Q_0 \setminus \overline{E}.$ If $B \subsetneq Q,$ Lemma \ref{lemmafittingcubeinsideball} gives some $\widehat{Q} \in \mathcal{D}(Q) \subset \mathcal{D}(Q_0) $ so that $\widehat{Q} \subset B$ and $a\theta^{g(Q)} \leq \frac{2A}{\theta} \theta^{g(\widehat{Q})}$. We deduce $g(\widehat{Q}) \leq n + g(Q)$ for some natural $n$ depending only on $\theta,$ and also that $\widehat{Q} \subset Q_0 \setminus \overline{E}$ because $B$ is open. Also, denoting by $\widehat{z}$ the reference point of $\widehat{Q},$ the inclusions
 $$
 B(\widehat{z}, a \theta^{g(\widehat{Q})}) \subset \widehat{Q} \subset Q \subset B(z, A \theta^{g(Q)})
 $$
together with \eqref{Cmudoublingmeasure}, imply $\mu(\widehat{Q}) \geq c(\theta,C_\mu) \mu(Q).$ We have shown that given cubes $Q_1, \ldots, Q_N \in \mathcal{D}(Q_0)$ that are admissible for the definition of dyadic weak porosity with constants $M$,$c$, one can find pairwise disjoint cubes $\widehat{Q}_1, \ldots, \widehat{Q}_N \in \mathcal{D}(Q_0)$ so that $\widehat{Q}_i \subset Q_0 \setminus \overline{E}$, 
$$
g(\widehat{Q}_i) \leq n+ g(Q_i) \leq n+M +g_E(Q_0) \leq n+M  +g_{\overline{E}}(Q_0),
$$
and $\sum_{i=1}^N \mu(\widehat{Q}_i) \geq c(\theta,C_\mu) \sum_{i=1}^N\mu(Q_i) \geq c(\theta,C_\mu) \, c \mu(Q_0).$

Conversely, if $\overline{E}$ is dyadic weakly porous with parameters $M\in \N,$ $c\in(0,1),$ one can use the same argument as above involving Lemma \ref{lemmafittingcubeinsideball} to show that $g_{\overline{E}}(Q_0) \leq n + g_E(Q_0)$ for $n=n(\theta) \in \N$. Thus, every collection of admissible cubes $Q_1, \ldots , Q_N \in \mathcal{D}(Q_0)$ for the definition of dyadic weak porosity of $\overline{E}$ on $Q_0$ is also admissible for dyadic weak porosity of $E$, with parameters $M+n\in \N$ and $c\in(0,1).$ 
\end{remark}

\begin{lemma}\label{assumingdoublingdyadicwpimplieswp}
Assume that $g_E$ is doubling. Then the following holds.
\begin{enumerate}
[label=\normalfont{(\roman*)}]
\item $h_E$ is doubling.  
\item If, in addition, $E$ is dyadic weakly porous, then $E$ is weakly porous.
\end{enumerate}
\end{lemma}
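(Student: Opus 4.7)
The plan is to prove both statements by relating the ball-based function $h_E$ to the dyadic function $g_E$ via Lemma \ref{lemmafittingcubeinsideball}, which allows us to fit a dyadic descendant inside any strictly larger ball. Since $g_E$-doubling operates only within a single grid, the strategy is to confine all the cubes appearing in the argument to one grid.

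For (i), fix $x\in X$ and $R>0$. By Proposition \ref{theoremexistencedyadicgrids}(4), pick $Q^*\in\mathcal{D}$ with $B(x,2R)\subset Q^*$ and $\theta^{g(Q^*)}\sim R$. Applying Lemma \ref{lemmafittingcubeinsideball} to $B(x,R)\subsetneq Q^*$ yields a descendant $\widehat{Q}\in\mathcal{D}(Q^*)$ with $\widehat{Q}\subset B(x,R)$ and $g(\widehat{Q})-g(Q^*)\leq n$ for some $n=n(\theta)\in\N$. By \eqref{generalizedgEdoubling}, $g_E(\widehat{Q})\leq mn+g_E(Q^*)$, and the $E$-free dyadic subcube realising $g_E(\widehat{Q})$ contains a ball of radius $a\theta^{g_E(\widehat{Q})}$ inside $\widehat{Q}\subset B(x,R)$. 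This gives
\[
h_E(B(x,R))\geq a\theta^{mn}\theta^{g_E(Q^*)}.
\]
Conversely, take an $E$-free ball $B(y,r)\subset B(x,2R)$ with $r\geq h_E(B(x,2R))/2$, and apply Lemma \ref{lemmafittingcubeinsideball} to $B(y,r)\subsetneq Q^*$ to obtain $\widehat{Q}'\in\mathcal{D}(Q^*)$ with $\widehat{Q}'\subset B(y,r)\subset Q^*\setminus E$ and $\theta^{g(\widehat{Q}')}\geq r\theta/(2A)$. Then $g_E(Q^*)\leq g(\widehat{Q}')$, so $\theta^{g_E(Q^*)}\gtrsim h_E(B(x,2R))$, and combining the two estimates yields $h_E(B(x,2R))\leq C\,h_E(B(x,R))$.

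For (ii), given $B=B(x,R)$, pick $Q_B\in\mathcal{D}$ with $B\subset Q_B$ and $\theta^{g(Q_B)}\sim R$, and then apply Lemma \ref{lemmafittingcubeinsideball} to find $Q^*\in\mathcal{D}(Q_B)$ with $x\in Q^*\subset B$, $\theta^{g(Q^*)}\sim R$, and $g(Q^*)-g(Q_B)\leq n$; doubling of $\mu$ gives $\mu(Q^*)\gtrsim\mu(B)$. Dyadic weak porosity applied to $Q^*$ furnishes mutually disjoint $Q_1,\dots,Q_N\in\mathcal{D}(Q^*)$ with $Q_i\subset Q^*\setminus E$, $g(Q_i)\leq M+g_E(Q^*)$, and $\sum_i\mu(Q_i)\geq c\,\mu(Q^*)$. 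Let $z_i$ be the reference point of $Q_i$ and set $B_i:=B(z_i,r_i)$ with $r_i:=\min\{a\theta^{g(Q_i)},2R\}$. Then $B_i\subset Q_i\subset B\setminus E$, the $B_i$ are pairwise disjoint, $r_i\leq 2R$, and doubling of $\mu$ yields $\mu(B_i)\gtrsim\mu(Q_i)$, securing conditions \ref{weakporradupperbound} and \ref{weakporcomparablemeasures}. For \ref{weakporradlowerbound}, the upper-bound step of (i) applied to $B\subset Q_B$ gives $\theta^{g_E(Q_B)}\gtrsim h_E(B)$, and \eqref{generalizedgEdoubling} yields $\theta^{g_E(Q^*)}\geq\theta^{mn}\theta^{g_E(Q_B)}\gtrsim h_E(B)$; combining with $g(Q_i)\leq M+g_E(Q^*)$ produces $a\theta^{g(Q_i)}\gtrsim\theta^M\theta^{g_E(Q^*)}\gtrsim h_E(B)$, and since also $2R\geq h_E(B)$, in either branch of the minimum we obtain $r_i\geq\delta\,h_E(B)$.

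The main technical obstacle is that $g_E$-doubling is a single-grid statement, whereas $h_E$-doubling and weak porosity concern objects that can cross grids. Lemma \ref{lemmafittingcubeinsideball} resolves this by confining all the relevant cubes to one grid $\mathcal{D}(Q^*)$ or $\mathcal{D}(Q_B)$, so the dyadic doubling constant $m$ translates, through a bounded overhead $n=n(\theta)$, into the desired ball-level constants. Degenerate configurations (e.g., when a ball coincides as a set of points with its containing dyadic cube, so the fitting lemma does not apply directly) are handled separately, since in those cases $h_E$ is automatically of order $R$ and the required inequalities become immediate.
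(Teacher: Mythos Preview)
Your proposal is correct and follows essentially the same approach as the paper: in both parts you sandwich the ball-quantities $h_E(B)$ between dyadic quantities $\theta^{g_E(Q)}$ using Lemma~\ref{lemmafittingcubeinsideball} and Proposition~\ref{theoremexistencedyadicgrids}(4), confine all cubes to a single grid $\mathcal{D}(Q^*)$ (or $\mathcal{D}(Q_B)$), and invoke \eqref{generalizedgEdoubling} to transfer the doubling. The only points you leave implicit are the small radius adjustment needed so that the inscribed ball $B(z,a\theta^{g_E(\widehat{Q})})$ is admissible for $h_E(B(x,R))$ (the paper multiplies by $\theta^2$), and the degenerate cases $B_0\cap E=\emptyset$ or $B_0=2B_0$ that you mention at the end; these are indeed routine and handled by the paper in exactly the way you indicate.
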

 \begin{proof} \textbf{(i)} Fix the balls $B_0=B(x_0,R_0),$ $2B_0=B(x_0, 2 R_0) .$ If $B(x_0,R_0) \cap E = \emptyset$, then 
 $$
 h_E(B(x_0,2R_0)) \leq 4R_0 \leq 4 h_E(B(x_0,R_0)),
 $$ 
and the doubling property for $h_E$ holds.

Now suppose that $B(x_0,R_0) \cap E \neq \emptyset.$ We may also assume that $B_0 \subsetneq 2B_0$ because in the case where $B_0=2B_0$ (as sets of points), for every ball $B(y,r) \subset 2B_0 \setminus E$ with $0<r\leq 4R_0$, the concentric ball $B(y,r/2)$ is contained in $B_0 \setminus E$ and $0<r/2\leq 2R_0,$ from which we would obtain $h_E(2B_0) \leq 2 h_E(B_0).$

By Proposition \ref{theoremexistencedyadicgrids}, there exist $t$ and $Q_0^* \in \mathcal{D}_t$ with $2B_0 \subset Q_0^*,$ and $\theta^{g(Q_0^*)+2} < 2 R_0 \leq \theta^{g(Q_0^*)+1}.$ 
If $B=B(y,r) \subset 2B_0 \setminus E$, then $B \subsetneq 2B_0^*$ since $B_0 \cap E \neq \emptyset,$ and thereby $B \subsetneq Q_0^*.$ Lemma \ref{lemmafittingcubeinsideball} provides us with a cube $Q \in \mathcal{D}(Q_0^*)$ with $Q \subset B$ and $r \leq \frac{2A}{\theta} \theta^{g(Q)}.$ Because $Q \cap E= \emptyset$, this implies
$$
h_E(2 B_0) \leq \frac{2A}{\theta} \theta^{g_E(Q_0^*)}.
$$
Since $B_0 \subsetneq 2B_0 \subset Q_0^*$ we can apply Lemma \ref{lemmafittingcubeinsideball} to obtain a cube $Q_0 \in \mathcal{D}(Q_0^*)$ with $Q_0 \subset B_0$ and $R_0\leq \frac{2A}{\theta} \theta^{g(Q_0)}$. Then the inequality $\theta^{g(Q_0^*)+2} < 2R_0$ tells us that $g(Q_0)$ and $g(Q_0^*)$ are comparable, meaning that $g(Q_0)\leq n + g(Q_0^*)$ for some $n=n(\theta) \in \N.$ Using the doubling property of $g_E$ (more precisely, the estimates \eqref{generalizedgEdoubling}) for the cubes $Q_0$ and $Q_0^*$, we get
$$
\theta^{g_E(Q_0^*)}   \leq C(\theta,m) \theta^{g_E(Q_0)};
$$
where $m \in \N$ is as in Definition \ref{definitionofdoubling}. Given a cube $Q \in \mathcal{D}(Q_0)$ with reference point $z$ and $Q \cap E= \emptyset,$ the $B(z, a \theta^{g(Q)})$ is contained in $B_0 \setminus E,$ with 
$$
  \theta^{g(Q)}  \leq   \theta^{g(Q_0)} \leq    \theta^{g(Q_0^*)} <  \frac{2R_0}{\theta^2} .
$$
Thus, the $\theta^2$-contraction of $B(z, a \theta^{g(Q)})$ is a ball inside $B_0 \setminus E$ that is admissible for the definition of $h_E(B_0),$ and so 
$$
a \theta^2 \theta^{g(Q)} \leq h_E(B_0).
$$
Therefore $\theta^{g_E(Q_0)} \leq \frac{1}{a \theta^2} h_E(B_0),$ and gathering all the estimates, we conclude $h_E(2 B_0) \leq C(m,\theta) h_E(B_0).$ This shows that $h_E$ is doubling.

\medskip

\textbf{(ii)} Let $B_0=B(x_0,R_0)$ be a fixed ball with $B_0 \cap E \neq \emptyset$. By Proposition \ref{theoremexistencedyadicgrids}, we can find $Q_0\in \mathcal{D}$ with $B_0 \subset Q_0$ and $\theta^{g(Q_0)+2} < R_0 \leq \theta^{g(Q_0)+1}.$

Let us fit a cube $\widehat{Q}_0 \in \mathcal{D}(Q_0)$ in $B_0$ of \textit{length} $\theta^{g(\widehat{Q}_0)}$ comparable to $R_0.$ If $B_0=Q_0$ (as sets of points), then we simply take $\widehat{Q}_0$ as $Q_0.$ If $B_0 \subsetneq Q_0,$ by Lemma \ref{lemmafittingcubeinsideball} we get a cube $\widehat{Q}_0 \in \mathcal{D}(Q_0)$ with $Q_0 \subset B_0$ and $R_0 \leq \frac{2A}{\theta} \theta^{g(\widehat{Q}_0)}.$ In any case, we have the relation
\begin{equation}\label{proportionradiidyadicsize}
  \theta^{g(Q_0)+2}< R_0 \leq    \frac{2A}{\theta} \theta^{g(\widehat{Q}_0)}.
\end{equation}
Also, denoting the reference points of $Q_0$, $\widehat{Q}_0$ respectively by $z_0,$ $\widehat{z}_0$ we have the inclusions,
$$
B(\widehat{z}_0,a \theta^{g({\widehat{Q}_0})}) \subset \widehat{Q}_0 \subset B_0 \subset Q_0 \subset B(z_0,A \theta^{g(Q_0)}) \subset B(\widehat{z}_0,3A \theta^{g(Q_0)})\subset B(\widehat{z}_0,6A^2 \theta^{-3} \theta^{g({\widehat{Q}_0})});
$$
from which the doubling condition \eqref{Cmudoublingmeasure} for $\mu$ leads us to
\begin{equation}\label{proportiondyadicmeasure}
\mu(\widehat{Q}_0)\geq   C_\mu^{-1-\left\lfloor \log_2(6A^2a^{-1} \theta^{-3}) \right\rfloor} \mu(B_0).
\end{equation}

Because $E$ is dyadic weakly porous, there exist $E$-free and pairwise disjoint cubes $Q_1, \ldots, Q_N$ of $\mathcal{D}(\widehat{Q}_0)$ with 
\begin{equation}\label{displayedpropertiesauxiliarcubes}
g(Q_i) \leq M + g_E(\widehat{Q}_0), \:\: i=1,\ldots,N, \quad \text{and} \quad \sum_{i=1}^N \mu(Q_i) \geq c \mu(\widehat{Q}_0).
\end{equation}

If $z_i$ denotes the reference point of $Q_i$ define the ball $B_i=B ( z_i, r_i ),$ with $r_i:=a \theta^2 \theta^{g(Q_i)}.$ By Proposition \ref{theoremexistencedyadicgrids}, each $B_i$ is contained in $Q_i,$ implying that the collection $\lbrace B_i \rbrace_i$ is disjoint, and that $B_i \subset \widehat{Q}_0 \setminus E \subset B_0 \setminus E.$ Let us verify the properties \ref{weakporradupperbound}, \ref{weakporradlowerbound} and \ref{weakporcomparablemeasures} from Definition \ref{definitionweakporballs} for the collection $\lbrace B_i \rbrace_i$.

Notice that $r_i \leq a  \theta^{g(Q_0)+2} < R_0,$ and so property \ref{weakporradupperbound} holds.

To prove \ref{weakporradlowerbound}, observe that for each ball $B=B(p,r) \subset B_0 \setminus E$ with $0<r \leq 2 R_0,$ we have $B \subsetneq B_0 \subset Q_0$, and Lemma \ref{lemmafittingcubeinsideball} can be applied to obtain $Q \in \mathcal{D}(Q_0) $ with $Q \subset B$ and $r\leq  \frac{2A}{\theta}\theta^{g(Q)}.$ 
Since $Q \cap E = \emptyset,$ this proves $h_E(B_0) \leq \frac{2A}{\theta} \theta^{g_E(Q_0)}.$ The radius $r_i$ of the ball $B_i$ satisfies $r_i \geq a \theta^{M+2} \theta^{g_E(\widehat{Q}_0)},$ and using first that $g_E$ is doubling and then \eqref{proportionradiidyadicsize}, we get
$$
r_i \geq a \theta^{M+2} \theta^{g_E(\widehat{Q}_0)} \geq a \theta^{M+2} \theta^{m (g(\widehat{Q}_0)-g(Q_0))} \theta^{g_E( Q_0)}  \geq a \theta^{M+2}  \left( \frac{\theta^3}{2A} \right)^m \theta^{g_E( Q_0)} \geq \delta h_E(B_0);
$$
where $\delta= a\theta^{3}    \frac{\theta^{M+3m}}{(2A)^{m+1}}.$

As concerns property \ref{weakporcomparablemeasures}, we use that $B_i=B(z_i, a \theta^2 \theta^{g(Q_i)}) \subset  Q_i \subset B ( z_i, A \theta^{g(Q_i)}),$ and the doubling condition \eqref{Cmudoublingmeasure} on the measure $\mu$ to obtain
$$
\mu(Q_i) \leq C_\mu^{1+\left\lfloor \log_2( a^{-1} \theta^{-2 }A) \right\rfloor} \mu(B_i).
$$
Combining this estimate with \eqref{displayedpropertiesauxiliarcubes} and \eqref{proportiondyadicmeasure}, we obtain
$$
\sum_{i=1}^N \mu(B_i) \geq  c_0(\theta,C_\mu) \, c \,  \mu(B_0).
$$
for some constant $c_0(\theta,C_\mu)>0$ depending only on $\theta$ and $C_\mu.$ 
\end{proof}

Now we show that the formulation with balls implies the dyadic formulation.

\begin{lemma} \label{assumingdoublingwpimpliesdyadicwp}
Assume that $h_E$ is doubling with constant $C.$ Then the following holds. 
\begin{enumerate}[label=\normalfont{(\roman*)}]
\item $g_E$ is doubling with constant $m=m(\theta,C) \in \N.$
\item If, in addition, $E$ is weakly porous with constants $(c,\delta)$, then $E$ is dyadic weakly porous with constants $c'=c'(\theta,C_\mu,c)$ and $M=M(\theta,\delta,C).$
\end{enumerate}
\end{lemma}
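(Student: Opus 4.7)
The strategy will be to translate the ball formulation into the dyadic one using Lemma \ref{lemmafittingcubeinsideball} as the bridge, essentially reversing the argument of Lemma \ref{assumingdoublingdyadicwpimplieswp}.

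\textbf{Part (i).} For $Q \in \mathcal{D}(Q^*)$ with $g(Q) = g(Q^*) + 1$ and reference points $z, z^*$, my plan is to compare two concentric balls around $z$: the inscribed $B_s := B(z, a\theta^{g(Q)}) \subset Q$ and the circumscribed $B_b := B(z, 2A\theta^{g(Q^*)})$, which contains $Q^*$ since $d(z, z^*) < A\theta^{g(Q^*)}$. As the radii ratio $2A/(a\theta)$ depends only on $\theta$, a fixed number of iterations of the doubling hypothesis on $h_E$ will give $h_E(B_b) \leq C_1 h_E(B_s)$ with $C_1 = C_1(\theta, C)$. I then bound $h_E(B_b)$ from below and $h_E(B_s)$ from above in terms of the dyadic holes: a minimal $E$-free cube $P \in \mathcal{D}(Q^*)$ realizing $g_E(Q^*)$ contributes the admissible ball $B(z_P, a\theta^{g_E(Q^*)}) \subset P \subset B_b$, giving $h_E(B_b) \geq a\theta^{g_E(Q^*)}$; while for any $E$-free ball $B(y, r) \subset B_s$ (which satisfies $B(y, r) \subsetneq Q$), Lemma \ref{lemmafittingcubeinsideball} yields an $E$-free cube $\widehat{Q} \in \mathcal{D}(Q)$ inside $B(y, r)$ with $r \leq (2A/\theta)\theta^{g(\widehat{Q})} \leq (2A/\theta)\theta^{g_E(Q)}$. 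Chaining,
\[
a\theta^{g_E(Q^*)} \leq h_E(B_b) \leq C_1\, h_E(B_s) \leq \frac{2A\, C_1}{\theta}\, \theta^{g_E(Q)},
\]
will give $g_E(Q) \leq g_E(Q^*) + m$ for a constant $m = m(\theta, C) \in \N$; the case $g_E(Q^*) = \infty$ forces $g_E(Q) = \infty$ and is trivial.

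\textbf{Part (ii).} I will fix $Q_0 \in \mathcal{D}$ with reference point $z_0$, write $k_0 := g(Q_0)$, and dispose of the trivial case $Q_0 \cap E = \emptyset$ by choosing the singleton family $\{Q_0\}$. Otherwise, I apply weak porosity to $B := B(z_0, a\theta^{k_0}) \subset Q_0$, obtaining disjoint $E$-free balls $\{B_i = B(x_i, r_i)\}_{i=1}^N$ in $B \setminus E$ with $r_i \geq \delta\, h_E(B)$ and $\sum_i \mu(B_i) \geq c\,\mu(B)$. The inclusion $B_i \subsetneq Q_0$ is strict (otherwise $B_i = Q_0$ together with $B_i \cap E = \emptyset$ would contradict $Q_0 \cap E \neq \emptyset$), so Lemma \ref{lemmafittingcubeinsideball} will produce pairwise disjoint $E$-free cubes $\widehat{Q}_i \in \mathcal{D}(Q_0)$ with $x_i \in \widehat{Q}_i \subset B_i$ and $r_i \leq (2A/\theta)\theta^{g(\widehat{Q}_i)}$. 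These $\widehat{Q}_i$ are the candidate cubes for the dyadic weak porosity of $E$ on $Q_0$.

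The remaining work is to bound the generations and the total measure. For the generations, I will compare $B$ with the circumscribed concentric ball $B^* := B(z_0, 2A\theta^{k_0}) \supset Q_0$ via the same iterative doubling argument as in part (i), yielding $h_E(B^*) \leq C_2\, h_E(B)$ with $C_2 = C_2(\theta, C)$, and use that a minimal $E$-free dyadic cube of $Q_0$ furnishes an admissible ball for $h_E(B^*)$, giving $h_E(B^*) \geq a\theta^{g_E(Q_0)}$. Chaining,
\[
\theta^{g(\widehat{Q}_i)} \geq \frac{\theta\, r_i}{2A} \geq \frac{\theta\,\delta\, h_E(B)}{2A} \geq \frac{a\,\theta\,\delta}{2A\, C_2}\, \theta^{g_E(Q_0)},
\]
which will yield $g(\widehat{Q}_i) \leq g_E(Q_0) + M$ with $M = M(\theta, \delta, C) \in \N$. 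For the measure, $\widehat{Q}_i$ contains a ball of radius $a\theta^{g(\widehat{Q}_i)}$ around its reference point whose radius is comparable to $r_i$ within a constant depending only on $\theta$; the doubling condition \eqref{Cmudoublingmeasure} then gives $\mu(\widehat{Q}_i) \geq c(\theta, C_\mu)\,\mu(B_i)$, and summing together with $\mu(B) \geq c(\theta, C_\mu)\,\mu(Q_0)$ (again by doubling) produces $\sum_i \mu(\widehat{Q}_i) \geq c'(\theta, C_\mu)\, c\, \mu(Q_0)$. The main subtlety will be tracking that each chain of doublings has length governed only by $\theta$, so that all multiplicative constants package into explicit functions of $\theta, C, \delta$, and $C_\mu$ as stated.
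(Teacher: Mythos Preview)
Your proposal is correct and follows essentially the same approach as the paper's proof: both use the inscribed ball $B(z_0,a\theta^{g(Q_0)})$ and a circumscribed concentric ball to transfer the doubling of $h_E$ to $g_E$, invoke Lemma~\ref{lemmafittingcubeinsideball} to place dyadic cubes inside $E$-free balls, and chain the resulting inequalities. The only minor point you glossed over is the edge case $Q\cap E=\emptyset$ in Part~(i), where your parenthetical ``which satisfies $B(y,r)\subsetneq Q$'' can fail; the paper handles this separately (and trivially, since then $g_E(Q)=g(Q)$ and $h_E(B_s)\le 2a\theta^{g_E(Q)}$), and you should too.
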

\begin{proof} \textbf{(i)} Let $Q_0,$ $Q_0^*$ be two cubes in a common grid, with $Q _0 \in \mathcal{D}( Q_0^*)$ and $g(Q_0^*)=g(Q_0)-1.$ If $z_0$, $z_0^*$ are respectively the reference points of $Q_0$ and $Q_0^*,$ define $B_0=B(z_0,a\theta^{g(Q_0)})$ and $B_0^*=B(z_0^*, A \theta^{g(Q_0^*)}).$ By Proposition \ref{theoremexistencedyadicgrids}, $B_0 \subset Q_0$ and $Q_0^* \subset B_0^*.$ Also observe that $B_0^* \subset B(z_0,\frac{2A}{\theta} \theta^{g(Q_0)}),$ and this implies, by virtue of the doubling condition for $h_E$,
\begin{equation}\label{lemmadoublingimpliesdyadicestimateballs}
h_E(B_0^*) \leq C^{1+\lfloor \log_2\left(\frac{2A}{a \theta}\right) \rfloor } h_E(B_0);
\end{equation}
where $C>0$ is that of Definition \ref{definitionofdoubling}.

Let us now verify that $ \theta^{g_E(Q_0^*)} \lesssim h_E(B_0^*)$ and $h_E(B_0) \lesssim \theta^{g_E(Q_0)}.$

If $Q \in \mathcal{D}(Q_0^*)$ is such that $Q \cap E =\emptyset$ and has reference point $z,$ then the ball $B(z, a \theta^{g(Q)})$ is contained in $B_0^* \setminus E$ and its radius $a \theta^{g(Q)}$ is smaller than $A \theta^{g(Q_0^*)}$. By the definition of $h_E(B_0^*)$ and $g_E(Q_0^*)$ \eqref{definitionmaxholeballs}--\eqref{definitiondyadicmaximalhole}, we get $a\theta^{g_E(Q_0^*)} \leq h_E(B_0^*).$ In the case where $Q \cap E \neq \emptyset$ for all $Q\in \mathcal{D}(Q_0^*),$ we have $g_E(Q_0^*)=+\infty$ and $\theta^{g_E(Q_0^*)}=0,$ and the estimate $\theta^{g_E(Q_0^*)} \lesssim h_E(B_0^*)$ is true as well.

For the other estimate $h_E(B_0) \lesssim \theta^{g_E(Q_0)}$, suppose first that $Q_0 \cap E = \emptyset.$ Then $g_E(Q_0)=g(Q_0)$ and $h_E(B_0) \leq 2a \theta^{g(Q_0)}=2a \theta^{g_E(Q_0)}$. Now, if $Q_0 \cap E \neq \emptyset,$ let $B=B(x,r)$ be a ball contained in $B_0\setminus E$ (If such ball does not exist, then $h_E(B_0)=0$ and the desired estimate follows immediately). Because of the strict inclusion $B \subsetneq Q_0$ (as $B$ does not intersect $E$), we can apply Lemma \ref{lemmafittingcubeinsideball} in order to obtain a cube $Q \in \mathcal{D}(Q_0)$ with $Q \subset B$ and $r \leq \frac{2A}{\theta} \theta^{g(Q)}.$ Because $Q \cap E = \emptyset,$ we may conclude $h_E(B_0) \leq \frac{2A}{\theta} \theta^{g_E(Q_0)}.$

These estimates together with \eqref{lemmadoublingimpliesdyadicestimateballs} give $\theta^{g_E(Q_0^*)} \leq \kappa(\theta,C) \theta^{g_E(Q_0)},$ showing that $g_E$ is doubling.

\medskip

\textbf{(ii)} Let $Q_0$ a cube of some dyadic grid $\mathcal{D}_t,$ with reference point $z_0.$ We may assume that $Q_0 \cap E \neq \emptyset,$ as otherwise the dyadic weak porosity property holds for $Q_0.$ Defining $B_0:=B(z_0, a\theta^{g(Q_0)})$ and $B_0^*:= B(z_0, A\theta^{g(Q_0)}),$ we have the inclusions $B_0 \subset Q_0 \subset B_0^*,$ and so $\mu(B_0) \geq \kappa(C_\mu) \mu(Q_0)$ by virtue of the doubling condition for $\mu.$

We first claim that $\theta^{g_E(Q_0)} \leq \kappa(\theta,C) h_E(B_0)$ where $C$ is the constant of Definition \ref{definitionofdoubling} for $h_E.$ Indeed, if $Q\in \mathcal{D}(Q_0) $ with $Q \cap E =\emptyset,$ and $Q$ has reference point $z,$ then the ball $B(z,a \theta^{g(Q)})$ is contained in $ B_0^* \setminus E,$ and obviously $ a \theta^{g(Q)} \leq A \theta^{g(Q_0)}.$ This argument implies that $a \theta^{g_E(Q_0)} \leq h_E(  B_0^*).$ Because $h_E$ is doubling, we have
\begin{equation}\label{referenceballofacubehavecomparableholes}
a \theta^{g_E(Q_0)} \leq h_E(  B_0^*) \leq \kappa(C) h_E(B_0),
\end{equation}
which proves our claim.

Now, we apply the weak porosity condition for $E$ to the ball $B_0$, obtaining disjoint balls $\lbrace B_i=B(x_i,r_i) \rbrace_{i=1}^N$ contained in $B_0 \setminus E$ as in Definition \ref{definitionweakporballs}, with constants $c, \delta \in (0,1).$ Since $B_i \subsetneq Q_0,$ Lemma \ref{lemmafittingcubeinsideball} gives a cube $Q_i \in \mathcal{D}(Q_0)$ with $Q_i \subset B_i$ and $r_i \leq \frac{2A}{\theta} \theta^{g(Q_i)}.$ In particular, $Q_i \cap E = \emptyset$ for each $i,$ and the cubes $\lbrace Q_i \rbrace_i$ are mutually disjoint.

On the other hand, $2 A \theta^{-1} \theta^{g(Q_i)} \geq r_i \geq \delta h_E(B_0)$, and by \eqref{referenceballofacubehavecomparableholes} this yields $\theta^{g(Q_i)} \geq \kappa(\theta,\delta,C) \theta^{g_E(Q_0)} .$ Thus, there exists $ M(\delta,C, \theta) \in \N$ so that
$$
g(Q_i) \leq M(\delta,C, \theta)+  g_E(Q_0), \quad \text{for each} \quad i=1,\ldots,N.
$$

Also, if $z_i$ denotes the reference point of $Q_i,$ the following inclusions hold
$$
B(z_i, a \theta^{g(Q_i)}) \subset Q_i \subset B_i \subset B(z_i, 4A \theta^{-1} \theta^{g(Q_i)}).
$$
Then, the doubling property of $\mu$ implies that $  \mu(B_i) \leq C_\mu^{1+ \left\lfloor \log_2(4A \theta^{-1} a^{-1}) \right\rfloor} \mu(Q_i).$ So, the cubes $\lbrace Q_i \rbrace_i$ are disjoint $E$-free cubes in $\mathcal{D}(Q_0)$ with
$$
\kappa(\theta,C_\mu)\sum_{i=1}^N \mu(Q_i) \geq \sum_{i=1}^N \mu(B_i) \geq c \, \mu(B_0) \geq  \kappa(C_\mu) \, c \, \mu(Q_0),
$$
and then we get $\sum_{i=1}^N \mu(Q_i) \geq \kappa(\theta,C_\mu) \, c \, \mu(Q_0),$ for certain constant $\kappa(\theta,C_\mu)>0.$ 
\end{proof}

\section{Implications of the $A_1$-condition}\label{sectionecessitymaintheorem}

The purpose of this section is to prove the implication \ref{maintheoremconditionwp}$\implies$\ref{maintheoremconditionA1} of Theorem \ref{maintheoremallspaces}. In order to do so, we start by proving that if $\dist( \cdot, E)^{-\alpha} \in A_1(X)$ for some $\alpha>0,$ then the set $E$ is dyadic weakly porous, and $g_E$ is doubling; see Definitions \ref{definitiondyadicweakporosity} and \ref{definitionofdoubling}. We will use the fact that a weight $w\in A_1(X)$ (see \eqref{d.A1}) if and only if the same $A_1$-estimates hold when replacing all balls with all cubes $Q\in \mathcal{D},$ i.e, there exists a constant $C>\infty$ so that
$$
\intav_Q w \ud\mu \leq C \essinf_{Q} w, \quad \text{for all} \quad Q\in \mathcal{D}.
$$
This can be easily proved using properties (3) and (4) of Proposition \ref{theoremexistencedyadicgrids}, together with the doubling condition \eqref{Cmudoublingmeasure} for the measure $\mu.$ 

\begin{lemma}\label{dyadicimplicationsA1}
If $E \subset X$ is a subset, and $w= \dist( \cdot, E)^{-\alpha} \in A_1(X)$ with $\alpha>0,$ the following holds.
\begin{enumerate}[label=\normalfont{(\roman*)}]
\item For every $c\in (0,1),$ there exists $M=M(c,\alpha,[w]_1,\theta,C_\mu) \in \N$ such that, for every dyadic cube $Q_0 \subset X,$ there are mutually disjoint cubes $Q_1,\ldots,Q_N \in \mathcal{D}(Q_0)$ containing no point of $E,$ such that $g(Q_i) \leq M + g_E(Q_0)$ for every $i,$ and $\sum_{i=1}^N \mu(Q_i) \geq c \mu(Q_0).$ In  particular, $E$ is dyadic weakly porous.
\item There exists $m=m(\alpha,[w]_1,\theta,C_\mu) \in \N$ such that, for every two cubes $Q_0, Q_0^*$ in a same grid with $Q_0 \in \mathcal{D}(Q_0^*)$ and $g(Q_0)=g(Q_0^*)+1,$ we have
$$
g_E(Q_0) \leq m  + g_E(Q_0^*).
$$ 
In other words, $g_E$ is doubling with constant $m$.
\end{enumerate}

\end{lemma}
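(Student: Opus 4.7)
My plan is to exploit the $A_1$ condition for $w = \dist(\cdot, E)^{-\alpha}$ in its dyadic formulation (which holds with a comparable constant depending on $[w]_{A_1}, \theta, C_\mu$, as the authors note just before the statement). The unifying observation for both parts is a one-sided comparison between $\essinf_Q w$ and $\theta^{-\alpha g_E(Q)}$ on any cube $Q \in \mathcal{D}$: if $g_E(Q) < \infty$ and $Q' \in \mathcal{D}(Q)$ is an $E$-free descendant of generation $g_E(Q)$ with reference point $z'$, then the ball $B(z', a\theta^{g_E(Q)}/2) \subset Q'$ lies at distance at least $a\theta^{g_E(Q)}/2$ from $E$, so $\essinf_Q w \leq (2/a)^{\alpha} \theta^{-\alpha g_E(Q)}$. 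Combined with the dyadic $A_1$ inequality, this gives
\begin{equation*}
\int_Q w \, d\mu \leq C_0\, \theta^{-\alpha g_E(Q)}\, \mu(Q),
\end{equation*}
for a constant $C_0 = C_0(\alpha,[w]_{A_1},\theta,C_\mu)$.

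For (i), I would fix $Q_0$ with $l := g_E(Q_0) < \infty$ (the case $l = \infty$ being vacuous), and for each $k \in \N$ split the dyadic partition of $Q_0$ at generation $l+k$ into the $E$-free family $\mathcal{G}_k$ and the $E$-intersecting family $\mathcal{F}_k$; set $F_k := \bigcup \mathcal{F}_k$. Each $x \in F_k$ lies in a cube of generation $l+k$ meeting $E$, forcing $\dist(x,E) \leq 2A\theta^{l+k}$ and hence $w(x) \geq (2A)^{-\alpha}\theta^{-\alpha(l+k)}$. Comparing $\int_{F_k} w\, d\mu$ with the upper bound on $\int_{Q_0} w\, d\mu$ above would give $\mu(F_k) \leq C_0 (2A)^{\alpha} \theta^{\alpha k}\, \mu(Q_0)$. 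Choosing $M \in \N$ with $C_0(2A)^{\alpha}\theta^{\alpha M} \leq 1-c$, the family $\mathcal{G}_M$ then consists of mutually disjoint $E$-free descendants of $Q_0$ of generation $g_E(Q_0)+M$ whose total measure is at least $c\,\mu(Q_0)$, which is exactly what is required.

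For (ii), I would reduce to the case $l := g_E(Q_0^*) < \infty$ and $g_E(Q_0) = l + k + 1$ with $k \geq 1$ (smaller values of $g_E(Q_0)$ give the estimate trivially for any $m \geq 1$). The definition of $g_E(Q_0)$ forces every dyadic descendant of $Q_0$ at generation $l+k$ (which satisfies $l+k \geq g(Q_0)$ since $k \geq 1$) to meet $E$, so the pointwise bound $w(x) \geq (2A)^{-\alpha}\theta^{-\alpha(l+k)}$ now holds on all of $Q_0$. Integrating, bounding the result above by $\int_{Q_0^*} w\, d\mu \leq C_0\, \theta^{-\alpha l} \mu(Q_0^*)$, and using the doubling comparison $\mu(Q_0^*) \leq C(\theta, C_\mu)\, \mu(Q_0)$ (parent and child are essentially balls of comparable radii) yields
\begin{equation*}
\theta^{-\alpha k} \leq C'(\alpha,[w]_{A_1},\theta,C_\mu),
\end{equation*}
which bounds $k$ and hence supplies the desired $m$.

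I do not anticipate a major obstacle: both implications run on the same engine, namely the upper bound $\essinf_Q w \lesssim \theta^{-\alpha g_E(Q)}$ coming from an $E$-free descendant, versus the pointwise lower bound $w \gtrsim \theta^{-\alpha(g_E(Q)+k)}$ on the $E$-intersecting part at depth $k$ past $g_E(Q)$. Minor care is needed for the vacuous case $g_E = +\infty$ and the edge case $k = 0$ in part (ii), but both are immediate. The final constants $M$ and $m$ depend only on the parameters listed in the statement together with the absolute constants $a$ and $A$ from Proposition \ref{theoremexistencedyadicgrids}.
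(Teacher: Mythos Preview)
Your proposal is correct and follows essentially the same strategy as the paper's proof: both parts hinge on the upper bound $\essinf_Q w \lesssim \theta^{-\alpha g_E(Q)}$ obtained from an $E$-free descendant, played against the pointwise lower bound $w \gtrsim \theta^{-\alpha(l+k)}$ on the portion of $Q_0$ covered by $E$-intersecting subcubes at depth $l+k$. The only noteworthy difference is in part~(i): you partition $Q_0$ at the single generation $g_E(Q_0)+M$ and keep the $E$-free cubes there, which are automatically pairwise disjoint and finite in number, whereas the paper works with the union $\mathcal{A}_M$ of \emph{all} $E$-free descendants of generation at most $g_E(Q_0)+M$ and then passes to a maximal disjoint subfamily. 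Your variant is a mild streamlining that bypasses that maximality extraction. One small point: ``vacuous'' for $g_E(Q_0)=+\infty$ deserves a word of justification---this case cannot occur, since it would force $Q_0\subset\overline{E}$ and hence $w=+\infty$ a.e.\ on a set of positive measure, contradicting local integrability.
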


\begin{proof} First of all, notice that $\mu(E)=0$ since $w:=\dist(\cdot,E)^{-\alpha} $ is locally integrable in $X.$

\smallskip

\item[$\textbf{(i)}$] Thanks to Remark \ref{remarkdyadicwpclosure}, we may assume that $E$ is closed. Fix a cube $Q_0$ in some system $\mathcal{D}_t$ and assume that $Q_0 \cap E \neq \emptyset.$ Let $M\in \N$ be a number to be chosen later, and consider
$$
\mathcal{A}_M= \bigcup \lbrace Q \in \mathcal{D}(Q_0) \, : \, Q \cap E = \emptyset \, \text{ and }  \, g(Q) \leq M+ g_E(Q_0) \rbrace.
$$
Now, for $x\in \left( Q_0 \setminus E \right) \setminus \mathcal{A}_M,$ define
$$
g_x=\min \lbrace g(Q) \, : \, Q \in \mathcal{D}(Q_0), \,  x\in Q,  \text{ and }  \, Q \cap E =\emptyset \rbrace.
$$
Because $E$ is closed, we have that $\dist(x,E)>0,$ and then an argument similar to the one in the beginning of the proof of Lemma \ref{lemmafittingcubeinsideball} shows that $g_x$ is well-defined. 
 Let $Q_x \in \mathcal{D}(Q_0)$ be so that $x\in Q_x$, $Q_x \cap E= \emptyset,$ and $g(Q_x)=g_x.$ Observe that $g_x >M + g_E(Q_0)$ since $x\notin \mathcal{A}_M.$ Also, if $Q_x^*$ is a cube in $\mathcal{D}(Q_0)$ with $Q_x \in \mathcal{D}( Q_x^*)$ and $g(Q_x^*)= g(Q_x)-1$ (such cube exists because $Q_0 \cap E \neq \emptyset$), we must have $Q_x^* \cap E \neq \emptyset.$ Denoting by $z^*$ the reference point of $Q_x^*$, the inclusion $Q_x^* \subset B \left( z^*, A \theta^{g(Q_x^*)} \right)$ holds and thus
$$
\dist(x,E) \leq \diam\left( B \left( z^*, A \theta^{g(Q_x^*)} \right) \right) \leq   \frac{2A}{\theta}   \theta^{g_x} \leq   \frac{2A}{\theta}   \theta^{M} \theta^{g_E(Q_0)}.
$$
Employing this inequality, we have
\begin{align}\label{estimateessinfmaximaldyadichole1}
\mu \left(  Q_0  \setminus \mathcal{A}_M \right) & \leq \left(   \frac{2A}{\theta}   \theta^{M} \theta^{g_E(Q_0)} \right)^{\alpha} \int_{\left( Q_0 \setminus E \right) \setminus \mathcal{A}_M } \dist(x,E)^{-\alpha} \ud\mu(x) \nonumber \\
& \leq \left(   \frac{2A}{\theta}   \theta^{M} \theta^{g_E(Q_0)} \right)^{\alpha} \mu (Q_0) [w]_{A_1} \essinf_{Q_0} \dist(\cdot,E)^{-\alpha}.
\end{align}
Now, let $Q_{E} \in \mathcal{D}(Q_0)$ with $Q_E \cap E = \emptyset$ and $g(Q_E)=g_E(Q_0)$, and let $z$ be its reference point. Because $B \left( z, a \theta^{g_E(Q_0)} \right) \subset Q_E,$ we have $d(z,E) > a \theta^{g_E(Q_0)},$ and therefore, continuing from \eqref{estimateessinfmaximaldyadichole1} we derive
\begin{align}\label{estimateessinfmaximaldyadichole2}
\mu \left( Q_0  \setminus \mathcal{A}_M \right) & \leq \left(   \frac{2A}{\theta}   \theta^{M} \theta^{g_E(Q_0)} \right)^{\alpha} \mu (Q_0) [w]_{A_1} \dist(z,E)^{-\alpha} \\
& \leq \left(   \frac{2A}{\theta}   \theta^{M} \theta^{g_E(Q_0)} \right)^{\alpha} \mu (Q_0) [w]_{A_1} \left(   a     \theta^{g_E(Q_0)} \right)^{-\alpha} = \left(   \frac{2A}{a \theta}   \theta^{M}  \right)^{\alpha}  [w]_{A_1}  \mu (Q_0). \nonumber 
\end{align}
For every $c\in (0,1),$ we can find $M=M(\theta, [w]_{A_1}, \alpha,c) \in \N$ large enough so that $\left(   \frac{2A}{a \theta}   \theta^{M}  \right)^{\alpha} [w]_{A_1}<1-c,$ obtaining
$$
\mu(\mathcal{A}_M) \geq c \mu(Q_0).
$$
The family of cubes $\widehat{\mathcal{F}}_M(Q_0)=\lbrace Q \in \mathcal{D}(Q_0) \, : \, Q \cap E = \emptyset \, \text{ and }  \, g(Q) \leq M+ g_E(Q_0) \rbrace$ is not necessarily disjoint, but we can make it disjoint by taking a \textit{maximal} subcollection of $\widehat{\mathcal{F}}_M(Q_0).$\footnote{This type of collections of \textit{maximal} cubes will be defined and used in Section \ref{sectiondiscreteinequalityA1} below.} Indeed, given $Q\in \widehat{\mathcal{F}}_M(Q_0),$ let $Q^\sharp \in \widehat{\mathcal{F}}_M(Q_0)$ be a cube with $Q \subset Q^\sharp$ such that
$$
g(Q^\sharp)=\min\lbrace g(Q') \, : \, Q \subset Q' \text{ and } Q'\in \widehat{\mathcal{F}}_M(Q_0) \rbrace.
$$
Notice that $Q \in \mathcal{D}(Q^\sharp)$ for every $Q\in \widehat{\mathcal{F}}_M(Q_0).$ If we define $\mathcal{F}_M(Q_0)=\lbrace Q^\sharp \, : \, Q \in \widehat{\mathcal{F}}_M(Q_0) \rbrace,$ for any two cubes $P, P' \in \mathcal{F}_M(Q_0)$ with non-empty intersection, one has that $P$ and $P'$ are the same cube. Indeed, if $P \subsetneq P',$ then $g(P')<g(P)$ by virtue of Lemma \ref{observationsdyadicsets}. But $P=Q^\sharp$ for some $Q \in \widehat{\mathcal{F}}_M$ and since $P'$ also contains $Q$, the minimality of $g(P)$ would lead us to $g(P') \geq g(P),$ a contradiction. Similarly, one can prove that $P' \subsetneq P$ is impossible, and the properties of the dyadic cubes tell us that $P=P'$ as sets. Moreover, by the minimality of their generations, one has $g(P)=g(P').$ According to Lemma \ref{observationsdyadicsets}, $P$ and $P'$ define the exact same cube. Therefore, the cubes of the family $\mathcal{F}_M(Q_0)$ are mutually disjoint. 

Clearly $\mathcal{A}_M= \bigcup \widehat{\mathcal{F}}_M(Q_0)=\bigcup  \mathcal{F}_M(Q_0).$ Also, every cube $Q \in \mathcal{F}_M $ contains the ball $B_Q:=B\left( z_Q, r   \right),$ with $r=a \theta^{M+g_E(Q_0)}$ and $z_Q$ being the reference point of $Q.$ The balls of the family $\lbrace B_Q \rbrace_{Q \in   \mathcal{F}_M(Q_0)}$ are disjoint and contained in $Q_0$. 
Because $Q_0$ is in turn contained in a ball $B_0$ of radius $R_0= A \theta^{g(Q_0)},$ then $B_0 \subset B(z_Q, 2R_0)$ for each $Q \in \mathcal{F}_M(Q_0)$. The doubling condition \eqref{Cmudoublingmeasure} for $\mu$ gives the estimates $\mu(B_0) \leq C(C_\mu, R_0/r) \mu (B_Q)$ for all $Q \in \mathcal{F}_M(Q_0).$ Thus $  \mathcal{F}_M(Q_0)$ must be a finite collection. This concludes the proof of the dyadic weak porosity of $E,$ since
$$
 \sum_{Q\in  \mathcal{F}_M(Q_0)} \mu(Q) =\mu\left( \bigcup\mathcal{F}_M(Q_0)  \right)= \mu(\mathcal{A}_M)   \geq c \mu(Q_0).
$$

\medskip

\item[$\textbf{(ii)}$] Let $Q_0, Q_0^*\in \mathcal{D}$ be cubes in the same grid with $Q_0 \in \mathcal{D}(Q_0^*)$ with $g(Q_0^*)=g(Q_0)-1.$ In the case where $Q_0 \cap E = \emptyset$, then $g_E(Q_0)=g(Q_0)$ and, because we always have the estimate $g(Q_0^*) \leq g_E(Q_0^*),$ we can write
$$
g_E(Q_0)  = g(Q_0) \leq g(Q_0)-g(Q_0^*) + g_E(Q_0^*) = 1 + g_E(Q_0^*)  .
$$

Suppose now that $Q_0 \cap E \neq \emptyset$. If $x\in Q_0$, there exists $Q \in \mathcal{D}(Q_0)$ with $x\in Q $ and $g(Q)=g_E(Q_0).$ Also, by the assumption, there is $Q^*  \in\mathcal{D}(Q_0)$ so that $Q \in \mathcal{D}(Q^*)$ and $g(Q^*)=g(Q)-1.$ Due to the minimality of $g_E(Q_0),$ the cube $Q^*$ must intersects $E,$ and so 
$$
d(x,E) \leq \diam(Q^*) \leq \diam \left( B(z^*,A \theta^{g(Q^*)}) \right) \leq 2A \theta^{g(Q)-1} = \frac{2A}{\theta} \theta^{g_E(Q_0)};
$$ 
where $z^*$ denotes the reference point of $Q^*.$ This argument permits to write
\begin{align}\label{estimateessinfdyadicparentcondition}
\mu(Q_0) \left( \frac{2A}{\theta} \theta^{g_E(Q_0)} \right)^{-\alpha} & \leq \int_{Q_0} \dist(x,E)^{-\alpha}\ud\mu(x)\leq \int_{Q_0^*} \dist(x,E)^{-\alpha}\ud\mu(x) \nonumber \\
& \leq \mu(Q_0^*) [w]_{A_1} \essinf_{Q_0^*} \dist(\cdot, E)^{-\alpha} \leq \mu (Q_0^*) [w]_{A_1} \left(   a     \theta^{g_E(Q_0^*)} \right)^{-\alpha};
\end{align}
where we argued as in \eqref{estimateessinfmaximaldyadichole1}-\eqref{estimateessinfmaximaldyadichole2} to obtain the last inequality. Also, denoting by $z_0$ and $z_0^*$ the reference points of $Q_0$ and $Q_0^*$, we have the inclusions
$$
B(z_0, a \theta^{g(Q_0)} ) \subset Q_0 \subset Q_0^* \subset B(z_0^*, A \theta^{g(Q_0^*)} )\subset B(z_0, 3 A \theta^{g(Q_0^*)} ).
$$
The doubling condition on the measure gives $\mu(Q_0^*) \leq C(\theta,C_\mu) \mu(\widehat{Q}_0).$ Using this estimate in \eqref{estimateessinfdyadicparentcondition} and reorganizing the terms we conclude
\begin{equation}\label{estimateparentconditiondyadicholes}
g_E(\widehat{Q}_0) \leq m+  g_E(Q_0) ,
\end{equation}
for some $m=m( \alpha,[w]_{A_1},\theta,C_\mu) \in \N.$ 
\end{proof}

As a consequence of the previous lemma, we obtain the implication \ref{maintheoremconditionA1}$\implies$\ref{maintheoremconditionwp} of Theorem \ref{maintheoremallspaces}.
\begin{lemma}\label{resultnecessityweakporosity}
If $E \subset X$ is a subset and $w= \dist( \cdot, E)^{-\alpha} \in A_1(X)$ with $\alpha>0,$ then $E$ is weakly porous and $h_E$ is doubling.
\end{lemma}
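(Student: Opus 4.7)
The proof proposal is essentially a one-line application of the machinery already developed: Lemma \ref{dyadicimplicationsA1} translates the $A_1$ hypothesis into the two dyadic conclusions (dyadic weak porosity plus doubling of $g_E$), and Lemma \ref{assumingdoublingdyadicwpimplieswp} converts those dyadic statements into the desired statements about balls.

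More concretely, the plan is as follows. First, I would invoke Lemma \ref{dyadicimplicationsA1}(ii) with the weight $w = \dist(\cdot,E)^{-\alpha}$ to conclude that $g_E$ is doubling with some constant $m = m(\alpha, [w]_{A_1}, \theta, C_\mu)\in \N$. This immediately triggers Lemma \ref{assumingdoublingdyadicwpimplieswp}(i), giving that $h_E$ is doubling. Next, I would apply Lemma \ref{dyadicimplicationsA1}(i) with any fixed $c \in (0,1)$ (for concreteness, say $c = 1/2$) to obtain that $E$ is dyadic weakly porous, with parameters depending on $\alpha$, $[w]_{A_1}$, $\theta$ and $C_\mu$. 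Since $g_E$ has already been shown to be doubling, we are in the situation of Lemma \ref{assumingdoublingdyadicwpimplieswp}(ii), whose conclusion is precisely that $E$ is weakly porous. This yields both assertions of the lemma.

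There is no significant obstacle here, as the real work was done in Sections \ref{sectiondefinitionswporosity} and \ref{sectionecessitymaintheorem}. The only point that deserves explicit mention is checking the order of application: one must establish the doubling property of $g_E$ \emph{before} trying to conclude weak porosity from dyadic weak porosity, because Lemma \ref{assumingdoublingdyadicwpimplieswp}(ii) requires the doubling hypothesis as input. With that ordering, the proof is a direct concatenation of the two previous lemmas and closes the chain \ref{maintheoremconditionA1}$\Rightarrow$\ref{maintheoremconditionwp} in Theorem \ref{maintheoremallspaces}.
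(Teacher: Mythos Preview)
Your proposal is correct and follows exactly the same route as the paper's own proof: apply Lemma \ref{dyadicimplicationsA1} to obtain dyadic weak porosity and the doubling of $g_E$, then invoke Lemma \ref{assumingdoublingdyadicwpimplieswp} to pass to the ball versions. Your explicit remark about establishing the doubling of $g_E$ before using part (ii) is a helpful clarification, but otherwise there is nothing to add.
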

\begin{proof}
We apply Lemma \ref{dyadicimplicationsA1} to $E,$ obtaining that $E$ is dyadic weakly porous and that $g_E$ is doubling. By Lemma \ref{assumingdoublingdyadicwpimplieswp}, we conclude that $E$ is weakly porous, with $h_E$ doubling. 
\end{proof}

\section{$E$-free dyadic decompositions and a key discrete inequality}\label{sectiondiscreteinequalityA1}

In this section, we begin with the proof of the implication \ref{maintheoremconditionwp}$\implies$\ref{maintheoremconditionA1} in Theorem \ref{maintheoremallspaces}. Suppose that $E$ is weakly porous and that $h_E$ is doubling. By the subsequent comments to Definitions \ref{definitionweakporballs} and \ref{definitionofdoubling}, the same properties hold for $\overline{E}$ as well. Since our goal is to prove that $\dist(\cdot, E)^{-\alpha}$ for some $\alpha>0,$ and $\dist(\cdot, E) = \dist(\cdot, \overline{E})$, we may and do assume that $E$ is closed throughout the Sections \ref{sectiondiscreteinequalityA1} and \ref{sectionsufficiencymaintheorem}.

By Lemma \ref{assumingdoublingwpimpliesdyadicwp}, $E$ is dyadic weakly porous, and $g_E$ is doubling (Definitions \ref{definitiondyadicweakporosity} and \ref{definitionofdoubling}).

Let $M, m \in \N$, $c\in (0,1)$ be the constants from Definitions \ref{definitiondyadicweakporosity} and \ref{definitionofdoubling} for the set $E.$ Because $E$ is (dyadic) weakly porous, for every $P \in \mathcal{D}$ there exists a cube $Q\in \mathcal{D}(P)$ with $Q \cap E= \emptyset.$ Thus, the number $g_E(P)$ is well-defined; see formula \eqref{definitiondyadicmaximalhole}. 

Now, given $P \in \mathcal{D},$ we define the families of cubes
$$
\widehat{\mathcal{F}}_M(P)=\lbrace Q \in \mathcal{D}(P) \: : \: Q \cap E = \emptyset, \: g(Q) \leq M +g_E(P) \rbrace;
$$
$$
\widehat{\mathcal{G}}_M(P)=\lbrace Q \in \mathcal{D}(P) \: : \: Q \subset P \setminus \bigcup_{R \in \widehat{\mathcal{F}}_M(P)} R \rbrace.
$$
Let us construct \textit{maximal} subfamilies of $\widehat{\mathcal{F}}_M(P)$ and $\widehat{\mathcal{G}}_M(P).$ Given $Q\in \widehat{\mathcal{F}}_M(P),$ let $Q^\sharp \in \widehat{\mathcal{F}}_M(P)$ and $Q^\flat \in \widehat{\mathcal{G}}_M(P)$ be cubes with $Q \subset Q^\sharp, Q^\flat$ such that
$$
g(Q^\sharp)=\min\lbrace g(Q') \, : \, Q \subset Q' \text{ and } Q'\in \widehat{\mathcal{F}}_M(P) \rbrace,
$$
$$
g(Q^\flat)=\min\lbrace g(Q') \, : \, Q \subset Q' \text{ and } Q'\in \widehat{\mathcal{G}}_M(P) \rbrace.
$$
We define
$$
\mathcal{F}_M(P):= \lbrace Q^\sharp \, : \, Q \in \widehat{\mathcal{F}}_M(P) \rbrace, \quad \mathcal{G}_M(P)=\lbrace Q^\flat \, : \, Q \in \widehat{\mathcal{G}}_M(P) \rbrace.
$$

It is important to notice that, by the minimality of their generations, any two cubes $R, R'$ of $\mathcal{F}_M(P)$ (or $\mathcal{G}_M(P)$) defining the same set of points must have the same generation. We remind that this implies that $R$ and $R'$ have exactly the same label within the system $\mathcal{D}_t$ to which $P$ belongs, according to Lemma \ref{observationsdyadicsets}. This guarantees that the families $\mathcal{F}_M(P)$ and $\mathcal{G}_M(P)$ consist of non-duplicate cubes.

Let us make several remarks and clarifications concerning the families $\mathcal{F}_M$ and $\mathcal{G}_M.$ 
\begin{remark}\label{propertiesmaximalsubfamiliesFG}
If $P \in \mathcal{D},$ the families $\mathcal{F}_M(P)$ and $\mathcal{G}_M(P)$ satisfy the following.
\begin{enumerate}
\item[(a)] $\mathcal{F}_M(P) \subset  \widehat{ \mathcal{F}}_M(P)$, $\mathcal{G}_M(P) \subset \widehat{ \mathcal{G}}_M(P)$, and $\bigcup \mathcal{F}_M(P) = \bigcup \widehat{\mathcal{F}}_M(P)$, $\bigcup \mathcal{G}_M(P) = \bigcup \widehat{\mathcal{G}}_M(P)$. Moreover, $P$ is the disjoint union of $\bigcup \mathcal{F}_M(P) $ and $\bigcup \mathcal{G}_M(P) $.
\item[(b)] $\mathcal{F}_M(P) \neq \emptyset$ always, and $\mathcal{F}_M(P)=\lbrace P \rbrace$ (or equivalently $P\notin \mathcal{F}_M(P)$) if and only if $P \cap E = \emptyset.$ Also, $\mathcal{G}_M(P) \neq \emptyset  $ if and only if $P \cap E \neq \emptyset.$
\item[(c)] If $R, R' \in \mathcal{F}_M(P)$ (or $R,R'\in \mathcal{G}_M(P)$), then either $R \cap R' = \emptyset$; or else $R$ and $R'$ are the same cube, meaning that they define the same set of points and $g(R)=g(R').$ 
\item[(d)] If $R \in \mathcal{G}_M(P),$ and $R^* \in \mathcal{D}(P)$ is so that $R \subset R^*$ and $g(R^*)=g(R)-1,$ then $R^*$ intersects some $Q \in \mathcal{F}_M(P).$ 
\item[(e)] If $R \in \mathcal{F}_M(P),$ then $R \cap E = \emptyset.$ And if $R \in \mathcal{G}_M(P),$ then $g(R) \leq M  + g_E(P)$ and $R \cap E \neq \emptyset.$
\item[(f)] $\mu\left( \bigcup_{Q \in \mathcal{F}_M(P)} Q  \right)= \sum_{Q \in \mathcal{F}_M(P)} \mu(Q) \geq c \mu(P).$
\end{enumerate}

\end{remark}
\begin{proof}
The first part of (a) is immediate from the definitions of $Q^\sharp$ and $Q^\flat.$ For the second part, observe that for any two cubes $R\in \mathcal{F}_M(P)$ and $R' \in \mathcal{G}_M(P)$. Then also $R \in \widehat{\mathcal{F}}_M(P)$ and $R' \in \widehat{\mathcal{G}}_M(P)$ and the definition of $\widehat{\mathcal{G}}_M(P)$ implies $R' \subset P \setminus \bigcup \widehat{\mathcal{F}}_M(P) \subset P \setminus R.$

The statement (b): $\mathcal{F}_M(P) \neq \emptyset$ for every $P \in \mathcal{D}$ thanks to the (dyadic) weak porosity of $E.$ Now, if $P \cap E \neq \emptyset,$ then $P \notin \widehat{\mathcal{F}}_M(P),$ and therefore $\mathcal{F}_M(P) \neq \lbrace P \rbrace.$ Also, $\bigcup_{R\in \widehat{\mathcal{F}}(P)} R \subsetneq P$ because $P \cap E \neq \emptyset,$ and since all the cubes of $\widehat{\mathcal{F}}_M(P)$ have generation at least $M+g_E(P),$ there must exist $Q \in \mathcal{D}(P)$ with $Q \subset P \setminus E.$ Thus $Q \in\bigcup_{R\in \widehat{\mathcal{F}}(P)} R ,$ and therefore $Q \in \widehat{\mathcal{G}}_M(P),$ and $ \mathcal{G}_M(P) \neq \emptyset .$ And if $P \cap E =\emptyset,$ then, by the minimality of the generations of the cubes in $\mathcal{F}_M(P),$ we must have $\mathcal{F}_M(P) =\lbrace P \rbrace$ and so $\mathcal{G}_M(P) = \emptyset.$

For property (c), suppose that $R,R'\in \mathcal{F}_M(P)$ with $R \cap R' \neq \emptyset.$ Because $R$ and $R'$ are in the same grid $\mathcal{D}_t,$ we have $R \subset R'$ or $R' \subset R.$ Assume for example $R\subset R'.$ Now write $R=Q^\sharp$ and $R'=(Q')^\sharp$, with $Q, Q' \in \widehat{\mathcal{F}}_M(P)$. Because $Q \subset R \subset R',$ by the definition of $Q^\sharp,$ we must have $g(R) \leq g(R').$ This implies $R=R'$ as otherwise we would have $R \subsetneq R'$ and so, by Lemma \ref{observationsdyadicsets}, $g(R) >g(R') ,$ a contradiction. So, $R=R',$ and because $Q' \subset R' =R,$ the definition of $(Q')^\sharp$ gives the reverse inequality $g(R') \leq g(R)$. Lemma \ref{observationsdyadicsets} implies that $R$ and $R'$ are the same cube. The proof for cubes $R,R'\in \mathcal{G}_M(P)$ is identical.

To prove (d), observe that if $R^* \subset P \setminus \bigcup_{Q\in \mathcal{F}_M(P)} Q,$ then $R^* \in \widehat{\mathcal{G}}_M(P)$, contradicting the minimality of $g(R)$.

Let us prove (e). If $R \in \mathcal{F}_M(P),$ then $R \in \widehat{\mathcal{F}}_M(P)$ as well, and so $R \cap E = \emptyset.$ Now assume that $R \in \mathcal{G}_M(P).$ By (b) this implies $P \cap E \neq \emptyset,$ and so $R \subset P \setminus \bigcup \mathcal{F}_M(P) \subsetneq P,$ which in turn yields $g(R) >g(P)$ thanks to Lemma \ref{observationsdyadicsets}. Thus, we can find $R^*\in \mathcal{D}(P)$ with $R \subset R^*$ and $g(R^*)=g(R)-1.$ By property (d), $R^*$ intersects some cube $Q\in \mathcal{F}_M(P),$ and hence either $R^* \subseteq Q$ or $Q \subsetneq R^*.$ Since $R \cap Q = \emptyset$ and $R \subset R^*,$ the first case is impossible. Therefore, we must have $Q \subsetneq R^*,$ yielding
$$
M+g_E(P)\geq  g(Q) \geq g(R^*)+1 = g(R).
$$
Thus $g(R) \leq M+ g_E(P).$ Now, because $R \subset P \setminus \bigcup \mathcal{F}_M(P) ,$ the cube $R \in \mathcal{D}(P)$ is not in $\widehat{\mathcal{F}}_M(P),$ and because $g(R)$ satisfies the estimate $g(R) \leq M+ g_E(P),$ we must necessarily have $R \cap E \neq \emptyset.$

Finally, let us show (f). By the dyadic weak porosity of $E$, there are pairwise disjoint cubes $Q_1, \ldots, Q_N \in \mathcal{D}(P)$ belonging to $\widehat{\mathcal{F}}_M(P)$ and satisfying $\sum_{j=1}^N \mu(Q_j) \geq c \mu(P).$ Each of these cubes $Q_j$ is contained in some cube of the disjointed family $\mathcal{F}_M(P),$ and we can write
$$
\sum_{Q \in \mathcal{F}_M(P)} \mu(Q) =\mu ( \bigcup_{Q\in \mathcal{F}_M(P)} Q  )  \geq \mu \left( \bigcup_{j=1}^N Q_j  \right) = \sum_{j=1}^N \mu(Q_j) \geq c \mu(P).
$$
\end{proof}

\medskip

We continue defining new families of cubes. Fix $P \in \mathcal{D},$ and define
$\mathcal{G}_M^0(P)=\{P \}$, $\mathcal{F}_M^1(P)=\mathcal{F}_M(P)$, $\mathcal{G}_M^1(P)=\mathcal{G}_M(P)$, and in general, for $k=2,3,4,\ldots$, we define
\begin{equation}\label{definitionsFMkGMk}
\mathcal{F}_M^k(P)=\bigcup_{R\in\mathcal{G}_M^{k-1}(P)} \mathcal{F}_M(R),\,\qquad \mathcal{G}_M^k(P)=\bigcup_{R\in\mathcal{G}_M^{k-1}(P)}\mathcal{G}_M(R).
\end{equation}

We now prove that the cubes of the families $\lbrace \mathcal{F}_M^k(P) \rbrace_k$ cover the complement of $E$ in $P.$

\begin{lemma}\label{lemmacoveringwithFfamilies}
For every $P \in \mathcal{D},$ we have $P\setminus E= \bigcup_{k=1}^\infty \bigcup_{Q \in \mathcal{F}_M^k(P)} Q.$
\end{lemma}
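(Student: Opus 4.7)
The plan is to establish the inclusion $\bigcup_{k=1}^\infty \bigcup_{Q \in \mathcal{F}_M^k(P)} Q \subset P \setminus E$ quickly from the listed properties of the family $\mathcal{F}_M$, and then to prove the reverse inclusion by showing that the ``residual'' family $\mathcal{G}_M^k(P)$ has intersection contained in $E$ as $k \to \infty$. The closedness of $E$ (which, as noted at the beginning of Section \ref{sectiondiscreteinequalityA1}, we assume without loss of generality) will be the crucial ingredient at the last step.

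\textbf{Easy inclusion.} Every cube in $\mathcal{F}_M^k(P)$ belongs to $\mathcal{F}_M(R)$ for some $R \in \mathcal{G}_M^{k-1}(P)$ and hence, by Remark \ref{propertiesmaximalsubfamiliesFG}(e), it is disjoint from $E$. Also, a straightforward induction on $k$ using Remark \ref{propertiesmaximalsubfamiliesFG}(a) shows that every cube appearing in $\mathcal{F}_M^k(P) \cup \mathcal{G}_M^k(P)$ is contained in $P$. So $\bigcup_k \bigcup \mathcal{F}_M^k(P) \subset P \setminus E$.

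\textbf{Hard inclusion.} The main step is the identity
\begin{equation*}
P = \bigcup_{j=1}^{k} \Bigl( \bigcup_{Q \in \mathcal{F}_M^j(P)} Q \Bigr) \: \cup \: \bigcup_{R \in \mathcal{G}_M^k(P)} R, \quad \text{for every } k \geq 1,
\end{equation*}
proved by induction on $k$: the case $k=1$ is Remark \ref{propertiesmaximalsubfamiliesFG}(a), and the inductive step is obtained by decomposing each cube of $\mathcal{G}_M^k(P)$ via the same remark applied to it, which is precisely the content of the recursive definition \eqref{definitionsFMkGMk}. A point $x \in P \setminus \bigcup_k \bigcup \mathcal{F}_M^k(P)$ must therefore lie in $\bigcup \mathcal{G}_M^k(P)$ for every $k$; that is, there exists a sequence of cubes $R_k \in \mathcal{G}_M^k(P)$ with $x \in R_k.$

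\textbf{Concluding the argument.} I expect the main (minor) obstacle to be verifying that this sequence is nested with strictly increasing generations, so that its diameters shrink to zero. For this, one observes that $\mathcal{G}_M^k(P)$ is a pairwise disjointed family (inductive use of Remark \ref{propertiesmaximalsubfamiliesFG}(c), together with the fact that cubes in different $\mathcal{G}_M(R)$ arise from disjoint $R \in \mathcal{G}_M^{k-1}(P)$); this forces $R_{k+1} \subset R_k$, since by construction $R_{k+1} \in \mathcal{G}_M(R')$ for some $R' \in \mathcal{G}_M^k(P)$ containing $x$, and disjointness yields $R' = R_k$. Since $R_{k+1} \in \mathcal{G}_M(R_k)$ and each $R_k$ intersects $E$ by Remark \ref{propertiesmaximalsubfamiliesFG}(e) (so that $\bigcup \mathcal{F}_M(R_k) \neq \emptyset$ and $R_{k+1} \subsetneq R_k$), Remark \ref{observationsdyadicsets}(i) gives $g(R_{k+1}) \geq g(R_k)+1$. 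Hence $\mathrm{diam}(R_k) \leq 2 A \theta^{g(R_k)} \to 0$, and picking $y_k \in R_k \cap E$ yields $d(x, y_k) \to 0$. Since $E$ is closed, $x \in E$, contradicting $x \in P \setminus E$ and completing the proof.
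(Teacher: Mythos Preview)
Your proof is correct and follows the same core strategy as the paper: build a strictly nested chain $R_1 \supsetneq R_2 \supsetneq \cdots$ with $R_k \in \mathcal{G}_M^k(P)$, all containing $x$, and exploit the unbounded growth of $g(R_k)$. The only (minor) difference lies in the final contradiction: the paper fixes an $E$-free dyadic cube $Q \ni x$ at the outset (using that $E$ is closed) and derives $g(Q) > M + g(R_k) \to \infty$, whereas you use $\operatorname{diam}(R_k) \to 0$ together with $R_k \cap E \neq \emptyset$ and closedness of $E$ to force $x \in \overline{E} = E$ directly.
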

\begin{proof}
For every $Q \in \mathcal{F}_M^k(P),$ with $k \in \N,$ we have that $Q \in \mathcal{F}_M(R),$ for some $R\in \mathcal{G}_M^{k-1}(P).$ In particular, $Q \cap E =\emptyset$ (see Remark \ref{propertiesmaximalsubfamiliesFG}(e)), showing the inclusion $\bigcup_{k=1}^\infty \bigcup \mathcal{F}_M^k(P) \subset P \setminus E.$

 To prove the reverse inclusion, we may assume that $P \cap E \neq \emptyset,$ as otherwise $P \setminus E=P,$ $\mathcal{F}_M^1(P) =\lbrace P \rbrace$ and $\mathcal{F}_M^k(P) = \emptyset$ for every $k \geq 2.$ Now, let $x\in P \setminus E,$ and let $Q \in \mathcal{D}(P)$ be a cube with $x\in Q$ and $Q \cap E= \emptyset.$ Notice that such cube $Q$ exists because $E$ is closed, and for every $l\in \Z$ with $l \geq g(P),$ $P$ can be written as union of dyadic subcubes of $P$ with diameter at most $2A \theta^l.$

We will next show that $Q \subset \bigcup_{k=1}^\infty \bigcup \mathcal{F}_M^k(P).$ Suppose, for the sake of contradiction, that $Q \not \subset   \bigcup_{k=1}^\infty \bigcup \mathcal{F}_M^k(P).$ In particular, $Q \not \subset \bigcup \mathcal{F}_M^1(P).$ By Remark \ref{propertiesmaximalsubfamiliesFG}(a), there must exist some cube $R_1\in \mathcal{G}_M^1(P)$ intersecting $Q$. And this actually implies that $Q \subsetneq R_1,$ because if $R_1 \subseteq  Q$ the facts that $R_1 \cap E \neq \emptyset$ (see Remark \ref{propertiesmaximalsubfamiliesFG}(e)) and $Q \cap E = \emptyset$ lead us to contradiction. Therefore $Q$ is entirely and strictly contained in $R_1.$ But now $Q \not \subset \bigcup \mathcal{F}_M^2(P) = \bigcup_{R \in \mathcal{G}_M^1(P)}\bigcup \mathcal{F}_M(R)$, and so $Q \not \subset \bigcup \mathcal{F}_M(R_1)$. Using the same argument as above, the cube $Q$ must be strictly contained in some $R_2 \in \mathcal{G}_M(R_1) \subset \mathcal{G}_M^2(P) .$ And since $Q \not \subset \bigcup \mathcal{F}_M^3= \bigcup_{R \in \mathcal{G}_M^2(P)} \bigcup \mathcal{F}_M(R)$, we have $Q \not \subset \bigcup \mathcal{F}_M(R_2).$ Repeating this procedure up to any natural number $k,$ we obtain cubes 
 $$
 R_1 \supset R_2 \supset \cdots \supset R_k \supsetneq Q,
 $$
with $Q \not \subset \bigcup \mathcal{F}_M(R_k)$ and $R_j \in \mathcal{G}_M(R_{j-1})$ for every $j=2,\ldots,k.$ Because $Q$ is strictly contained in $R_k,$ then $g(Q) > g(R_k),$ by Lemma \ref{observationsdyadicsets}, and we can say that $Q \in \mathcal{D}(R_k).$ Moreover, $Q$ is not contained in any $R\in \mathcal{F}_M(R_k)$, and $Q\in \mathcal{D}(R_k)$, $Q \cap E = \emptyset.$ According to the definition of $\mathcal{F}_M(R_k),$ this tells us that $g(Q) > M + g_E(R_k).$ On the other hand, each $R_j$ is strictly contained in $R_{j-1}$ because $R_j \subset R_{j-1} \setminus \bigcup \mathcal{F}_M(R_{j-1}),$ and $\mathcal{F}_M(R_{j-1}) \neq \emptyset$. Thus $g(R_j) \geq g(R_{j-1}) +1$ for every $j,$ and all these observations lead us to
$$
g(Q) > M + g_E(R_k) \geq M +g(R_k) \geq M + g(R_1) + k-1.
$$
Since $k$ is arbitrary, we get a contradiction, showing that $Q \subset \bigcup_{k=1}^\infty \bigcup \mathcal{F}_M^k(P).$ 

\end{proof}

\begin{lemma}\label{keylemmasufficiency}
There exists a constant $\beta=\beta(\theta,c,m,M)\in (0,1)$ depending on $\theta, c,m,M$ such that 
\[
\sum_{k=1}^\infty \sum_{Q\in\mathcal{F}_M^k(Q_0) } \theta^{-  g(Q) \gamma} \mu( Q)
\leq \frac{2}{c} \, \theta^{- g_E(Q_0) \gamma}  \mu( Q_0).
\]
for every cube $Q_0 \in \mathcal{D} $ and every $\gamma \in (0,\beta].$  
\end{lemma}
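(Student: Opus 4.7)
Set
\[U_k:=\sum_{Q\in\mathcal{F}_M^k(Q_0)}\theta^{-g(Q)\gamma}\mu(Q)\quad\text{and}\quad V_k:=\sum_{R\in\mathcal{G}_M^k(Q_0)}\theta^{-g_E(R)\gamma}\mu(R),\]
with $V_0:=\theta^{-g_E(Q_0)\gamma}\mu(Q_0)$. The bound $g(Q)\le M+g_E(R)$ for $Q\in\mathcal{F}_M(R)$ (Remark~\ref{propertiesmaximalsubfamiliesFG}) together with $\sum_{Q\in\mathcal{F}_M(R)}\mu(Q)\le\mu(R)$ immediately yields
\begin{equation}\label{Ukbound}
U_k\le\theta^{-M\gamma}\,V_{k-1},\qquad k\ge 1,
\end{equation}
so the lemma reduces to bounding $\sum_{k\ge 0}V_k$ by an appropriate multiple of $V_0$. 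The plan is to prove a contraction $V_k\le\lambda(\gamma)V_{k-1}$ with $\lambda(\gamma)<1$ uniform in $Q_0$, and then sum the resulting geometric series.

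To isolate the contraction, I would decompose $V_k$ according to the unique parent $R'\in\mathcal{G}_M^{k-1}$ of each $R\in\mathcal{G}_M^k$, and invoke the doubling of $g_E$ \eqref{generalizedgEdoubling} to get $\theta^{-g_E(R)\gamma}\le\theta^{-g_E(R')\gamma}\theta^{-m(g(R)-g(R'))\gamma}$. This gives $V_k\le\sum_{R'\in\mathcal{G}_M^{k-1}}\theta^{-g_E(R')\gamma}S(R')$ with
\[S(R'):=\sum_{R\in\mathcal{G}_M(R')}\theta^{-m(g(R)-g(R'))\gamma}\mu(R),\]
so the task reduces to $S(R')\le\lambda(\gamma)\mu(R')$ uniformly. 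At $\gamma=0$, Remark~\ref{propertiesmaximalsubfamiliesFG}(f) gives $S(R')\le(1-c)\mu(R')$ for free. The difficulty for $\gamma>0$ is that the cubes $R\in\mathcal{G}_M(R')$ at depth $s:=g(R)-g(R')$ carry weight $\theta^{-ms\gamma}$ that grows with $s$, and $s$ can range up to $M+g_E(R')-g(R')$, an unbounded quantity in general.

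The plan for controlling this is to combine a structural observation with Proposition~\ref{furtherpropertiesofdyadicsets}. Every $R\in\mathcal{G}_M(R')$ of depth $s$ has, by maximality, its parent in $\mathcal{D}(R')$ strictly containing some $Q\in\mathcal{F}_M(R')$; hence $R$ lies in the outer boundary layer of $\bigcup\mathcal{F}_M(R')$ of thickness at most $(2A/\theta)\theta^{g(R')+s}$. Applying the $\varepsilon$-boundary measure estimate of Proposition~\ref{furtherpropertiesofdyadicsets} to suitable ancestor cubes in $\mathcal{D}(R')$ that host the $\mathcal{F}_M(R')$-cubes, one should obtain
\[\mu\Bigl(\bigcup\bigl\{R\in\mathcal{G}_M(R'):g(R)=g(R')+s\bigr\}\Bigr)\le C_0\,\theta^{s\eta_0}\mu(R'),\qquad s\ge 1,\]
for constants $C_0,\eta_0>0$ depending only on $\theta$ and $C_\mu$. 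Summing the resulting geometric series shows that for $\gamma<\eta_0/m$ we have $S(R')\le K(\gamma)\mu(R')$ with $K(\gamma)\to(1-c)$ as $\gamma\to 0^+$. Choosing $\gamma$ small makes $\lambda(\gamma):=K(\gamma)\le 1-c/2<1$, and the contraction $V_k\le\lambda(\gamma)V_{k-1}$ follows uniformly in $Q_0$.

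Iterating gives $\sum_{k\ge 0}V_k\le V_0/(1-\lambda(\gamma))$, and combining with \eqref{Ukbound} produces $\sum_{k\ge 1}U_k\le\theta^{-M\gamma}V_0/(1-\lambda(\gamma))$. At $\gamma=0$ the coefficient is $1/c<2/c$, so by continuity one can pick $\beta=\beta(\theta,c,m,M)\in(0,1)$ such that $\theta^{-M\gamma}/(1-\lambda(\gamma))\le 2/c$ for every $\gamma\in(0,\beta]$, finishing the proof. The principal obstacle is the displayed outer-boundary measure estimate: the crude bound $(1-c)\mu(R')$ provides no decay in $s$ and cannot counter the growing weight $\theta^{-ms\gamma}$ when $g_E(R')-g(R')$ is large, so converting the combinatorial containment of the $R$-cubes inside a thin neighborhood of $\bigcup\mathcal{F}_M(R')$ into a quantitative power-type decay in $s$ via Proposition~\ref{furtherpropertiesofdyadicsets} is the heart of the argument.
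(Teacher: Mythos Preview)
Your overall architecture matches the paper's: define $V_k=\sum_{R\in\mathcal{G}_M^{k-1}(Q_0)}\theta^{-g_E(R)\gamma}\mu(R)$, prove $U_k\le\theta^{-M\gamma}V_{k-1}$, establish a contraction $V_k\le\lambda(\gamma)V_{k-1}$, and sum. The gap is in how you obtain the contraction.

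You apply the doubling of $g_E$ in its iterated form \eqref{generalizedgEdoubling}, which costs a factor $\theta^{-m s\gamma}$ depending on the depth $s=g(R)-g(R')$. Since $s$ can be as large as $M+g_E(R')-g(R')$, which is unbounded over the cubes $R'$ arising in the argument, you are forced to seek a compensating decay $\mu\bigl(\bigcup\{R\in\mathcal{G}_M(R'):g(R)=g(R')+s\}\bigr)\le C_0\theta^{s\eta_0}\mu(R')$. This estimate is not a consequence of Proposition~\ref{furtherpropertiesofdyadicsets} (which controls the \emph{inner} boundary layer of a \emph{single} dyadic cube, not the outer layer of an arbitrary union of cubes at various depths), and in fact it is false in general: take $R'=[0,1)^2$ and let $E$ consist of the centers of the black squares in a depth-$L$ checkerboard. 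Then $\mathcal{F}_M(R')$ is the set of white depth-$L$ squares, $\mathcal{G}_M(R')$ is the set of black depth-$L$ squares, all $\mathcal{G}_M(R')$ cubes sit at the single depth $s=L$, and their total measure is $\tfrac12\mu(R')$, with no decay in $L$. Such configurations can occur inside a dyadic weakly porous $E$ with $g_E$ doubling, since $g_E(R')-g(R')$ is not uniformly bounded under those hypotheses.

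The fix, which is exactly what the paper does, is to apply the doubling of $g_E$ only \emph{once}, from $R$ to its dyadic parent $R^*$. By Remark~\ref{propertiesmaximalsubfamiliesFG}(d) the parent $R^*$ strictly contains some $Q\in\mathcal{F}_M(R')$, hence $g_E(R^*)\le g(Q)\le M+g_E(R')$; combined with one step of doubling this gives the uniform bound $g_E(R)\le m+M+g_E(R')$, i.e.\ $\theta^{-g_E(R)\gamma}\le\theta^{-(m+M)\gamma}\theta^{-g_E(R')\gamma}$ with a constant independent of the depth $s$. Then $S(R')\le\theta^{-(m+M)\gamma}\sum_{R\in\mathcal{G}_M(R')}\mu(R)\le\theta^{-(m+M)\gamma}(1-c)\mu(R')$ directly, the contraction factor is $\lambda(\gamma)=\theta^{-(m+M)\gamma}(1-c)$, and no boundary-layer estimate is needed at all.
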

\begin{proof}
Let $\beta \in (0,1)$ be a constant whose value will be specified later. Fix a cube $Q_0 \in \mathcal{D}.$ We will need the following claim.
\begin{claim}\label{claiminduction}
For every natural $k \in \N,$ we have
$$
\sum_{R \in \mathcal{G}_M^{k-1}(Q_0)} \theta^{-g_E(R)\beta} \mu(R)\leq  \left( (\theta^{m+M} )^{-\beta} (1-c) \right)^{k-1} \theta^{-g_E(Q_0)\beta} \mu(Q_0) .
$$
\end{claim}
\begin{proof}[Proof of Claim \ref{claiminduction}]
For $k=1,$ we have $\mathcal{G}_M^{k-1}(Q_0)=\lbrace Q_0 \rbrace,$ and the estimate trivially holds. 
Now, suppose that the inequality holds for $k,$ and let us estimate from above the term $\sum_{R \in \mathcal{G}_M^{k}(Q_0)} \theta^{-g_E(R)\beta} \mu(R)$. Each $R\in \mathcal{G}_M^k(Q_0)$ belongs to $\mathcal{G}_M(P)$ for some $P\in \mathcal{G}_M^{k-1}(Q_0).$ In particular $R\in \mathcal{D}(P),$ and if $R^* \in \mathcal{D}(P)$ with $R \subset R^*$ and $g(R^*)=g(R)-1,$ then $R^* \cap Q \neq \emptyset,$ for some $Q\in \mathcal{F}_M(P)$ by Remark \ref{propertiesmaximalsubfamiliesFG}(d). 
If $R^* \subseteq Q,$ then we have $R \subset R^* \subset \bigcup_{Q'   \in \mathcal{F}_M(P)} Q',$ which contradicts the fact that $R$ belongs to $\mathcal{G}_M(P).$ Thus, we necessarily have $Q \subsetneq R^*.$ We remind that this implies $g(Q) > g(R^*)$ by Lemma \ref{observationsdyadicsets}, and so $Q \in \mathcal{D}(R^*)$. In particular $g_E(R^*) \leq g(Q)$ because $Q \cap E =\emptyset.$ Using that $g_E$ is doubling (see Definition \ref{definitionofdoubling}) and the fact $Q\in \mathcal{F}_M(P)$, we obtain the estimates
\begin{equation}\label{usingthedoublingcondition}
\theta^{g_E(R)} \geq \theta^m \theta^{g_E(R^*)} \geq \theta^m \theta^{g(Q)} \geq \theta^{m+M} \theta^{g_E(P)}.
\end{equation}
On the other hand, the weak porosity condition together with Remark \ref{propertiesmaximalsubfamiliesFG}(a) give
\begin{equation}\label{estimateupperboundmeasure}
\sum_{R \in \mathcal{G}_M(P)} \mu(R) = \mu (P) - \sum_{R \in \mathcal{F}_M(P)} \mu(R) \leq (1-c) \mu(P).
\end{equation}
Using first \eqref{usingthedoublingcondition}, then \eqref{estimateupperboundmeasure}, and finally the induction hypothesis, we derive
\begin{align*}
\sum_{R \in \mathcal{G}_M^{k}(Q_0)} \theta^{-g_E(R)\beta} \mu(R) & = \sum_{P \in \mathcal{G}_M^{k-1}(Q_0)} \sum_{R\in \mathcal{G}_M(P)}  \theta^{-g_E(R)\beta} \mu(R) \\
& \leq (\theta^{m+M} )^{-\beta} \sum_{P \in \mathcal{G}_M^{k-1}(Q_0)} \theta^{-g_E(P)\beta} \sum_{R\in \mathcal{G}_M(P)}   \mu(R) \\
& \leq (\theta^{m+M} )^{-\beta} (1-c) \sum_{P \in \mathcal{G}_M^{k-1}(Q_0)} \theta^{-g_E(P)\beta} \mu(P) \\
& \leq (\theta^{m+M} )^{-\beta} (1-c) \left( (\theta^{m+M} )^{-\beta} (1-c)  \right)^{k-1} \theta^{-g_E(Q_0)\beta} \mu(Q_0) \\
& = \left( (\theta^{m+M} )^{-\beta} (1-c)  \right)^{k}\theta^{-g_E(Q_0)\beta} \mu(Q_0).
\end{align*} 
This shows that the desired estimate holds for $k+1$ as well, and the proof of Claim \ref{claiminduction} is complete. \end{proof}

Now we continue with the proof of Lemma \ref{keylemmasufficiency}. For every $k\in \N,$ we can write
\begin{align*}
\sum_{Q\in\mathcal{F}_M^k(Q_0) } \theta^{-g(Q) \beta} \mu( Q) & = \sum_{R\in\mathcal{G}_M^{k-1} (Q_0) } \sum_{Q\in \mathcal{F}_M(R)} \theta^{-  g(Q) \beta} \mu( Q) \\
& \leq \sum_{R\in\mathcal{G}_M^{k-1}(Q_0) }\theta^{-M\beta} \theta^{-  g_E(R) \beta} \sum_{Q\in \mathcal{F}_M(R)}  \mu( Q) \\
& \leq \theta^{-M\beta}  \sum_{R\in\mathcal{G}_M^{k-1}(Q_0) }\theta^{-  g_E(R) \beta}    \mu( R),
\end{align*}
where in the last inequality we used the fact that the cubes of $\mathcal{F}_M(R)$ are disjoint. Applying Claim \ref{claiminduction} to the last term, we obtain
\begin{align*}
\sum_{k=1}^\infty\sum_{Q\in\mathcal{F}_M^k (Q_0) } \theta^{-g(Q) \beta} \mu( Q) & \leq \theta^{-M\beta} \sum_{k=1}^\infty \sum_{R\in\mathcal{G}_M^{k-1}(Q_0) }  \theta^{-  g_E(R) \beta}    \mu( R) \\
&  \leq   \theta^{-M\beta}   \theta^{-g_E(Q_0)\beta} \mu(Q_0) \sum_{k=1}^\infty \left( (\theta^{m+M} )^{-\beta} (1-c) \right)^{k-1}.
\end{align*}
If the parameter $\beta>0$ is small enough, then $(\theta^{m+M} )^{-\beta} (1-c) <1$ and the constant multiplying $\theta^{-g_E(Q_0)\beta} \mu(Q_0)$ becomes $\theta^{-M\beta} \left( 1- (\theta^{m+M} )^{-\beta} (1-c) \right)^{-1}.$ For sufficiently small $\beta$ this term is bounded by $2/c.$ 
\end{proof}

\section{Proof of Theorem \ref{maintheoremallspaces}}\label{sectionsufficiencymaintheorem}

In this section we prove the implication \ref{maintheoremconditionwp}$\implies$\ref{maintheoremconditionA1} of Theorem \ref{maintheoremallspaces}. Let $E$ be a weakly porous set such that $h_E$ is doubling. As pointed out in Section \ref{sectiondiscreteinequalityA1}, Lemma \ref{assumingdoublingwpimpliesdyadicwp} then implies that $E$ is dyadic weakly porous and $g_E$ is doubling, which allows us to use Lemma \ref{keylemmasufficiency}. Also, $E$ can be assumed to be closed, as we explained in the beginning of Section \ref{sectiondiscreteinequalityA1}.

In the next three subsections, we are going to prove the $A_1$-estimate \eqref{e.a_1} for the weight $\dist(\cdot,E)^{-\gamma}$ for suitable $\gamma>0,$ for all the cubes of the systems $\mathcal{D}$ instead of balls. Then, a simple argument will allow us to deduce \eqref{e.a_1} for every ball of $X.$

\subsection{$E$-free cubes that are far from $E$} \label{sufficiencyestimatesfarcubes}
Let $Q_0 \in \mathcal{D}$ such that $Q_0 \cap E= \emptyset,$ and $\dist(Q_0,E) \geq 2 \diam(Q_0).$ Observe that, for every $x,y\in Q_0,$ we have $\dist(y,E) \leq 2 \dist(x,E)$. Also, if $z_0$ denotes the reference point of $Q_0,$ because $Q_0 \cap E = \emptyset,$ the ball $B(z_0, a \theta^{g(Q_0)})$ does not contain any point of $E,$ and so $\dist(z_0, E) \geq a\theta^{g(Q_0)}.$  These observations lead us to the estimates 
\begin{equation}\label{A1estimateforcubesfar}
\intav_{Q_0} \dist(x, E)^{-\gamma}\ud\mu \leq 2^\gamma \essinf_{y\in Q_0} \dist(y, E)^{-\gamma} \leq 2^\gamma \dist(z_0,E)^{-\gamma} \leq (2/a)^\gamma \theta^{-g(Q_0) \gamma} ,
\end{equation}
for every $\gamma >0.$
\subsection{$E$-free cubes that are close to $E$}\label{sufficiencyestimatesclosecubes}
Let $Q_0 \in \mathcal{D}$ such that $Q_0 \cap E= \emptyset,$ $\dist(Q_0,E) < 2 \diam(Q_0).$ Notice that, since $E$ is closed, $\dist(x,E) >0$ for every $x\in Q_0.$ Also, we have that $E \subset X \setminus Q_0,$ and, if $x\in Q_0$, 
$$
\dist(x, X \setminus Q_0) \leq \dist(x,  E) < 3 \diam(Q_0) \leq 6 A \theta^{g(Q_0)}.
$$

Now we define the sets $A_j=\lbrace x\in Q_0 \, : \, 6 A\theta^{j+1}< \dist(x, E) \leq 6 A\theta^j \rbrace$, for every $j \in \Z$ with $j \geq g(Q_0)$. By the preceding remarks, $Q_0 = \bigcup_{j \geq g(Q_0)} A_j$, and the union is of course disjoint. Also, employing the estimate \eqref{estimatemeasureboundarydyadicsets} from Proposition \ref{furtherpropertiesofdyadicsets} with $t=6 A \theta^{j-g(Q_0)}$, we get positive constants $C$ and $\eta$ depending only on $C_\mu$ and $\theta$ so that
$$
\mu(A_j) \leq \mu \left( \lbrace x\in Q_0 \, : \, \dist(x, X \setminus Q_0) \leq 6 A\theta^j \rbrace \right) \leq C(6A)^\eta \theta^{(j-g(Q_0)) \eta} \mu(Q_0).
$$
We momentarily use $C$ to denote this constant, which should not be confused with any of the previously used $C$'s in the doubling condition for $h_E$. All these properties lead us to
\begin{align*}
\int_{Q_0} \dist(x, E)^{-\gamma} \ud\mu &   =\sum_{j=g(Q_0)}^\infty \int_{A_j} \dist(x, E)^{-\gamma}  \ud\mu \leq \sum_{j=g(Q_0)}^\infty (6A \theta^{j+1})^{-\gamma} \mu(A_j)\\
& \leq   C (6A)^\eta (6A\theta)^{-\gamma} \theta^{-g(Q_0) \eta} \mu(Q_0)   \sum_{j=g(Q_0)}^\infty \theta^{j(\eta-\gamma)}.
\end{align*}
Assuming $\gamma< \eta$, the series converges and we obtain the estimate
\begin{equation}\label{partialA1estimatesufficiency}
\int_{Q_0} \dist(x, E)^{-\gamma} \ud\mu \leq \kappa(\theta, C_\mu,\gamma) \theta^{-g(Q_0)\gamma} \mu(Q_0).
\end{equation}
Since $\dist(Q_0,E) < 2 \diam(Q_0),$ we have that $\dist(x,E) \leq 3 \diam(Q_0)$ for every $x \in Q_0,$ and so
$$
\essinf_{Q_0} \dist(\cdot,E)^{-\gamma} \geq (3 \diam(Q_0))^{-\gamma} \geq (6A \theta^{g(Q_0)})^{-\gamma}.
$$
Combining this estimate with \eqref{partialA1estimatesufficiency}, we conclude 
\begin{equation}\label{conclusionsubsectionclosecubes}
\vint_{Q_0} \dist(x, E)^{-\gamma} \ud\mu \leq \kappa(\theta, C_\mu,\gamma) \essinf_{Q_0} \dist(\cdot,E)^{-\gamma} .
\end{equation}

\subsection{Non-$E$-free cubes}\label{sectionestimatesnonfree}

Fix $Q_0 \in \mathcal{D}$ with $Q_0 \cap E \neq \emptyset$. Given $x\in Q_0 \setminus E,$ we have that $\dist(x,E)>0$ because $E$ is closed. For $l\in \Z$, $l\geq g(Q_0)$ so that $2 A \theta^l < \dist(x,E),$ we can write $Q_0$ as union of subcubes $Q \in \mathcal{D}(Q_0)$ with $g(Q) = l.$ The point $x$ must be contained in some of these cubes, say $Q,$ and because $\diam(Q) \leq 2A \theta^l,$ we have $Q \cap E = \emptyset.$

This allows us to find $Q_x\in \mathcal{D}(Q_0)$ with $x\in Q_x,$ $ Q_x \cap E= \emptyset$, and 
$$
g(Q_x)=\min\lbrace g(Q) \, : \, x\in Q, \, Q\in \mathcal{D}(Q_0),  \text{ and } Q \cap E= \emptyset \rbrace.
$$
Notice that $g(Q_x) \geq g_E(Q_0).$ Also, because $Q_x \subsetneq Q_0$ (since $Q_0 \cap E \neq \emptyset= Q_x \cap E$), we can find $Q_x^*\in \mathcal{D}(Q_0)$ with $Q_x \subset Q_x^*$ and $g(Q_x^*)= g(Q_x)-1.$ By the minimality of $g(Q_x),$ the cube $Q_x^*$ must intersect $E.$ Then, if $z^*$ is the reference point of $Q_x^*$, the containing ball $B \left( z^*, A \theta^{g(Q_x^*)} \right)$ of $Q_x^*$ also intersects $E,$ and we derive
$$
\dist(x,E) \leq \diam\left( B \left( z^*, A \theta^{g(Q_x^*)} \right) \right) \leq   \frac{2A}{\theta}   \theta^{g(Q_x)} \leq   \frac{2A}{\theta}   \theta^{g_E(Q_0)}.
$$
Consequently, the following estimate holds:
\begin{equation}\label{estimateforessinfsuficieny}
\essinf_{x\in Q_0} \dist(x,E)^{-\gamma} \geq  \left( \frac{\theta}{2A} \right)^\gamma \theta^{-g_E(Q_0) \gamma}.
\end{equation}
On the other hand, using Lemma \ref{lemmacoveringwithFfamilies} and the fact that all the cubes in the family $\lbrace Q \, : \, Q \in \mathcal{F}_M^k(Q_0), \, k\in \N \rbrace$ are mutually disjoint, we can write
$$
\int_{Q_0 \setminus E} \dist(x,E)^{-\gamma} \ud\mu  =  \sum_{k=1}^\infty \sum_{Q \in \mathcal{F}_M^k(Q_0)} \int_{Q}\dist(x,E)^{-\gamma} \ud\mu .
$$
Now, each cube $Q \in \mathcal{F}_M^k(Q_0)$ does not intersect $E;$ see the definition of this families in \eqref{definitionsFMkGMk} together with Remark \ref{propertiesmaximalsubfamiliesFG}(e). Thus, we can estimate from above each of the integrals in the previous series, by applying either \eqref{A1estimateforcubesfar} or \eqref{partialA1estimatesufficiency} depending on whether the cube $Q \in \mathcal{F}_M^k(Q_0)$ satisfies the assumption of Subsections \ref{sufficiencyestimatesfarcubes} or \ref{sufficiencyestimatesclosecubes}. Notice that \eqref{A1estimateforcubesfar} holds for any positive $\gamma,$ whereas \eqref{partialA1estimatesufficiency} holds if $\gamma < \eta.$ Hence, taking $\gamma < \eta,$ the preceding series can be estimated from above by
$$
 \sum_{k=1}^\infty \sum_{Q \in \mathcal{F}_M^k(Q_0)} \kappa(\theta,C_\mu,\gamma) \theta^{-g(Q)\gamma} \mu(Q).
$$
Now, if $\beta=\beta(\theta,c,m,M) >0$ is the number from Lemma \ref{keylemmasufficiency}, we take further $\gamma < \beta,$ and the same lemma gives the estimate
$$
 \sum_{k=1}^\infty \sum_{Q \in \mathcal{F}_M^k(Q_0)} \kappa(\theta,C_\mu,\gamma) \theta^{-g(Q)\gamma} \mu(Q) \leq \kappa(\theta,C_\mu, \gamma,c)\theta^{-g_E(Q_0) \gamma} \mu( Q_0).
 $$ 
Recall that $M,c$ are the constants of dyadic weak porosity of $E,$ and that $m$ is the doubling constant for $g_E.$ Together with \eqref{estimateforessinfsuficieny}, these estimates yield
\begin{equation}\label{conclusionsubsectionnonfreecubes}
\intav_{Q_0} \dist(x,E)^{-\gamma} \ud\mu \leq \kappa(\theta,C_\mu,\gamma,c) \essinf_{x\in Q_0} \dist(x,E)^{-\gamma}.
\end{equation}

Notice that we have used the fact that $\mu(E)=0,$ which follows from the weak porosity condition for $E;$ see Definition \ref{definitionweakporballs} and the subsequent comment.

\subsection{The $A_1$-estimate for all balls of $X$}

Collecting the estimates from the three previous subsections, we obtain the following theorem, which is the implication \ref{maintheoremconditionwp}$\implies$\ref{maintheoremconditionA1} in Theorem \ref{maintheoremallspaces}.

\begin{theorem}\label{thm:rightimplicationmaintheorem}
Let $E$ be a weakly porous set with constants $c,\delta$ and so that $h_E$ is doubling with constant $C.$ Then there exists $\gamma=\gamma(\theta,C_\mu,c,\delta,C) >0$ so that $\dist(\cdot, E)^{-\gamma} \in A_1(X).$ The $A_1$-constant of the weight $\dist(\cdot, E)^{-\gamma}$ depends on $\theta,C_\mu,\gamma,c.$
\end{theorem}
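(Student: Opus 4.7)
The plan is to assemble the three dyadic $A_1$-type estimates already established in Subsections \ref{sufficiencyestimatesfarcubes}--\ref{sectionestimatesnonfree}, and then to transfer the resulting bound from dyadic cubes to arbitrary balls.

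First I would fix the exponent $\gamma = \gamma(\theta, C_\mu, c, \delta, C) \in (0, \min\{\beta, \eta\})$, where $\eta>0$ is the boundary-measure exponent from Proposition \ref{furtherpropertiesofdyadicsets} and $\beta \in (0,1)$ is the constant produced by Lemma \ref{keylemmasufficiency}. Lemma \ref{keylemmasufficiency} is indeed applicable because Lemma \ref{assumingdoublingwpimpliesdyadicwp} converts the hypotheses (weak porosity of $E$ together with doubling of $h_E$) into their dyadic counterparts: $E$ is dyadic weakly porous, and $g_E$ is doubling. Setting $w = \dist(\cdot, E)^{-\gamma}$, the three dyadic subsections then deliver, uniformly over $Q_0 \in \mathcal{D}$, the bound $\vint_{Q_0} w \, d\mu \leq C_0 \essinf_{Q_0} w$, with $C_0 = C_0(\theta, C_\mu, \gamma, c)$: the $E$-free far case is \eqref{A1estimateforcubesfar}, the $E$-free close case is \eqref{conclusionsubsectionclosecubes}, and the non-$E$-free case is \eqref{conclusionsubsectionnonfreecubes}.

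Next I would transfer the estimate to arbitrary balls. Given $B = B(x_0, r) \subset X$, Proposition \ref{theoremexistencedyadicgrids}(4) supplies a cube $Q \in \mathcal{D}$ with $B \subset Q$ and $\theta^{g(Q)+2} < r \leq \theta^{g(Q)+1}$, while Proposition \ref{theoremexistencedyadicgrids}(3) gives $Q \subset B(z_Q, A\theta^{g(Q)})$. Since $x_0 \in B \subset Q$, the triangle inequality yields $Q \subset B(x_0, 2A\theta^{g(Q)}) \subset B(x_0, 2A\theta^{-2} r)$, so that the doubling condition \eqref{Cmudoublingmeasure} for $\mu$ furnishes a constant $C_1 = C_1(\theta, C_\mu)$ with $\mu(Q) \leq C_1 \mu(B)$. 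Combining this with the inclusion $B \subset Q$ (which gives $\essinf_Q w \leq \essinf_B w$), we obtain
\[
\vint_B w \, d\mu \;\leq\; \frac{\mu(Q)}{\mu(B)} \vint_Q w \, d\mu \;\leq\; C_1 C_0 \essinf_Q w \;\leq\; C_1 C_0 \essinf_B w,
\]
which is the required $A_1$-estimate over an arbitrary ball in $X$.

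The genuine obstacle has already been negotiated in Sections \ref{sectiondiscreteinequalityA1} and \ref{sectionsufficiencymaintheorem}: namely, the geometric-series argument behind Lemma \ref{keylemmasufficiency}, which controls $\sum_k \sum_{Q\in \mathcal{F}_M^k(Q_0)} \theta^{-g(Q)\gamma}\mu(Q)$ by $\theta^{-g_E(Q_0)\gamma}\mu(Q_0)$ provided $\gamma$ is small, together with Proposition \ref{furtherpropertiesofdyadicsets}, which handles the local integrability of $\dist(\cdot,E)^{-\gamma}$ near $E$; jointly these pin down the admissible range for $\gamma$. The cube-to-ball passage described above is then essentially mechanical and costs only a factor coming from doubling of $\mu$.
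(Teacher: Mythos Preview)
Your proposal is correct and follows essentially the same route as the paper: invoke Lemma \ref{assumingdoublingwpimpliesdyadicwp} to pass to the dyadic setting, combine the three case estimates \eqref{A1estimateforcubesfar}, \eqref{conclusionsubsectionclosecubes}, \eqref{conclusionsubsectionnonfreecubes} to obtain the $A_1$-inequality over every $Q\in\mathcal{D}$, and then transfer to arbitrary balls via Proposition \ref{theoremexistencedyadicgrids}(3)--(4) together with the doubling of $\mu$. The choice $\gamma<\min\{\beta,\eta\}$ and the chain $\vint_B w \le C_1\vint_Q w \le C_1 C_0 \essinf_Q w \le C_1 C_0 \essinf_B w$ are exactly what the paper does.
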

\begin{proof}
By Lemma \ref{assumingdoublingwpimpliesdyadicwp}, $E$ is dyadic weakly porous with some new constants $c'=c'(\theta,C_\mu,c)$ and $M=M(\theta,\delta,C)$, and $g_E$ is doubling with constant $m=m(\theta,C) \in \N.$

We combine the conclusions of Subsections \ref{sufficiencyestimatesfarcubes}, \ref{sufficiencyestimatesclosecubes}, and \ref{sectionestimatesnonfree}, i.e., \eqref{A1estimateforcubesfar}, \eqref{conclusionsubsectionclosecubes} and \eqref{conclusionsubsectionnonfreecubes}. We deduce that, for $\gamma=\gamma(\theta,C_\mu,c',M,m) >0$ small enough, and for every cube $Q\in \mathcal{D},$ the estimate
$$
\intav_{Q} \dist( \cdot, E)^{-\gamma} \ud\mu \leq \kappa(\theta,C_\mu,\gamma,c') \essinf_Q \dist(\cdot, E)^{-\gamma}
$$
holds true. Now, given any open ball $B=B(x,R) \subset X,$ Proposition \ref{theoremexistencedyadicgrids} provides a cube $Q\in \mathcal{D}$ with $B \subset Q$ and $\theta^{g(Q)+2} < R \leq \theta^{g(Q)+1}.$ If $z$ denotes the reference point of $Q,$ we have the inclusion $Q \subset B(z, A \theta^{g(Q)}) $. Because $x$ is contained in this ball and $\theta^{g(Q)+2} < R,$ the triangle inequality gives $B(z,A  \theta^{g(Q)}) \subset B(x, 2 A \theta^{-2} R).$ Hence, the doubling condition for $\mu$ yields $\mu(Q) \leq C(\theta, C_\mu) \mu(B).$ These observations permit to write
\begin{align*}
 \intav_B  \dist( \cdot, E)^{-\gamma} \ud\mu  & \leq \kappa(\theta, C_\mu) \intav_Q  \dist( \cdot, E)^{-\gamma} \ud\mu \\
 & \leq \kappa(\theta,C_\mu,\gamma,c') \essinf_Q \dist(\cdot, E)^{-\gamma} \leq \kappa(\theta,C_\mu,\gamma,c) \essinf_B \dist(\cdot, E)^{-\gamma} .
\end{align*}

\end{proof}

\begin{proof}[Proof of Theorem \ref{maintheoremallspaces}]
It now suffices to combine Lemma \ref{resultnecessityweakporosity} and Theorem \ref{thm:rightimplicationmaintheorem}
\end{proof}

\section{Proof of Theorem \ref{maintheoremannular} and a bonus characterization}\label{sectionproofmainthmannularabsolute}

We now prove that in spaces with a slightly weaker variant of the standard annular decay property, the weak porosity condition implies the doubling property of the maximal holes. This permits to complete the proof of Theorem \ref{maintheoremannular}. Then, without the assumption of the variant of the annular decay property, we give a sufficient condition for a dyadic weakly porous set $E$ to imply the doubling property for its dyadic maximal free holes $g_E,$ and this will lead us to another characterization of the property $\dist(\cdot,E)^{-\alpha}\in A_1(X)$ for some $\alpha>0.$  

\subsection{Admissible balls with small radii} The following lemma of rescaling for the parameter $\delta$ in the definition of weak porosity is valid in any metric space with a doubling measure. It essentially tells us that one can replace the parameters $(c,\delta)$ in the definition of weak porosity with $(c \, C_\mu^{-2}, \delta')$ for $\delta'$ as small as desired, and so that the admissible balls have arbitrarily small radii.  

\begin{lemma}\label{rescalinglemma}
Suppose $E$ is weakly porous with constants $c, \delta \in (0,1).$ Then, for every $\tau \in (0,1/2),$ there exists $\delta_\tau \in (0,1)$ depending on $\delta$ and $\tau,$ for which the following holds.

Given a ball $B_0=B(x_0,R_0) \subset X,$ there exists a finite collection of disjoint balls $\lbrace B(z_n,s_n) \rbrace_n$ contained in $B_0 \setminus E,$ with 
$$
\tau R_0 \geq s_n \geq \delta_\tau h_E(B_0) \quad \text{and} \quad \sum_n \mu( B(z_n,s_n) ) \geq c \, C_\mu^{-2}  \mu(B_0).
$$
\end{lemma}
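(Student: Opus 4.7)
The plan is to first invoke the weak porosity of $E$ on the ball $B_0$, producing a pairwise disjoint admissible collection $\{B_i=B(x_i,r_i)\}_{i=1}^N$ contained in $B_0\setminus E$ with $r_i\in[\delta h_E(B_0),2R_0]$ and $\sum_i\mu(B_i)\geq c\,\mu(B_0)$. Those $B_i$ with $r_i\leq \tau R_0$ are already of the right size and I would keep them unchanged. For each of the remaining balls, those with $r_i>\tau R_0$, I would replace $B_i$ by a disjoint packing of much smaller subballs of fixed radius $s=\tau R_0/2$, arranged so that this replacement preserves a definite fraction of $\mu(B_i)$.

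The technical core is the following sub-packing lemma: for any ball $B(x,r)\subset X$ and any $s\in(0,r/2]$, there is a finite disjoint family $\{B(y_j,s)\}_j$ of subballs of $B(x,r)$ with $\sum_j\mu(B(y_j,s))\geq C_\mu^{-2}\mu(B(x,r))$. To prove it I would pick $\{y_j\}$ as a maximal $2s$-separated subset of $\overline{B(x,r-s)}$: the balls $B(y_j,s)$ are then disjoint subballs of $B(x,r)$, and by maximality the dilated balls $B(y_j,2s)$ cover $B(x,r-s)$. Two successive applications of the doubling condition \eqref{Cmudoublingmeasure} then yield $\sum_j\mu(B(y_j,s))\geq C_\mu^{-1}\mu(B(x,r-s))\geq C_\mu^{-2}\mu(B(x,r))$, where $s\leq r/2$ is used in the second inequality (so that $r\leq 2(r-s)$). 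Finiteness of the collection is automatic since each $\mu(B(y_j,s))$ admits a positive lower bound depending only on $\mu(B(x,r))$ and $r/s$ through the doubling constant.

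Applied to each $B_i$ with $r_i>\tau R_0$ using the choice $s=\tau R_0/2$, the hypothesis $\tau<1/2$ combined with $r_i>\tau R_0$ guarantees $s\leq r_i/2$, so the sub-packing lemma produces disjoint subballs of radius $\tau R_0/2$ lying inside $B_i\subset B_0\setminus E$ with total measure at least $C_\mu^{-2}\mu(B_i)$. Because $h_E(B_0)\leq 2R_0$ by the definition \eqref{definitionmaxholeballs}, these new radii satisfy $\tau R_0/2\geq(\tau/4)h_E(B_0)$. Concatenating the kept balls (radius in $[\delta h_E(B_0),\tau R_0]$) with the sub-packed balls (radius $\tau R_0/2$) would yield a disjoint collection whose radii all lie in $[\delta_\tau h_E(B_0),\tau R_0]$ with $\delta_\tau:=\min\{\delta,\tau/4\}$, and whose total measure is at least $C_\mu^{-2}\sum_i\mu(B_i)\geq c\,C_\mu^{-2}\mu(B_0)$, as required.

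The main obstacle, though essentially bookkeeping, is the careful tracking of the two doubling losses that together produce the $C_\mu^{-2}$ factor in the conclusion, together with the verification that $s\leq r_i/2$ holds uniformly across Case B; the hypothesis $\tau\in(0,1/2)$ is precisely what makes the sub-packing argument valid for every admissible ball simultaneously.
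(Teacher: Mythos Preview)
Your proposal is correct and uses essentially the same idea as the paper: sub-packing admissible balls via a maximal separated net and invoking the doubling condition twice to retain a $C_\mu^{-2}$ fraction of the measure. The only difference is cosmetic: the paper sub-packs \emph{every} admissible ball $B(x_j,r_j)$ with balls of radius $s_j=\tau r_j/2$ (radii scaled proportionally to $r_j$ rather than fixed at $\tau R_0/2$), which avoids your case split entirely and yields $\delta_\tau=\tau\delta/2$ in place of your $\min\{\delta,\tau/4\}$.
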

\begin{proof}
Let $\lbrace B(x_j,r_j) \rbrace_j$ be a collection of disjoint balls as in Definition \ref{definitionweakporballs} for the ball $B_0.$ For every $j,$ let $\lbrace z_j^i \rbrace_i$ be a finite $\tau r_j$-maximal net of the relatively compact set $B(x_j,(1-\tau) r_j).$ This means that $z_j^i \in B(x_j,(1-\tau) r_j)$ for every $i,j$; that $B(x_j,(1-\tau) r_j) \subset \bigcup_i B(z_j^i, \tau r_j)$ for every $j$; and the balls $\lbrace B(z_j^i, \frac{\tau r_j}{2}) \rbrace_i$ are pairwise disjoint. In addition, the triangle inequality gives $\bigcup_i B(z_j^i, \tau r_j) \subset B(x_j,r_j)$ for every $j.$ Define $s_j=\frac{\tau r_j}{2}$ and $\delta_\tau=\frac{\tau \delta}{2}.$ Since the balls of the collection $\lbrace B(x_j,r_j) \rbrace_j$ are pairwise disjoint and are contained in $B_0 \setminus E,$ the new family $\lbrace B(z_j^i, s_j) \rbrace_{i,j}$ satisfies these two properties as well. Also, their radii $s_j$ satisfy 
$$
 \tau R_0 \geq  s_j \geq \frac{\tau \delta}{2} h_E(B_0) = \delta_\tau h_E(B_0),
$$
and the first claim is proven. As concerns the estimate for the measures, we write
\begin{align*}
c \mu(B_0) &  \leq \sum_j \mu(B(x_j,r_j)) \leq C_\mu \sum_j \mu(B(x_j,(1-\tau)r_j)) \\
& \leq  C_\mu \sum_j \sum_i \mu(B(z_j^i, \tau r_j)) \leq C_\mu^2  \sum_{j,i} \mu(B(z_j^i, s_j)).
\end{align*}
Dividing by $C_\mu^2$ in both sides, we get the desired inequality. 

\end{proof}

\subsection{Measures with annular decay properties}\label{sectionannulardecay}
Assume that the space $(X, d, \mu)$ satisfies the following \textit{annular decay property}: for every $\eta>0,$ there exists $\varepsilon \in (0,1)$ such that
\begin{equation}\label{measureannulardecayproperty}
\sup_{x\in X,\, 0<t\leq s, \, 1- \frac{t}{s}\leq \varepsilon} \frac{\mu\left(  B(x,s) \setminus B(x,t)   \right)}{\mu(B(x,s))} < \eta.
\end{equation}

The standard annular decay property with power-type rate of decay (see e.g., \cite[Definition 1.1]{BBL17}) is slightly stronger than \eqref{measureannulardecayproperty}. In this setting, if $E$ is weakly porous, then $h_E$ is doubling.

\begin{lemma}\label{lemmaannulardecayweaklyporousdoubling}
Let $E \subset X$ be a weakly porous set, with constants $c, \delta \in (0,1).$

\begin{enumerate}[label=\textup{(\roman*)}]
\item There exist constants $\lambda^* \in (1,2)$ and $\delta^* \in (0,1)$ depending on $c,\delta,C_\mu$ and the parameters from \eqref{measureannulardecayproperty}, for which $h_E(B(x_0,R_0)) \geq \delta^* h_E(B(x_0, \lambda^* R_0))$ for all $x_0 \in X,$ $R_0>0.$

\item There are positive constants $b=b(\delta^*)$ and $d=d(\delta^*, \lambda^*)$ such that, for every $x_0\in X$, $R_0>0,$ and $L\geq 1,$ we have
$$
h_E(B(x_0, L R_0)) \leq b L^d h_E(B(x_0,R_0)).
$$
In particular, $h_E$ is doubling. 
\end{enumerate}
\end{lemma}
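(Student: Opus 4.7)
The plan for (i) is to build an admissible ball for $h_E(B_0)$ out of the collection furnished by weak porosity applied to a slightly dilated ball $\lambda^* B_0$, and use \eqref{measureannulardecayproperty} to guarantee that at least one of those balls survives entirely inside $B_0$. Fix $B_0=B(x_0,R_0)$ and set $\eta=\tfrac12 c\,C_\mu^{-2}$; invoke \eqref{measureannulardecayproperty} to pick $\varepsilon\in(0,1)$ so that any annulus with $1-t/s\leq\varepsilon$ has $\mu$-measure less than $\eta\,\mu(B(x,s))$. Then choose $\tau\in(0,1/2)$ and $\lambda^*\in(1,2)$ small enough that $1-\tfrac{1}{\lambda^*}+2\tau\leq\varepsilon$ (for instance $\tau=\varepsilon/4$ and $\lambda^*=1/(1-\varepsilon/4)$). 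Apply Lemma~\ref{rescalinglemma} to $\lambda^*B_0$ with this $\tau$, obtaining a disjoint collection $\{B(z_n,s_n)\}$ contained in $\lambda^*B_0\setminus E$ with $s_n\leq\tau\lambda^*R_0$, $s_n\geq\delta_\tau h_E(\lambda^*B_0)$, and $\sum_n\mu(B(z_n,s_n))\geq c\,C_\mu^{-2}\mu(\lambda^*B_0)$.

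Next I split the family into those balls contained in $B_0$ (\emph{good}) and those not (\emph{bad}). For a bad ball, $d(z_n,x_0)>R_0-s_n$, so every point of $B(z_n,s_n)$ lies at distance greater than $R_0-2s_n\geq(1-2\tau\lambda^*)R_0$ from $x_0$; hence all bad balls sit inside the annulus $B(x_0,\lambda^*R_0)\setminus B(x_0,(1-2\tau\lambda^*)R_0)$. By the choice of parameters, $1-(1-2\tau\lambda^*)R_0/(\lambda^*R_0)=1-1/\lambda^*+2\tau\leq\varepsilon$, so by \eqref{measureannulardecayproperty} and disjointness the total measure of bad balls is at most $\eta\,\mu(\lambda^*B_0)<c\,C_\mu^{-2}\mu(\lambda^*B_0)$. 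Therefore a good ball $B(z,s)$ must exist; it lies in $B_0\setminus E$, its radius satisfies $s<R_0\leq 2R_0$, and $s\geq\delta_\tau h_E(\lambda^*B_0)$. From the definition of $h_E(B_0)$ we conclude $h_E(B_0)\geq\delta_\tau h_E(\lambda^*B_0)$, and we set $\delta^*:=\delta_\tau$.

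For (ii), iterate (i): for every integer $n\geq 0$,
\[
h_E(B(x_0,(\lambda^*)^n R_0))\leq (\delta^*)^{-n}\,h_E(B(x_0,R_0)).
\]
Given $L\geq 1$, choose the smallest integer $n$ with $(\lambda^*)^n\geq L$, so $n\leq 1+\log L/\log\lambda^*$. The concentric monotonicity of $h_E$ is immediate from its definition: any pair $(y,r)$ admissible for the smaller concentric ball is admissible for any larger concentric ball. Hence
\[
h_E(B(x_0,LR_0))\leq h_E(B(x_0,(\lambda^*)^nR_0))\leq (\delta^*)^{-n}h_E(B(x_0,R_0))\leq \frac{1}{\delta^*}\,L^{\log(1/\delta^*)/\log\lambda^*}\,h_E(B(x_0,R_0)),
\]
which gives the claim with $b=1/\delta^*$ and $d=\log(1/\delta^*)/\log\lambda^*$. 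Doubling of $h_E$ is then just the case $L=2$.

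The main obstacle is the calibration of $\tau$ and $\lambda^*$ in (i): the admissible balls must be simultaneously small enough (relative to $R_0$) that the bad ones are squeezed into a thin annulus of controlled measure, and yet large enough that their guaranteed lower radius bound $\delta_\tau h_E(\lambda^*B_0)$ remains a definite positive fraction of $h_E(\lambda^*B_0)$. Once this balance is achieved using \eqref{measureannulardecayproperty}, the rest of the proof is a straightforward iteration, and combining (ii) with Theorem~\ref{maintheoremallspaces} yields Theorem~\ref{maintheoremannular}.
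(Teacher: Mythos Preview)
Your proof is correct and follows essentially the same approach as the paper: apply Lemma~\ref{rescalinglemma} to $\lambda^*B_0$, trap all the ``bad'' balls in a thin annulus whose measure is controlled by~\eqref{measureannulardecayproperty}, and conclude that an admissible ball for $h_E(B_0)$ survives. The only cosmetic difference is that you argue directly (splitting into good and bad balls) whereas the paper argues by contradiction (assuming $h_E(B_0)<\delta^*h_E(\lambda^*B_0)$ forces \emph{all} the balls into the annulus); the parameter choices and the iteration in~(ii) are the same.
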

\begin{proof}
Let us prove (i). For $ \eta= c \, C_\mu^{-2},$ let $\varepsilon \in (0,1)$ be as in \eqref{measureannulardecayproperty}. Now, let parameters $\lambda \in (1,2)$, $\tau \in (0,1/4)$ be so that $\lambda \in (1,2)$ with $ \lambda^{-1}=1- \frac{\varepsilon}{2}$ and $\tau=\varepsilon/4.$ The desired constants $\lambda^*$ and $\delta^*$ are respectively defined as $\lambda$ and $\delta_\tau,$ where $\delta_\tau$ is the one associated with $\tau$ from Lemma \ref{rescalinglemma}.

Now, for a fixed ball $B_0=B(x_0,R_0),$ we will denote $K B_0=B(x_0,K R_0)$ for any $K >0.$ If $s=\lambda^* R_0$ and $t= (1-2 \tau \lambda^*) R_0,$ we have that $1- \frac{t}{s} = \varepsilon,$ and so \eqref{measureannulardecayproperty} yields
 \begin{equation}\label{optimalannulusmeasure}
\mu\left( \lambda^* B_0 \setminus (1-2\tau \lambda^*)B_0 \right) < c \, C_\mu^{-2}\mu(\lambda^* B_0).
\end{equation}

Suppose for the sake of contradiction, that $h_E(B_0) < \delta^* h_E(\lambda^* B_0).$ 

Let $B=B(y,r) \subset \lambda^* B_0 \setminus E$ be a ball with $ \tau \lambda^* R_0 \geq r \geq \delta^* h_E(\lambda^* B_0).$ We claim that the center $y$ of the ball $B$ does not belong to $(1- \tau \lambda^*)B_0.$ Indeed, otherwise $d(x_0,y)<(1- \tau\lambda^*) R_0,$ and the ball $B(y,r)$ is contained in $B_0 \setminus E,$ with $r \leq \tau \lambda^* R_0 < 2R_0$ and $r \geq \delta^* h_E(\lambda^* B_0) > h_E(B_0),$ contradicting the definition of $h_E(B_0).$ Thus, the point $y$ does not belong to $(1-\tau \lambda^*)B_0,$ and because $r\leq \tau \lambda^*  R_0,$ the ball $B(y,r)$ does not contain any point of $(1-2\tau \lambda^*)B_0,$ by virtue of the triangle inequality. Therefore $B \subset \lambda^* B_0 \setminus    (1-2\tau \lambda^*)B_0.$ 

Now, let $B_1,\ldots, B_N$ be pairwise disjoint balls contained in $\lambda^* B_0 \setminus E,$ satisfying the conclusion of Lemma \ref{rescalinglemma} for the ball $\lambda^* B_0$ and $\tau$ as above. By the previous argument, all these balls are contained in $\lambda^* B_0 \setminus (1-2\tau \lambda^*)B_0,$ and so we have the estimates
$$
\mu\left( \lambda^* B_0 \setminus (1-2\tau \lambda^*)B_0 \right) \geq \sum_{i=1}^N \mu(B_i) \geq c \, C_\mu^{-2} \mu(\lambda^* B_0)  .
$$
But this contradicts \eqref{optimalannulusmeasure}. We conclude $h_E(B_0) \geq \delta^*  h_E(\lambda^* B_0)$.

\medskip

Next, we show (ii). Given $L >1,$ let $n\in \N$ be so that $n-1 <\frac{\log L}{\log(\lambda^*)} \leq n$, where $\lambda^*$ is the constant from (i). Then $ L \leq (\lambda^*)^n$ and we can apply repeatedly (i) to obtain
\begin{align*}
h_E( L B_0) & \leq h_E((\lambda^*)^{n} B_0) \leq (\delta^*)^{-1} h_E((\lambda^*)^{n-1} B_0) \leq \cdots \\
&  \leq (\delta^*)^{-n} h_E( B_0) \leq (\delta^*)^{-1} L^{\frac{\log(1/\delta^*)}{\log(\lambda^*)}} h_E(B_0).
\end{align*}
\end{proof}

We conclude from Lemma \ref{lemmaannulardecayweaklyporousdoubling} that in a metric measure space $(X,d,\mu)$ with the property \eqref{measureannulardecayproperty}, the statements \ref{maintheoremannularwpcondition} of Theorem \ref{maintheoremannular} and \ref{maintheoremconditionwp} of Theorem \ref{maintheoremallspaces} are equivalent. Theorem \ref{maintheoremannular} is then a consequence of Theorem \ref{maintheoremallspaces} in this class of spaces.

\subsection{Absolute dyadic weak porosity} 
In this subsection, we only assume that $(X,d,\mu)$ is a metric space with doubling measure. The following lemma shows that if we can cover a sufficiently large portion of the measure of all the dyadic cubes $Q_0$ using the $E$-free dyadic subcubes of $Q_0$ that are admissible for the definition of dyadic weak porosity, then the maximal dyadic hole is doubling.
\begin{lemma}\label{lemmaabsolutedyadicwp}
Let $E$ be dyadic weakly porous set with constants $M\in \N,$ and $c\in (0,1).$ If $c \geq 1- C_\mu^{-1-\left\lfloor \log_2 \left( \frac{2A}{a \theta} \right) \right\rfloor},$ then $g_E$ is doubling with number $M.$ 
\end{lemma}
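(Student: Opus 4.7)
The approach is by contradiction. Fix cubes $Q^*\in\mathcal{D}$ and $Q\in\mathcal{D}(Q^*)$ in the same grid with $g(Q)=g(Q^*)+1$, and suppose $g_E(Q)>M+g_E(Q^*)$. The case $Q^*\cap E=\emptyset$ is immediate: then $g_E(Q^*)=g(Q^*)$ and $g_E(Q)=g(Q)=g(Q^*)+1\leq M+g_E(Q^*)$, contradicting the assumption. So I may assume $Q^*\cap E\neq\emptyset$ throughout.

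Apply the dyadic weak porosity of $E$ to $Q^*$ to obtain pairwise disjoint, $E$-free cubes $Q_1,\ldots,Q_N\in\mathcal{D}(Q^*)$ with $g(Q_i)\leq M+g_E(Q^*)$ for every $i$, and $\sum_{i=1}^N\mu(Q_i)\geq c\,\mu(Q^*)$. The crux is a dyadic dichotomy: no $Q_i$ meets $Q$. Indeed, $Q_i$ and $Q$ lie in the common grid containing $Q^*$, so if $Q_i\cap Q\neq\emptyset$ then either $Q_i\subset Q$, which would place an $E$-free cube of generation at most $M+g_E(Q^*)$ inside $Q$, directly contradicting $g_E(Q)>M+g_E(Q^*)$; or $Q\subsetneq Q_i$, in which case Remark \ref{observationsdyadicsets}(i) forces $g(Q_i)<g(Q)=g(Q^*)+1$, so $g(Q_i)=g(Q^*)$, whence Remark \ref{observationsdyadicsets}(ii) yields $Q_i=Q^*$, contradicting the fact that $Q_i\cap E=\emptyset$ while $Q^*\cap E\neq\emptyset$. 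Therefore $\bigsqcup_i Q_i\subset Q^*\setminus Q$, and the measure estimate of dyadic weak porosity rearranges to
$$
c\,\mu(Q^*)\leq\sum_{i=1}^N\mu(Q_i)\leq\mu(Q^*)-\mu(Q),\qquad\text{hence}\qquad \mu(Q)\leq(1-c)\,\mu(Q^*).
$$

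The contradiction will now come from a matching lower bound for $\mu(Q)/\mu(Q^*)$. Let $z_Q, z_{Q^*}$ denote the reference points of $Q$ and $Q^*$. Proposition \ref{theoremexistencedyadicgrids}(3) sandwiches
$$
B(z_Q,a\theta^{g(Q)})\subset Q\subset Q^*\subset B(z_{Q^*},A\theta^{g(Q^*)}),
$$
and since $z_Q$ lies in the outer ball, the triangle inequality upgrades this to $Q^*\subset B(z_Q,2A\theta^{g(Q^*)})$. Writing $a\theta^{g(Q)}=a\theta\cdot\theta^{g(Q^*)}$, the radii of the two balls centered at $z_Q$ differ by the factor $2A/(a\theta)$, so iterating the doubling condition \eqref{Cmudoublingmeasure} a total of $1+\lfloor\log_2(2A/(a\theta))\rfloor$ times yields
$$
\mu(Q^*)\leq\mu\bigl(B(z_Q,2A\theta^{g(Q^*)})\bigr)\leq C_\mu^{1+\lfloor\log_2(2A/(a\theta))\rfloor}\,\mu\bigl(B(z_Q,a\theta^{g(Q)})\bigr)\leq C_\mu^{1+\lfloor\log_2(2A/(a\theta))\rfloor}\,\mu(Q).
$$
Combining this with $\mu(Q)\leq(1-c)\,\mu(Q^*)$ forces $1-c\geq C_\mu^{-1-\lfloor\log_2(2A/(a\theta))\rfloor}$, contradicting the hypothesis on $c$.

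The main technical step is the dyadic dichotomy separating the alternatives $Q_i\subset Q$ versus $Q\subsetneq Q_i$, which depends squarely on the grid-nesting identities of Remark \ref{observationsdyadicsets}; once these are applied, everything else reduces to the standard ball-cube sandwich of Proposition \ref{theoremexistencedyadicgrids}(3) together with one iterated use of the doubling condition.
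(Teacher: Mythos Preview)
Your proof is correct and follows essentially the same approach as the paper's: both argue by contradiction, use dyadic weak porosity on the parent cube to produce $E$-free subcubes that must avoid the child cube, deduce $\mu(Q)\le(1-c)\mu(Q^*)$, and finish with the ball--cube sandwich plus iterated doubling. The only cosmetic differences are that the paper dispatches the degenerate case via $Q\cap E=\emptyset$ at the end (rather than your $Q^*\cap E=\emptyset$ at the start), and rules out $Q\subset Q_i$ directly from $Q\cap E\neq\emptyset$ rather than via your generation argument through Remark~\ref{observationsdyadicsets}.
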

\begin{proof}
Let $Q_0, Q_0^* \in \mathcal{D}$ be two cubes of the same grid with $Q_0 \in \mathcal{D}(Q_0^*),$ $g(Q_0^*)=g(Q_0)-1,$ and $Q_0 \cap E \neq \emptyset.$ Suppose for the sake of contradiction, that $g_E(Q_0) > M+g_E(Q_0^*).$ Let $P_1,\ldots,P_N \in \mathcal{D}(Q_0^*)$ be $E$-free and mutually disjoint cubes with $g(P_i) \leq M + g_E(Q_0^*),$ and $\sum_{i=1}^N \mu(P_i) \geq c \mu(Q_0^*).$ Suppose that one of these cubes $P_i$ intersects $Q_0.$ Then either $Q_0 \subseteq P_i$ or $P_i \subsetneq Q_0.$ The first case is impossible, because $Q_0 \cap E \neq \emptyset = P_i \cap E.$ The second one would imply, by Lemma \ref{observationsdyadicsets}, that $P_i \in \mathcal{D}(Q_0),$ with $P_i \cap E =\emptyset,$ and so $g_E(Q_0) \leq g(P_i) \leq M + g_E(Q_0^*),$ contradicting the assumption. Consequently, we must have $Q_0 \cap P_i = \emptyset$ for every $i=1,\ldots, N,$ and thus we obtain
$$
\mu(Q_0^*\setminus Q_0) \geq \sum_{i=1}^N \mu(P_i) \geq c \mu(Q_0^*).
$$
Then $\mu(Q_0) \leq (1-c) \mu(Q^*_0) \leq C_\mu^{-1-\left\lfloor \log_2 \left( \frac{2A}{a \theta} \right) \right\rfloor} \mu(Q_0^*).$ But denoting $z_0$ and $z_0^*$ respectively the reference points of $Q_0$ and $Q_0^*,$ the chain of inclusions
$$
B(z_0,a \theta^{g(Q_0)}) \subset Q_0 \subset Q_0^* \subset B(z_0^*,A\theta^{g(Q_0^*)}) \subset B(z_0, 2A \theta^{-1} \theta^{g(Q_0)}),
$$
together with the doubling property for $\mu,$ lead us to a contradiction. We conclude $g_E(Q_0) \leq M+g_E(Q_0^*).$ On the other hand, in the case where $Q_0 \cap E= \emptyset,$ one has $g_E(Q_0)=g(Q_0)=g(Q_0^*)+1 \leq g_E(Q_0^*)+1,$ and the result follows as well.
\end{proof}

Define $b_\mu:=1- C_\mu^{-1-\left\lfloor \log_2 \left( \frac{2A}{a \theta} \right) \right\rfloor},$ the lower bound from Lemma \ref{lemmaabsolutedyadicwp}. We say that $E$ has \textit{absolute dyadic weak porosity} when $E$ is dyadic weakly porous with some constants $M\in \N$ and $c\in [ b_\mu,1).$

Lemma \ref{lemmaabsolutedyadicwp} has the following consequence: for a set $E$ with absolute dyadic weak porosity, $g_E$ is doubling. Therefore, we obtain that $\dist(\cdot , E)^{-\alpha} \in A_1(X)$ for some $\alpha>0,$ e.g., using the implication \ref{maintheoremconditionwp}$\implies$\ref{maintheoremconditionA1} in Theorem \ref{maintheoremallspaces} together with Lemma \ref{assumingdoublingdyadicwpimplieswp}. On the other hand, in Lemma \ref{dyadicimplicationsA1}(i) we showed that the $A_1$ condition for $ \dist(\cdot , E)^{-\alpha} $ implies that $E$ satisfies the condition of dyadic weak porosity with constant $c$ as close to $1$ as desired. This observation leads us to the following corollary.

\begin{corollary}\label{corollaryabsolutedyadicwp}
Let $(X,d,\mu)$ be a metric space with doubling measure, and $E \subset X$ a subset. The following statements are equivalent:
\begin{enumerate} 
\item[\normalfont{(1)}] $E$ has absolute dyadic weak porosity.
\item[\normalfont{(2)}] There exists $\alpha>0$ for which $\dist(\cdot, E)^{-\alpha} \in A_1(X)$.
\end{enumerate}
\end{corollary}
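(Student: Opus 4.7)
The plan is to string together results already established in the excerpt, so the task reduces to identifying the correct chain of implications and verifying that the threshold constant $b_\mu$ causes no trouble in either direction.

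For the implication (1)$\implies$(2), I would start from the hypothesis that $E$ is dyadic weakly porous with parameters $M\in\mathbb{N}$ and $c\in[b_\mu,1)$. Since $c\ge b_\mu = 1-C_\mu^{-1-\lfloor \log_2(2A/(a\theta))\rfloor}$, Lemma \ref{lemmaabsolutedyadicwp} applies directly and yields that $g_E$ is doubling (with constant $M$). Then Lemma \ref{assumingdoublingdyadicwpimplieswp} upgrades this to: $h_E$ is doubling and $E$ is weakly porous. At this point, the hypothesis \ref{maintheoremconditionwp} of Theorem \ref{maintheoremallspaces} holds, and its already-proved implication \ref{maintheoremconditionwp}$\implies$\ref{maintheoremconditionA1} produces some $\alpha>0$ with $\dist(\cdot,E)^{-\alpha}\in A_1(X)$.

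For the reverse implication (2)$\implies$(1), the point is that Lemma \ref{dyadicimplicationsA1}(i) is quantitative in $c$: given any $c\in(0,1)$ chosen in advance, one can find $M=M(c,\alpha,[w]_{A_1},\theta,C_\mu)\in\mathbb{N}$ so that the dyadic weak porosity condition holds for $E$ with these constants $(M,c)$. Specifying $c:=b_\mu\in(0,1)$, which depends only on $\theta$ and $C_\mu$, the lemma immediately produces an $M$ so that $E$ is dyadic weakly porous with constants $(M,b_\mu)$, i.e., $E$ has absolute dyadic weak porosity.

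No step is really an obstacle; the only subtlety is ensuring the numerical threshold $b_\mu$ is admissible on both sides. On the forward side this is built into the statement of Lemma \ref{lemmaabsolutedyadicwp}, and on the reverse side it is built into the fact that Lemma \ref{dyadicimplicationsA1}(i) allows the porosity constant $c$ to be chosen arbitrarily in $(0,1)$ (paying only a larger $M$). Hence the two implications close up with no additional work.
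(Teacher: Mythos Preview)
Your proposal is correct and follows essentially the same route as the paper: for (1)$\Rightarrow$(2) you invoke Lemma~\ref{lemmaabsolutedyadicwp} to get $g_E$ doubling, then Lemma~\ref{assumingdoublingdyadicwpimplieswp} and Theorem~\ref{maintheoremallspaces}; for (2)$\Rightarrow$(1) you use the freedom in the constant $c$ in Lemma~\ref{dyadicimplicationsA1}(i) to hit the threshold $b_\mu$. This is exactly the argument sketched in the paragraph preceding the corollary.
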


\section{Weak porosity and doubling maximal free holes are unrelated}\label{sectiondoublingholesexamples}

The purpose of this section is to show that the conditions of weak porosity and of doubling maximal $E$-free holes $h_E$ for a set $E$ are, in general, independent of each other. To do this, we present two examples in the following subsections, the latter of which is Theorem \ref{thm:exampleDoublingNotAinfty}.

\subsection{Weak porosity does not imply doubling maximal holes} 

The following example tells us that not for every weakly porous set $E$ the maximal hole $h_E$ is doubling. Combining Lemmas \ref{dyadicimplicationsA1} and \ref{assumingdoublingdyadicwpimplieswp}, this implies that not every weakly porous set $E$ has the property that $\dist(\cdot, E)^{-\alpha} \in A_1(X)$ for some $\alpha >0.$ Furthermore, we can actually prove that the weights of the form $\dist(\cdot, E)^{-\alpha}$, with $\alpha>0$, are not doubling. 

\begin{example}\label{examplenotdoublinghole}
Consider the space $X= \lbrace (x_1,x_2) \in \R^2 \, : \, x_1 x_2=0, \: x_1 \geq 0,\, x_2 \geq 0 \rbrace,$ equipped with the distance $d:=\ell_\infty \! \! \! \upharpoonright_X$, and with the measure $\mu:=\mathcal{H}^1$. Then $(X, d,\mu)$ is a connected metric space with doubling measure. In addition, it can be easily checked\footnote{This property tells us that the truth or falsity of the implication ``$E$ weakly porous $\implies h_E$ doubling'' is not necessarily related to the existence of unique radii and centers of the balls in the underlying space.} that every open ball $B \subset X$ has unique center and radius. The set $E= (\N \cup \lbrace 0 \rbrace) \times \lbrace 0 \rbrace \subset X$ satisfies the following properties.
\begin{enumerate}[label=\textbf{(\arabic*)}]
\item \label{exmetric:weakp}   $E$ is weakly porous with constants $c=1/3$ and $\delta=1/2.$ 
\item \label{exmetric:notdobhol} $h_E$ is not doubling.  
\item \label{exmetric:distnotdob} For every $\alpha \in (0,1),$ the weight $w=\dist(\cdot, E)^{-\alpha}$ is not doubling. Consequently, $\dist(\cdot, E)^{-\alpha} \notin A_p(X)$ for all $\alpha>0$ and $p\geq 1;$ see the comment subsequent to Definition \ref{d.A1}. 
\item \label{exmetric:sharpweakpconstant} If $c >1/3,$ then for every $\delta \in (0,1)$ there exists an open ball $B_0 \subset X,$ such that for any finite collection of open balls $\lbrace B_i=B(p_i,r_i) \rbrace_i$ contained in $B_0 \setminus E$ with $r_i \geq \delta h_E(B_0),$ we have $\mu \left( \bigcup_i B_i \right) < c \mu(B_0).$ Therefore, if $c>1/3,$ for no $\delta \in (0,1)$ the set $E$ is weakly porous with constants $(c,\delta). $
\end{enumerate} 
\end{example}
\begin{proof}
\item[\ref{exmetric:weakp}] There are two types of open balls $B_0=B(p,R_0) \subset X,$ $p=(p_1,p_2)\in X,$ $R_0>0$ to be checked. 
\begin{enumerate}
\item[(I)] $(0,0) \notin B_0.$ This is equivalent to saying that $R_0 \leq \max \lbrace p_1,p_2 \rbrace.$ If $p_1=0,$ then $B_0 \subset \lbrace x_1=0 \rbrace,$ $B_0 \cap E =\emptyset,$ and the condition of weak porosity trivially holds. If $p_2=0,$ then $B_0 \subset \lbrace x_2=0 \rbrace$, and the condition of weak porosity with constants $c=1/3$ and $\delta =1/2$ for $B_0$ follows by easy computations.  

\item[(II)] $(0,0) \in B_0.$ Then $R_0 > \max\lbrace p_1,p_2 \rbrace,$ and $B_0$ can be written as
$$ 
B_0=\left\lbrace 
	\begin{array}{lccl}
	  \left( (0, p_1+R_0) \times \lbrace 0 \rbrace \right) \cup  \left(\lbrace 0 \rbrace \times (0,R_0) \right) \cup \lbrace (0,0) \rbrace & \mbox{ if } &\:   p_2=0 & \\
	\left( (0, R_0) \times \lbrace 0 \rbrace \right) \cup  \left(\lbrace 0 \rbrace \times (0,p_2+R_0) \right) \cup \lbrace (0,0) \rbrace & \mbox{ if } &\:   p_1 = 0, &  
 	\end{array}
	\right.
$$
where the unions are disjoint. 

In the case where $p_2=0,$ the ball $B^*=\lbrace 0 \rbrace \times (0,R_0) \subset B_0 \setminus E $ has radius equal to $R_0/2$ and $\mu(B^*) =R_0 \geq \mu(B_0)/3.$ The value of $h_E(B_0)$ is $h_E(B_0)=\frac{1}{2}\max\lbrace R_0, \min\lbrace 1, p_1+R_0 \rbrace \rbrace$, and because $p_1<R_0,$ for any value of $h_E(B_0)$ we have that the radius $R_0/2$ of $B^*$ satisfies $R_0/2 \geq h_E(B_0)/2.$ Thus, for the balls $B_0$ in this subcase where $p_2=0,$ the weak porosity for $B_0$ is true with $c=1/3$ and $\delta=1/2.$ 

Now, when $p_1=0,$ the ball $B^*=\lbrace 0 \rbrace \times (0,p_2+R_0) \subset B_0 \setminus E$ has radius equal to $(p_2+R_0)/2$ and $\mu(B^*)=p_2+R_0 \geq \mu(B_0)/2.$ In this subcase, we have $h_E(B_0)=(p_2+R_0)/2,$ so the condition of weak porosity for $B_0$ is true with constants $c=\delta=1/2$   

\end{enumerate}

\item[\ref{exmetric:notdobhol}] For every $n\in \N,$ consider the open balls $B_n=B((n,0),n)$ and $2B_n=B((n,0),2n)$ of $X$ centered at $(n,0)$ and with radius $n$ and $2n$ respectively. Then $B_n = (0,2n) \times \lbrace 0 \rbrace$ and $2B_n= \left( (0,3n) \times \lbrace 0 \rbrace \right) \cup \left( \lbrace 0 \rbrace \times (0, 2n) \right) \cup \lbrace (0,0) \rbrace$. Clearly, $h_E(B_n)= 1/2,$ but the open ball $\widehat{B}_n=\lbrace 0 \rbrace \times (0,2n)$ is contained in $2B_n \setminus E$ and has radius equal to $n.$ Therefore $h_E(2B_n) \geq n,$ and so
$$
\lim_n \frac{h_E(2B_n)}{h_E(B_n)} = \infty.
$$
According to Definition \ref{definitionofdoubling}, $h_E$ is not doubling. 

\item[\ref{exmetric:distnotdob}] Fix $\alpha \in (0,1),$ and set $w=\dist(\cdot,E)^{-\alpha}.$ For the balls $B_n=B((0,n),n)= \lbrace 0 \rbrace \times (0, 2n)$ and $2B_n=B((0,n),2n)= \left( \lbrace 0 \rbrace \times (0, 3n) \right) \cup \left( (0,2n) \times \lbrace 0 \rbrace \right) \cup \lbrace (0,0) \rbrace,$ we have
$$
w(B_n)=\int_0^{2n} t^{-\alpha} \ud t = \frac{(2n)^{1-\alpha}}{1-\alpha},
$$
whereas
$$
 w(2B_n) \geq \int_{  (0,2n) \times \lbrace 0 \rbrace } w\ud \mu = \sum_{k=1}^{2n} \int_{k-1}^{k} \dist \left( t, \lbrace k-1,k \rbrace \right)^{-\alpha} \ud t= c(\alpha) 2n.
$$
Here $c(\alpha)>0$ is a constant depending only on $\alpha.$ Because $\alpha>0,$ $\lim_n \frac{w(2B_n)}{w(B_n)} = \infty,$ and hence $w$ is not a doubling weight. On the other hand, if $\alpha \geq 1,$ $w=\dist(\cdot, E)^{-\alpha}$ is not locally integrable (e.g., over balls centered at $(0,0)$) and thus $w$ does not even define a weight function.

\item[\ref{exmetric:sharpweakpconstant}] Fix the parameters $c > 1/3$ and $\delta\in (0,1).$ Let $\lambda >1$ be close enough to $1$ so that $\frac{\lambda}{1+2\lambda} <c,$ and $n\in \N$ be so that $ \lambda \delta n >2.$ For the ball $B_0=B( (n,0), \lambda n),$ we have $h_E(B_0)=\frac{\lambda n}{2}$, and every ball $B(p,r)$ contained in $B_0 \setminus E $ with the property that $r \geq \delta h_E(B_0)$ must be contained in $B^*_0=\lbrace 0 \rbrace \times (0, \lambda n)$. Therefore, for any finite collection of balls $B_1, \ldots,  B_N $ contained in $B_0 \setminus E$ and with radius at least $\delta h_E(B_0),$ one has
$$
 \mu \Big ( \bigcup_i B_i  \Big ) \leq \mu(B_0^*)  = \lambda n < c (1+2\lambda) n = c \mu(B_0). 
$$ 
\end{proof}

Notice that property \ref{exmetric:sharpweakpconstant} in Example \ref{examplenotdoublinghole} shows that the condition of weak porosity of $E$ is, in general, not self-improving in the sense of enlarging the parameter $c.$

\begin{remark}
It is not difficult to adapt Example \ref{examplenotdoublinghole} to a bounded space $X.$ Indeed, it suffices to consider $X = [0,1] \times [0,1],$ with the same metric and measure as above, and the set $E = \lbrace (0,0) \rbrace \cup \left( \lbrace 1/n \, : \, n \in \N \rbrace \times \lbrace 0 \rbrace \right).$ Then $h_E$ does not satisfy the doubling property for balls $B_n$ centered at $(1/(2n), 0)$ and with radius $1/(2n),$ because $h_E(2B_n) = 1/n$ and $h_E(B_n) \approx 1/n^2.$ And with similar arguments to those in Example \ref{examplenotdoublinghole}\ref{exmetric:weakp}, one can show that $E$ is weakly porous. Indeed, the proof differs only in the case where $(0,0) \notin B_0=B(p,R_0)$ where $p=(p_1,0),$ so that $p_1\geq R_0$ and $B_0 = (0, \min\lbrace 1, p_1 + R_0 \rbrace) \times \lbrace 0 \rbrace.$ The set $E$ satisfies the weak porosity condition for $B_0$ for the same reason that the set $\lbrace 1/n   \rbrace_n$ satisfies the weak porosity for any interval $(0,a),$ $a>0.$ The easy proof of this fact is left to the reader.

\end{remark}

\subsection{A non-weakly porous set with doubling maximal holes}\label{sectionbigcounterexample} We will now prove Theorem \ref{thm:exampleDoublingNotAinfty}, where we obtain a non-weakly porous set $E \subset \R$ whose maximal free holes are doubling. As a matter of fact, although the functions $\dist(\cdot,E)^{-\alpha}$, with $\alpha \in (-\infty,1)\setminus \lbrace 0 \rbrace,$ define doubling weights, they do not belong to any class $A_p(\R)$, $p \geq 1.$ The following example is a more precise formulation of the theorem. 

\begin{example}\label{exampledoublignholesdoesnotimplywp}
We endow $\R$ with the usual distance and the Lebesgue measure. There exists a set $E \subset \R$ such that:
 \begin{enumerate}[label=\textbf{(\arabic*)}]
\item\label{exbig:weight}  $w_{\alpha}:=\dist(\cdot,E)^{-\alpha} \in L_{\textrm{loc}}^1(\R)$ for $\alpha < 1, $ i.e., $w_{\alpha}$ defines a weight function.
\item \label{exbig:notAp} $E$ is not weakly porous. In fact, $w_{\alpha}\notin \bigcup_{p \geq 1} A_p(\R)$ for every $\alpha \neq 0.$
\item \label{exbig:dobholes}  $h_E$ is doubling with constant $16.$
\item \label{exbig:dobweight} $w_{\alpha}$ defines a doubling weight in $\R$ for all $\alpha <1$; meaning that $w_\alpha(K) \leq C w_\alpha(J)$ whenever $J \subset K \subset \R$ are intervals with $|K| \leq 2 |J|.$  
\end{enumerate} 
\end{example}
\begin{proof}
We construct $E$ as in \cite[Section 8]{ALMV22}, but here all the dilations will be equal to $1/2.$ Define $E_0:=\lbrace 0,1 \rbrace$ and $E_n:=E_{n-1} \cup E_{n-1}^1 \cup E_{n-1}^2,$ for every $n\in \N,$ where:
\begin{enumerate}
\item[$\bullet$] $E_{n-1}^1$ is a translation of $E_{n-1}$ dilated by $1/2$ and whose first point is the last point of $E_{n-1},$
\item[$\bullet$] $E_{n-1}^2$ is a translation of $E_{n-1}$ whose first point is the last point of $E_{n-1}^1.$ 
\end{enumerate}
Finally, we define $E^+= \bigcup_{n=0}^\infty E_n$ and $E= E^+ \cup \left( - E^+ \right).$ Here $-E^+$ is the reflection of $E^+$ with respect to the origin. We let $\mathcal{I}_n,$ $\mathcal{I}_n^1,$ and $\mathcal{I}_n^2$ denote the smallest closed intervals containing $E_n$, $E_n^1,$ and $E_n^2$ respectively, for every $n\in \N \cup \lbrace 0 \rbrace.$ A closed interval $I$ whose endpoints are two consecutive points of $E$ is called an \emph{edge} of $E$. In particular, $\textrm{int}(I) \cap E = \emptyset$ if $I$ is an edge of $E.$ The main properties of $E$ are:
\begin{enumerate}
\item[$\bullet$] For every $n\in \N \cup \lbrace 0 \rbrace$ and $k\in \lbrace 0,\ldots, n\rbrace,$ the interval $\mathcal{I}_n$ contains exactly $\binom{n}{k} 2^{n-k}$ edges of $E$ of length equal to $2^{-k}.$ In particular, each of the intervals $\mathcal{I}_n,$ $\mathcal{I}_n^1,$ and $\mathcal{I}_n^2$ has exactly $3^n$ edges of $E.$  
\item[$\bullet$] Each of the intervals $\mathcal{I}_n$ and $\mathcal{I}_n^2$ contains translated copies of the intervals $\mathcal{I}_0, \ldots, \mathcal{I}_{n-1}$ distributed in a \emph{palindromic} manner: both $\mathcal{I}_n$ and $\mathcal{I}_n^2$ contain from left to right as well as from right to left intervals $\mathcal{I}_0^* \subset \mathcal{I}_1^* \subset \cdots \subset \mathcal{I}_{n-1}^*$ that are translated copies of $\mathcal{I}_0 \subset \mathcal{I}_1 \subset \cdots \subset \mathcal{I}_{n-1}$, respectively. 
\item[$\bullet$] Each interval $\mathcal{I}_n^1$ contains from left to right as well as from right to left intervals $(1/2) \mathcal{I}_0^* \subset (1/2) \mathcal{I}_1^* \subset \cdots \subset (1/2) \mathcal{I}_{n-1}^*$ that are translated copies of $\mathcal{I}_0 \subset \mathcal{I}_1 \subset \cdots \subset \mathcal{I}_{n-1}$ dilated by $1/2.$
\item[$\bullet$] $\dist(\cdot,E)= \dist(\cdot,E_n)$ on $\mathcal{I}_n$ for every $n\in \N \cup \lbrace 0 \rbrace.$ 
\item[$\bullet$] $\lvert \mathcal{I}_n \rvert = (5/2)^n$ for every $n\in \N \cup \lbrace 0 \rbrace.$ 
\end{enumerate}

Let us prove properties \ref{exbig:weight}--\ref{exbig:dobweight}.

\item[\ref{exbig:weight}] For every $n\in \N,$ the construction of $\mathcal{I}_n$ yields 
\begin{align*}
\int_{\mathcal{I}_n} \!\!\dist(\cdot,E)^{-\alpha}    & =\int_{\mathcal{I}_{n-1}} \!\!\!\!\dist(\cdot,E_{n-1})^{-\alpha}   + \int_{\mathcal{I}_{n-1}^1}\!\! \!\!\dist(\cdot,E_{n-1}^1)^{-\alpha}   + \int_{\mathcal{I}_{n-1}^2} \!\!\!\!\dist(\cdot,E_{n-1}^2)^{-\alpha}   \\
& = (2+2^{\alpha-1}) \int_{\mathcal{I}_{n-1}}\!\!\!\! \dist(\cdot,E_{n-1})^{-\alpha}   = (2+2^{\alpha-1})^n \int_{\mathcal{I}_0}\!\! \dist(\cdot,\lbrace 0,1 \rbrace)^{-\alpha}   .
\end{align*}
Because $\mathcal{I}_0=[0,1]$ and $\alpha < 1 ,$ the last integral is finite, and this shows that $\dist(\cdot,E)^{-\alpha}$ is integrable at every interval contained in $[0, +\infty).$ By the symmetry of $E$ around the origin, the same property holds for intervals contained in $(-\infty, 0].$

\smallskip

\item[\ref{exbig:notAp}] For the sake of contradiction, suppose $E$ is weakly porous with constants $\delta,c\in (0,1).$ Fix $n_0=n_0(\delta)\in \N$ so that $\delta \geq 2^{-n_0}.$ For $n \in \N$ with $n>n_0,$ one has $h_E(\mathcal{I}_n)= 1/2$ and so any edge $J$ of $E$ contained in $\mathcal{I}_n$ with radius at least $\delta h_E(\mathcal{I}_n)$ must satisfy $|J| \geq 2^{-n_0}.$ The interval $\mathcal{I}_n$ contains exactly $\binom{n}{k} 2^{n-k}$ edges of $E$ with length $2^{-k},$ for $k=0,\ldots,n.$ Denote by $F_n$ the union of all the edges of $E$ that are contained in $\mathcal{I}_n$ and have radius at least $\delta h_E(\mathcal{I}_n).$ By the assumption of weak porosity for $E$, we must have $|F_n| \geq c |\mathcal{I}_n|$ for every $n>n_0.$ However, the previous observations permit to write the estimates
\begin{align*}
|F_n | & \leq \sum_{k=0}^{n_0} \binom{n}{k} 2^{n-k} 2^{-k} = 2^{n-n_0} \sum_{k=0}^{n_0} \binom{n}{k} 2^{n_0-k} 2^{-k} \\
&  \leq 2^{n-n_0} \sum_{k=0}^{n_0} \binom{n}{n_0} \binom{n_0}{k} 2^{n_0-k} 2^{-k}  \leq \frac{2^{n-n_0} n^{n_0}}{n_0!} \sum_{k=0}^{n_0}  \binom{n_0}{k} 2^{n_0-k} 2^{-k} = C(n_0) 2^n n^{n_0}.
\end{align*}
Together with $|\mathcal{I}_n|=(5/2)^n,$ this implies
$$
\frac{|F_n|}{|\mathcal{I}_n|}\leq C(n_0) (4/5)^n n^{n_0}, \quad \text{for every} \quad n>n_0,
$$
contradicting that $|F_n| \geq c |\mathcal{I}_n|$ for every $n\in \N.$

Let us mention that the first part of \ref{exbig:notAp} can also be proven via Theorem \ref{maintheoremannular}. Indeed, for this set $E$, a computation similar to the one in \ref{exbig:weight} permits to prove that $\dist(\cdot, E)^{-\alpha} \notin A_1(\R^n)$ for $\alpha \in (0,1],$ and so Theorem \ref{maintheoremannular} implies that $E$ is not weakly porous. In fact, using similar computations as in \ref{exbig:weight}, we next show the second part of \ref{exbig:notAp}. Let us fix $p>1$ and $\alpha \in (0,1).$ For our purposes, we can assume further $p>2$, thanks to the inclusions $A_r(\R) \subset A_s(\R)$ for $r \leq s.$ For any $\beta > -1,$ those computations lead to
$$
\int_{\mathcal{I}_n} \!\!\dist(\cdot,E)^{\beta} = (2+2^{-1-\beta})^n \int_{\mathcal{I}_0}\!\! \dist(\cdot,\lbrace 0,1 \rbrace)^{\beta}, \quad n\in \N.
$$
Applying this equality with $\beta= -\alpha, \, \frac{\alpha}{p-1}, $ and using that $|\mathcal{I}_n|= (5/2)^n,$ we obtain
\begin{equation}\label{showingnotApforeveryp}
\vint_{\mathcal{I}_n} \!\!\dist(\cdot,E)^{-\alpha}  \left( \vint_{\mathcal{I}_n} \!\!\dist(\cdot,E)^{\frac{\alpha}{p-1}}  \right)^{p-1} = C(\alpha,p) \left( \frac{(2+2^{\alpha-1})(2+2^{-\frac{\alpha}{p-1}-1})^{p-1}}{(5/2)^p} \right)^n,
\end{equation}
for every $n\in \N.$ Defining the function $h(t)=(2+2^{t-1})(2+2^{-\frac{t}{p-1}-1})^{p-1},$ $t \in [0,1)$, and using that $p>2,$ a computation shows that $h'>0$ on $(0,1)$, and thus $h(\alpha)>h(0)=(5/2)^p$ for all $\alpha \in (0,1).$ Therefore, for every $\alpha \in (0,1)$ and $p>2,$ the right-hand side term of \eqref{showingnotApforeveryp} goes to $\infty$ as $n \to \infty.$ We conclude that $\dist(\cdot, E)^{-\alpha} \notin A_p(\R)$ for every $\alpha \in (0,1)$ and $p>1.$ For negative values of $\alpha,$ it suffices to note that if we had $w_{\alpha} \in A_p(\R)$, for some $p>1,$ then also $w_{\frac{\alpha}{1-p}} \in A_{p'}(\R),$ where $\alpha/(1-p)>0$, and this is impossible by what we have just proved. 

\smallskip

\item[\ref{exbig:dobholes}] The verification of this property requires more work. We first show the doubling estimates for $h_E$ over intervals that are contained in $[0,+\infty).$

\begin{lemma}\label{doublingconditionforpositivecubes}
We have $h_E(K) \leq 16 h_E(J)$ for any two intervals $J \subset K \subset [0,+ \infty)$ with $\lvert J \rvert \geq  \frac{1}{2}\lvert K \rvert.$ 
\end{lemma}
\begin{proof}[Proof of Lemma \ref{doublingconditionforpositivecubes}]
In the case where $J$ is contained in the union of three consecutive edges $J_1, J_2, J_3$ of $E,$ the result easily follows by observing that
$$
 2 h_E(J)   = \max_{1 \leq j \leq 3} \lvert J \cap J_j \rvert \geq \frac{1}{3} \sum_{j=1}^3  \lvert J \cap J_j \rvert = \frac{1}{3}\lvert J \rvert \geq   \frac{1}{6}\lvert K \rvert \geq \frac{1}{3} h_E(K) .
$$
Now, we prove by induction on $N$ the desired estimate for every interval $K \subset \mathcal{I}_N.$ The cases $N=0,1$ are covered by the preceding argument, so let us assume that the estimate holds true for $n=1, \ldots, N-1,$ and let us verify the property for intervals $K$ contained in $\mathcal{I}_N.$

\smallskip

We may and do assume that $K$ is not contained in $\mathcal{I}_{N-1},$ as otherwise we can simply apply the induction hypothesis. Moreover, let us suppose first that $K$ is entirely contained in one of the intervals $\mathcal{I}_{N-1}^1$, $\mathcal{I}_{N-1}^2.$ In the first case, it is clear that there are translations $J^*$ and $K^*$ dilated by $2$ of the intervals $J$ and $K$ respectively, and such that $J^*\subset K^* \subset \mathcal{I}_{N-1},$ $\lvert J^* \rvert \geq \frac{1}{2}\lvert K^*\rvert,$ $ h_E(J^*)=2h_E(J)$ and $h_E(K^*)=2h_E(K).$ The induction hypothesis tells us that $h_E(K^*)\leq 16 h_E(J^*),$ and so $h_E(K)\leq 16 h_E(J)$ as well. In the latter case, $K \subset \mathcal{I}_{N-1}^2,$ we consider translations $J^*\subset K^* \subset \mathcal{I}_{N-1}$ of $J \subset K$ and apply the induction hypothesis.

\smallskip

Thus, from now on we will assume that $K$ intersects both intervals of one of the couples $\lbrace \mathcal{I}_{N-1}, \mathcal{I}_{N-1}^1 \rbrace,$  $\lbrace \mathcal{I}_{N-1}^1, \mathcal{I}_{N-1}^2 \rbrace.$ We will only study the situation where $K$ intersects both of $\lbrace \mathcal{I}_{N-1}, \mathcal{I}_{N-1}^1 \rbrace,$ as the proof in the other situation is identical, thanks to the properties of symmetry of $\mathcal{I}_N.$ 

\smallskip

$\textbf{Case 1.}$ $J$ is contained in $\mathcal{I}_{N-1}.$ Denote $K^0=K \cap \mathcal{I}_{N-1},$ $K^1=K \cap \mathcal{I}_{N-1}^1$ and $K^2=K \cap \mathcal{I}_{N-1}^2.$ The fact that $K$ intersects $\mathcal{I}_{N-1}^1$ yields the identities $2h_E(K^0)=\min\lbrace 1, \lvert K^0 \rvert \rbrace,$ $2h_E(K^1)=\min\lbrace 1/2, \lvert K^1 \rvert \rbrace,$ and $2h_E(K^2)=\min\lbrace 1, \lvert K^2 \rvert \rbrace$. And, because $K^0$ contains $J,$ and $\lvert J \rvert \geq \frac{1}{2} \lvert K \rvert,$ it is clear that $\lvert K^1 \rvert, \lvert K^2 \rvert \leq \lvert K^0 \rvert.$ This implies
$$
2 h_E(K)  =2\max\lbrace  h_E(K^0),  h_E(K^1), h_E(K^2)  \rbrace \leq \min\lbrace 1, \lvert K^0 \rvert  \rbrace = 2h_E(K^0).
$$
But $K^0$ is a cube in $\mathcal{I}_{N-1}$ containing $J$ with $\lvert J \rvert \geq \frac{1}{2} \lvert K^0 \rvert$ and so, the induction hypothesis gives $h_E(K)\leq h_E(K^0) \leq 16 h_E( J).$ 

\smallskip

$\textbf{Case 2.}$ $J$ is contained in $\mathcal{I}_{N-1}^1.$ Consider the cube $K^1=K \cap \mathcal{I}_{N-1}^1.$ If $K^1$ intersects at most three edges of $E$ within $\mathcal{I}_{N-1}^1,$ then the same property holds for $J,$ and the argument from the beginning of the proof shows that $h_E(K) \leq 6 h_E(J).$

Now, suppose $K^1$ intersects more than three edges of $E$ within $\mathcal{I}_{N-1}^1.$ By the properties of $\mathcal{I}_{N-1}^1,$ and because $K$ intersects $\mathcal{I}_{N-1},$ there must exist $ 1\leq m \leq N-2$ such that $(1/2) \mathcal{I}_m^* \subset K^1 \subset (1/2) \mathcal{I}_{m+1}^*,$ where $(1/2) \mathcal{I}_m^*$ and $(1/2) \mathcal{I}_{m+1}^*$ are translations of $\mathcal{I}_m$ and $\mathcal{I}_{m+1}$ dilated by $1/2.$ Notice that $(1/2) \mathcal{I}_{m+1}^*$ can be regarded as the union of three intervals $(1/2) \mathcal{I}_{m}^*$, $(1/2) \left( \mathcal{I}_{m}^1 \right)^*$, $(1/2) \left( \mathcal{I}_{m}^2\right)^*,$ where $\left( \mathcal{I}_{m}^1 \right)^*$ and $\left( \mathcal{I}_{m}^2 \right)^*$ are translations of $\mathcal{I}_m^1$ and $\mathcal{I}_m^2.$ Furthermore, each of the intervals $\mathcal{I}_m^*, \left( \mathcal{I}_{m}^1 \right)^*,  \left( \mathcal{I}_{m}^2 \right)^*$ can be in turn subdivided into three intervals in the following manner. Denoting by $\mathcal{I}_{m-1}^*$, $\left( \mathcal{I}_{m-1}^1 \right)^*,$ $\left( \mathcal{I}_{m-1}^2 \right)^*$ various translated copies of $\mathcal{I}_{m-1}$, $\mathcal{I}_{m-1}^1$, $\mathcal{I}_{m-1}^2$, both $\mathcal{I}_m^*$ and $\left( \mathcal{I}_m^2 \right)^*$ are the union of the intervals $\mathcal{I}_{m-1}^*$, $\left( \mathcal{I}_{m-1}^1 \right)^*,$ $\left( \mathcal{I}_{m-1}^2 \right)^*$; and $\left( \mathcal{I}_{m}^1 \right)^*$ is the union of $(1/2)\mathcal{I}_{m-1}^*$, $(1/2) \left( \mathcal{I}_{m-1}^1 \right)^*,$ $(1/2)\left( \mathcal{I}_{m-1}^2 \right)^*.$ Therefore $(1/2)\mathcal{I}_{m+1}^*$ is the union of $9$ intervals, the largest of which has length equal to $(1/2) \lvert \mathcal{I}_{m-1} \rvert$. Because $K^1 \supset (1/2) \mathcal{I}_m^*,$ we have that
$$
\lvert J \rvert \geq \frac{1}{2} \lvert K^1 \rvert \geq \frac{1}{4} \lvert \mathcal{I}_m^* \rvert = \frac{1}{4} \lvert \mathcal{I}_m \rvert = \frac{5}{8} \lvert \mathcal{I}_{m-1} \rvert > \frac{1}{2} \lvert \mathcal{I}_{m-1} \rvert.
$$
This shows that $J$ must contain either the first half or the second half of one of those $9$ intervals. By the construction of the $\mathcal{I}_j$'s, the first and the last edges of $E$ within each of those $9$ intervals have length equal to $1/2,$ $1/4$ or $1/8.$ Therefore, 
$$
2h_E(J) \geq   \frac{1}{16} \geq \frac{1}{8} h_E(K).
$$

\smallskip

$\textbf{Case 3.}$ $J$ is contained in $\mathcal{I}_{N-1}^2.$ This case is identical to $\textbf{Case 1}$.

\smallskip

$\textbf{Case 4.}$ $J$ intersects both $\mathcal{I}_{N-1}$ and $\mathcal{I}_{N-1}^1.$ If $J$ contains the last edge of $E$ within $\mathcal{I}_{N-1}$ or the first edge of $E$ within $\mathcal{I}_{N-1}^1,$ then $2h_E(J) \geq 1/2 \geq h_E(K).$ Otherwise, we have 
$$
2h_E(J)=\max \lbrace \lvert J \cap \mathcal{I}_{N-1} \rvert, \lvert J \cap \mathcal{I}_{N-1}^1 \rvert \rbrace \geq \frac{1}{2}\lvert J \rvert \geq \frac{1}{4}\lvert K \rvert \geq \frac{1}{2} h_E(K). 
$$

\smallskip

$\textbf{Case 5.}$ $J$ intersects both $\mathcal{I}_{N-1}^1$ and $\mathcal{I}_{N-1}^2. $ This case is identical to $\textbf{Case 4}$.

\end{proof}

Now, using Lemma \ref{doublingconditionforpositivecubes}, we can prove property \ref{exbig:dobholes} of Example \ref{exampledoublignholesdoesnotimplywp}. Let $J \subset K \subset \R$ with $\lvert J \rvert \geq \frac{1}{2}\lvert K \rvert.$ By Lemma \ref{doublingconditionforpositivecubes} and the symmetry of $E$ about the origin, we may assume that $K$ intersects both $(-\infty, 0]$ and $[0, +\infty).$ Let us distinguish three cases.

\smallskip

$\textbf{Case 1.}$ $J$ is contained in $[0,+\infty).$ Denote $K^+=K \cap [0,+\infty)$ and $K^-=K \cap (-\infty, 0].$ Because $J \subset K^+,$ we may apply Lemma \ref{doublingconditionforpositivecubes} to obtain $h_E(K^+) \leq 16 h_E(J).$ Also, notice that $\lvert K^- \vert \leq \lvert K^+ \rvert$ as $K^+$ contains no less than half of the measure of $K.$ This fact and the symmetry of $E$ about the origin leads us to
$$
2h_E(K^-)=\min \lbrace 1, \lvert K^- \rvert \rbrace \leq \min \lbrace 1, \lvert K^+ \rvert \rbrace = 2h_E(K^+)\leq 32 h_E(J).
$$
Consequently, $h_E(K)=\max \lbrace h_E(K^-), h_E(K^+) \rbrace \leq 16 h_E(J).$ 

\smallskip

$\textbf{Case 2.}$ $J$ is contained in $(-\infty,0].$ This is identical to $\textbf{Case 1}$, due to the symmetry of $E$ about the origin.

\smallskip

$\textbf{Case 3.}$ $J$ intersects both $(-\infty, 0]$ and $[0, +\infty).$ If $J$ contains a subinterval of $E$ of length equal to $1,$ then $h_E(J)=1$ and the result follows immediately. Otherwise, denoting $J^+=J \cap [0,+\infty)$ and $J^-=J \cap (-\infty, 0],$ we have
$$
2h_E(J)= \max \lbrace \lvert J^- \rvert, \lvert J^+ \rvert \rbrace \geq \frac{1}{2}\lvert J \rvert \geq  \frac{1}{4}\lvert K \rvert \geq \frac{1}{2} h_E(K).
$$ 

\medskip

\item[\ref{exbig:dobweight}] Fix $\alpha<1$ and denote $w=w_\alpha.$ It is clear that if we find constants $\varepsilon= \varepsilon(\alpha) \in (0,1)$ and $C=C(\varepsilon,\alpha)>0$ so that, for any two intervals $J \subset K \subset \R$, $|J| \geq (1-\varepsilon) |K|$ implies $w(K) \leq C w(J),$ then also $|J| \geq \frac{1}{2}|K|$ implies $w(K) \leq  C w(J)$ for a new constant $C,$ and all intervals $J \subset K.$ Moreover, we will only prove the estimates when $J \subset K \subset [0, + \infty),$ as the general case can be deduced using the symmetry of $E$ about the origin, and arguments similar to those at the end of the proof of \ref{exbig:dobholes}.

To prove the existence of such $\varepsilon$ and $C,$ we first observe that, by the construction of $E,$ for any two consecutive edges of $E,$ say $R_1, R_2,$ one has
\begin{equation}\label{eq:neighboursedges}
\frac{1}{2}|R_2| \leq |R_1| \leq 2 |R_2|.
\end{equation}
This takes care of the first (and non-trivial) case.

\textbf{Case 0.} Suppose that $J$ contains at most $1$ point of $E.$ Then $J \subset I_1 \cup I_2$, where $I_1$ and $I_2$ are two consecutive edges of $E.$ We claim that if $J \subset K$ and $|J| \geq (1-\varepsilon) |K|,$ then $K \subset I_0 \cup I_1 \cup I_2 \cup I_3,$ where these $I_j$ are consecutive edges of $E$ ordered from left to right. Indeed, suppose that, for instance, there is another edge of $E$ lying in the left of $I_0$ and intersecting $K.$ Then, $\frac{1}{2}|J| \leq \max \lbrace |I_1|, |I_2| \rbrace,$ and the relation \eqref{eq:neighboursedges} gives 
$$
|K| \geq |I_0|+ |J| \geq  \max\left\lbrace \frac{1}{2} |I_1|, \frac{1}{4} |I_2| \right\rbrace + |J| \geq \frac{9}{8} |J|. 
$$ 
If $\varepsilon \in (0,1)$ is small enough, the above contradicts $|J| \geq (1-\varepsilon) |K|.$ Therefore $K$ is contained in the union of (at most) $4$ consecutive edges of $E,$ and the estimate $ w(K) \leq C_0 w(J)$ holds for some constant $C_0$ depending only on $\varepsilon$ and $\alpha.$

\smallskip

Thus, from now on, we assume that $J$ contains at least two points of $E.$

\textbf{Case 1.} $K \subset \mathcal{I}_1.$ Clearly, $|J| \geq (1-\varepsilon) |K|$ implies $w(K) \leq C w(J)$ for some constant $C=C(\varepsilon,\alpha).$

\smallskip

Now suppose that for some $N \geq 2$, and all $J \subset K $ with $K \subset \mathcal{I}_{N-1}$, $|J| \geq (1-\varepsilon) |K|$ implies $w(K) \leq C w(J)$ for some constant $C=C(\varepsilon,\alpha).$ We next show that the same holds for $\mathcal{I}_N,$ of course, without increasing the constant $C$ in the iteration. We will see how large the constant $C$ should be during the proof, but it will be independent of $N.$ The induction hypothesis will be used only in \textbf{Case 2} and \textbf{Case 3} below.

From now on, we also assume that $K \subset \mathcal{I}_N,$ and $K \not \subset \mathcal{I}_{N-1}$.

\textbf{Case 2.} $K \subset \mathcal{I}_{N-1}^2.$ Up to a translation, one can assume that both $J$ and $K$ are contained in $\mathcal{I}_{N-1},$ and then apply the induction hypothesis.

\smallskip

\textbf{Case 3.} $K \subset \mathcal{I}_{N-1}^1.$ Then there are intervals $K^*$ and $J^*$ contained in $\mathcal{I}_{N-1}$ so that $K$ (resp. $J$) is a translation of $K^*$ (resp. $J^*$) dilated by the factor $1/2.$ Since $|J^*| \geq (1-\varepsilon) |K^*|,$ by the induction hypothesis, one has 
$$
w(K)=2^{\alpha-1} w(K^*)\leq 2^{\alpha-1} C w(J^*) = C w(J).
$$

\textbf{Case 4.} $\mathcal{I}_{N-1}^1 \subset J.$ In this case, we simply estimate by
$$
w(K) \leq w(\mathcal{I}_N) = \left( 1+2^{2-\alpha} \right) w(\mathcal{I}_{N-1}^1) \leq \left( 1+2^{2-\alpha} \right) w(J).
$$

\textbf{Case 5.} $J \cap \mathcal{I}_{N-1}  \neq \emptyset \neq J \cap \mathcal{I}_{N-1}^1,$ $ J \cap \mathcal{I}_{N-1}^2 = \emptyset.$ By the construction of $E,$ there are integers $n_0, n_1, m_0, m_1, m_2 \geq -1$ and translated copies $\mathcal{I}_{j}^*$ of $\mathcal{I}_j$, for $j\in \lbrace n_0, n_1, m_0, m_1, m_2 \rbrace$, so that
$$
\mathcal{I}_{n_0}^* \subset J \cap \mathcal{I}_{N-1} \subset \mathcal{I}_{n_0+1}^*, \qquad  \frac{1}{2}\mathcal{I}_{n_1}^* \subset J \cap \mathcal{I}_{N-1}^1 \subset \frac{1}{2}\mathcal{I}_{n_1+1}^*,
$$ 
$$
\mathcal{I}_{m_0}^* \subset K \cap \mathcal{I}_{N-1} \subset \mathcal{I}_{m_0+1}^*, \quad  \frac{1}{2}\mathcal{I}_{m_1}^* \subset K \cap \mathcal{I}_{N-1}^1 \subset \frac{1}{2}\mathcal{I}_{m_1+1}^*, \quad \mathcal{I}_{m_2}^* \subset K \cap \mathcal{I}_{N-1}^2 \subset \mathcal{I}_{m_2+1}^*.
$$
We understand $\mathcal{I}_j= \emptyset$ whenever $j=-1.$ Define $n= \max\lbrace n_0, n_1 \rbrace$ and $m=\max\lbrace m_0,m_1,m_2 \rbrace.$ Since we are assuming that $J$ contains at least two points of $E,$ we must have $m \geq n \geq 0.$ Let us first see that $m$ and $n$ are comparable. We clearly have
$$
|\mathcal{I}_{n}| \geq c |J \cap \mathcal{I}_{N-1}|, \quad |\mathcal{I}_{n}| \geq  c |J \cap \mathcal{I}_{N-1}^1|,
$$
an so
$$
|\mathcal{I}_n| \geq c |J| \geq  c (1-\varepsilon) |K| \geq c(1- \varepsilon) |\mathcal{I}_m|;
$$
where $c$ denotes various constants depending only on $\alpha.$ This shows that $m \leq n + C,$ for a constant $C=C(\alpha,\varepsilon)>0.$ We can now estimate $w(K)$ by
\begin{align*}
w(K)  & = w(K \cap \mathcal{I}_{N-1})+ w(K \cap \mathcal{I}_{N-1}^1) + w(K \cap \mathcal{I}_{N-1}^2) \leq C(\alpha) w(\mathcal{I}_m) \\
& \leq C(\alpha,\varepsilon) w(\mathcal{I}_n) \leq C(\alpha,\varepsilon) \max\lbrace w(J \cap \mathcal{I}_{N-1}) , 2^{1-\alpha} w(J \cap \mathcal{I}_{N-1}^1) \rbrace \leq C(\alpha,\varepsilon) w(J).
\end{align*}

\textbf{Case 6.} $\mathcal{I}_{N-1}^1 \cap J \neq \emptyset \neq J \cap \mathcal{I}_{N-1}^2,$ $ J \cap \mathcal{I}_{N-1} = \emptyset.$ This case is identical to \textbf{Case 5}, due to the palindromic distribution of the edges of $E.$

\smallskip

\textbf{Case 7.} $J \subset \mathcal{I}_{N-1},$ and $K \not \subset \mathcal{I}_{N-1}.$ Let us first see that $K \cap \mathcal{I}_{N-1}^2 = \emptyset.$ Indeed, otherwise, $K$ contains both $\mathcal{I}_{N-1}^1$ and $J$, and so
$$
|K| \geq |J| + |\mathcal{I}_{N-1}^1| = |J| + \frac{1}{2} |\mathcal{I}_{N-1}| \geq |J| + \frac{1}{2}|J| = \frac{3}{2}|J|.
$$
If $\varepsilon$ is chosen small enough from the beginning, the above contradicts that $|J| \geq (1-\varepsilon) |K|.$ Thus $K \subset \mathcal{I}_{N-1} \cup \mathcal{I}_{N-1}^1$. Again, let $m_0, m_1 \geq -1$ be integers so that 
$$
\mathcal{I}_{m_0}^* \subset K \cap \mathcal{I}_{N-1} \subset \mathcal{I}_{m_0+1}^* \quad \text{and} \quad  \frac{1}{2}\mathcal{I}_{m_1}^* \subset K \cap \mathcal{I}_{N-1}^1 \subset \frac{1}{2}\mathcal{I}_{m_1+1}^*;
$$
where $\mathcal{I}_{m_0}^*$ and $\mathcal{I}_{m_1}^*$ are translated copies of $\mathcal{I}_{m_0}$ and $\mathcal{I}_{m_1},$ and understanding that $\mathcal{I}_{-1}= \emptyset.$ Either $m_0$ or $m_1$ must be nonnegative, since $K$ contains at least two points of $E.$ Also,  
$$
| \mathcal{I}_{m_1} | \leq 2 |K \cap \mathcal{I}_{N-1}^1| \leq 2 | K \setminus J| \leq   \frac{2\varepsilon}{1-\varepsilon} |J| \leq \frac{2\varepsilon}{1-\varepsilon} |\mathcal{I}_{m_0+1}^*| = \frac{2\varepsilon}{1-\varepsilon} |\mathcal{I}_{m_0+1}|. 
$$
Thus, for $\varepsilon$ small enough (but depending only on $\alpha$), we necessarily have $m_0 > m_1.$ In particular $m_0 \geq 0,$ and
$$
w(K) = w( K \cap \mathcal{I}_{N-1}) + w(K \cap \mathcal{I}_{N-1}^1) \leq C(\alpha) w(\mathcal{I}_{m_0}).
$$
In the case where $m_0=0,$ then $|J| \geq (1-\varepsilon) |\mathcal{I}_{0}^*|$ and $J$ is contained in $\mathcal{I}_{1}^*$. So, after translating $J$, $I_0^*$ and $I_1^*,$ it is clear, e.g., by \textbf{Case 1}, that $w(\mathcal{I}_{m_0}) \leq C(\alpha,\varepsilon) w(J).$ And if $m_0 \geq 1,$ then $J$ contains a translated copy of $\mathcal{I}_{m_0-1}$ or $\frac{1}{2} \mathcal{I}_{m_0-1}$ due to the fact that $|J| \geq (1-\varepsilon) |\mathcal{I}_{m_0}|$ (and choosing $\varepsilon$ small enough). This yields
$$
w(\mathcal{I}_{m_0}) \leq C(\alpha) w(\mathcal{I}_{m_0-1}) \leq   C(\alpha) w(J).
$$

\smallskip

\textbf{Case 8.} $J \subset \mathcal{I}_{N-1}^1 \not \supset K,$ or $J \subset \mathcal{I}_{N-1}^2 \not \supset K.$ Due to the relationship between $\mathcal{I}_{N-1} ,$ $ \mathcal{I}_{N-1}^1$ and $\mathcal{I}_{N-1}^2,$ we can repeat the arguments of \textbf{Case 7} above.

\end{proof}

\begin{remark}\label{rem:generalizationR^nExample}
A set $F \subset \R^n$ with properties similar to those of $E$ from Example \ref{exampledoublignholesdoesnotimplywp} can be constructed, for example, by setting $F: = E \times \R^{n-1} \subset \R^n.$ Indeed, it suffices to note that
\begin{equation}\label{eq:distanceprojectioncoordinate}
\dist((x,x_2,\ldots,x_n), F)^\beta = \dist(x, E)^\beta, \quad \text{for all} \quad (x,x_2,\ldots,x_n) \in \R^n,\: \beta \in \R,
\end{equation}
and that $\widetilde{h_F}([a,b)^n) = h_E([a,b))$ for all $a,b\in \R,$ $a<b,$ where
$$
\widetilde{h_F}([a,b)^n) : = \sup \lbrace r>0 \: : \: \text{there exists } t\in \R \text{ with } [t-r,t+r]^n \subset [a,b]^n \setminus F \rbrace.
$$
Comparing $\widetilde{h_F}$ with $h_F$ from \eqref{definitionmaxholeballs}, we see that if $Q, B \subset \R^n$ are respectively a cube and a Euclidean ball with $Q \subset B$ (resp. $B \subset Q$), then $ \widetilde{h_F} (Q) \lesssim h_F(B)$ (resp. $h_F(B) \lesssim \widetilde{h_F}(Q)$). These remarks show that $F$ satisfies the following properties:
 \begin{itemize}
\item  $w_{\alpha}:=\dist(\cdot,F)^{-\alpha} \in L_{\textrm{loc}}^1(\R^n)$ for $\alpha < 1, $ i.e., $w_{\alpha}$ defines a weight function.
\item  $F$ is not weakly porous. In fact, $w_{\alpha}\notin \bigcup_{p \geq 1} A_p(\R^n)$ for every $\alpha \neq 0.$
\item $w_{\alpha}$ defines a doubling weight in $\R^n$ for all $\alpha <1$.
\item  $h_F$ is doubling (recall the Definition \ref{definitionofdoubling}).
\end{itemize} 
The first three properties follow from \eqref{eq:distanceprojectioncoordinate}, Fubini's Theorem, and Example \ref{exampledoublignholesdoesnotimplywp} \ref{exbig:weight},\ref{exbig:notAp},\ref{exbig:dobweight}. The fourth one is a consequence of the previous remarks on $\widetilde{h_F}$ and Example \ref{exampledoublignholesdoesnotimplywp} \ref{exbig:dobholes}. 
\end{remark}

\section{Muckenhoupt exponents and $A_1$ conditions} \label{sectionMuexponent}

The purpose of this section is to give a precise quantification for the range of those $\alpha \in \R$ such that $\dist(\cdot, E)^{-\alpha} $ belongs to the $A_p(X)$ class for a given $1\leq p<\infty.$ Previous results on this direction were obtained in \cite{ALMV22,DILTV19} in the Euclidean setting. Moreover, the concept of Muckenhoupt exponent introduced in \cite{ALMV22} for subsets $E $ of $\R^n $ allowed us to determine the mentioned range of exponents, provided $E$ is a weakly porous set in $\R^n$. Here we extend these results to metric spaces with doubling measure $(X,d,\mu).$

\begin{definition}[Muckenhoupt exponent of a set]
Let $E\subset X$. If $h_E(B(x,R))>0$ for every $x\in E$ and $R>0$,
then the
\emph{Muckenhoupt exponent} $\Mu(E)$
is the supremum of the numbers $\alpha\in \R$ 
for which there exists a constant $C$ such that 
\begin{equation}\label{e.muckdim}
\frac{\mu (E_r\cap B(x,R))}{\mu( B(x,R) )} \le C\biggl(\frac {h_E(B(x,R))}{r}\biggr)^{-\alpha}
\end{equation}
for every $x\in E$ and $0<r  <  h_E(B(x,R)) \le 2R$. 
If $h_E(B(x,R))=0$ for some $x\in E$ and $R>0$,
then we set $\Mu(E)=0$. 
 \end{definition}
 
Here and below, we denote $E_r:=\lbrace y\in X \, : \, \dist(y,E) <r \rbrace, $ for every $r>0.$ The first goal of this section is to prove, for any nonempty subset $E\subset X$, that for $\alpha>0,$
$$
\Mu(E)> \alpha \: \text{ if and only if } \: \dist(\cdot,E)^{-\alpha} \in A_1(X).
$$
This is precisely Theorem \ref{t.aikawa_assouad} below, a consequence of the next two lemmas. We will only give a sketch of the proofs, because they are almost identical to the proofs of the corresponding results in $\R^n$ from \cite{ALMV22}, if one uses the doubling condition \eqref{Cmudoublingmeasure} for the measure $\mu$ at the appropriate points.

\begin{lemma} \label{A1impliesMu}
Let $E\subset X$ be a nonempty set and let $\alpha \geq 0$  be
such that $\dist(\cdot,E)^{-\alpha}\in A_1(X)$. Then $0\le \alpha\le \Mu(E)$. 
\end{lemma}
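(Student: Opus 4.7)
The plan is to verify the defining inequality \eqref{e.muckdim} of $\Mu(E)$ for the specific exponent $\alpha$ given in the hypothesis, which places $\alpha$ inside the supremum and hence yields $\alpha\le\Mu(E)$. The case $\alpha=0$ is trivial (the inequality holds with $C=1$), so the real work is for $\alpha>0$. A preliminary observation: by Lemma \ref{resultnecessityweakporosity}, the assumption $\dist(\cdot,E)^{-\alpha}\in A_1(X)$ forces $E$ to be weakly porous, so $h_E(B)>0$ for every ball $B$; this is precisely what makes $\Mu(E)$ defined through \eqref{e.muckdim}.

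Now fix $x\in E$, $R>0$, and $r$ with $0<r<h:=h_E(B(x,R))\le 2R$. Set $B:=B(x,R)$ and $w:=\dist(\cdot,E)^{-\alpha}$. The plan is to bound both sides of the $A_1$ inequality
\[
\vint_B w\,d\mu \le [w]_{A_1}\,\essinf_B w.
\]
For the integral side, I restrict to $E_r\cap B$, on which $\dist(\cdot,E)<r$ gives the pointwise bound $w>r^{-\alpha}$; hence
\[
\int_B w\,d\mu \ge r^{-\alpha}\,\mu(E_r\cap B).
\]
For the essinf side, I exploit $h$ directly: for any $s<h$ there exists $y\in X$ with $B(y,s)\subset B\setminus E$, and the triangle inequality shows $\dist(z,E)\ge s/2$ for every $z\in B(y,s/2)$. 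Since $B(y,s/2)$ has positive measure (by \eqref{Cmudoublingmeasure}) and is contained in $B$, this yields $\essinf_B w\le (s/2)^{-\alpha}$. Letting $s\nearrow h$ gives $\essinf_B w\le 2^\alpha h^{-\alpha}$.

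Combining these two bounds via the $A_1$ condition produces
\[
\frac{\mu(E_r\cap B)}{\mu(B)} \le 2^{\alpha}\,[w]_{A_1}\,\Bigl(\frac{h}{r}\Bigr)^{-\alpha},
\]
which is precisely \eqref{e.muckdim} for the exponent $\alpha$ with constant $C=2^\alpha[w]_{A_1}$, independent of $x,R,r$. Therefore $\alpha$ belongs to the set over which $\Mu(E)$ is a supremum, so $\alpha\le\Mu(E)$. There is no substantial obstacle here: the argument is a direct unwinding of the two defining inequalities, and the only point requiring prior results is the observation that $h_E>0$ everywhere, which is handed to us by the weak porosity consequence of the $A_1$ hypothesis.
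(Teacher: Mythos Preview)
Your proof is correct and follows essentially the same approach as the paper: both bound the integral from below by restricting to $E_r\cap B$, bound the essential infimum from above using a near-maximal $E$-free ball, and combine via the $A_1$ inequality to obtain \eqref{e.muckdim} with constant $C=2^\alpha[w]_{A_1}$. The only cosmetic difference is that the paper estimates $\essinf_B w$ by evaluating at the center $y$ of a ball $B(y,s)\subset B\setminus E$ with $s\ge h/2$ (using $\dist(y,E)\ge s$), while you use the positive-measure set $B(y,s/2)$ and let $s\nearrow h$; both routes yield the same bound $2^\alpha h^{-\alpha}$.
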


\begin{proof}
The claim holds if $\alpha=0$, and so we may assume that $\alpha>0$. Then $h_E(B(x,R))>0$ for every $x\in E$ and $R>0$, and if $0<r < h_E(B(x,R))\le 2R$, we have
$$
\mu \left( E_r \cap B(x,R) \right) \leq r^\alpha \int_{  B(x,R)} \dist(\cdot,E)^{-\alpha} \ud \mu  \leq [w]_{A_1} r^\alpha \mu \left( B(x,R) \right) \essinf_{B(x,R)} \dist(\cdot,E)^{-\alpha};
$$
where $w=\dist(\cdot,E)^{-\alpha}.$ On the other hand, taking $y\in X$ and $s>0$ so that $h_E(B(x,R)) \geq s \geq \frac{1}{2} h_E(B(x,R))$ and $B(y,s) \subset B(x,R) \setminus E,$ we obtain
$$
 \essinf_{B(x,R)} \dist(\cdot,E)^{-\alpha} \leq \dist(y,E)^{-\alpha} \leq s^{-\alpha} \leq 2^{\alpha} \left(  h_E(B(x,R)) \right)^{-\alpha}.
$$
The two inequalities lead us to $\Mu(E)>\alpha.$ 
\end{proof}

\begin{lemma}\label{muimpliesA1}
Let $E\subset X$ be a nonempty
set 
and assume that $0\le \alpha < \Mu(E)$. Then 
$\dist(\cdot,E)^{-\alpha}\in A_1(X)$.
\end{lemma}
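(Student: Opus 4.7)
The plan is to derive the $A_1$ estimate $\vint_B \dist(\cdot, E)^{-\alpha}\, d\mu \leq C\, \essinf_B \dist(\cdot, E)^{-\alpha}$ on an arbitrary ball $B = B(x_0, R)\subset X$ by applying the Muckenhoupt exponent inequality with some intermediate exponent $\alpha'\in(\alpha,\Mu(E))$---for which there exists $C_0$ with $\mu(E_r\cap B(x,R))/\mu(B(x,R)) \leq C_0 (r/h_E(B(x,R)))^{\alpha'}$ whenever $x\in E$ and $0<r<h_E(B(x,R))\leq 2R$---combined with a layer-cake decomposition of $\int_B \dist(\cdot, E)^{-\alpha}\, d\mu$. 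The argument splits according to how close $B$ sits to $E$. In the \emph{far} case $\dist(x_0,E)\geq 2R$, every $y\in B$ satisfies $\dist(y,E)\in[\dist(x_0,E)-R,\dist(x_0,E)+R]$; the ratio of these endpoints is at most $3$, so $\dist(\cdot,E)^{-\alpha}$ oscillates on $B$ by a factor at most $3^\alpha$, and the $A_1$ inequality follows with that constant.

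The main case is $\dist(x_0,E)<2R$. Pick $x^*\in E$ with $d(x_0,x^*)<2R$ and set $B^* := B(x^*,5R)$, so that $B\subset B^*$ and $\mu(B^*)\leq C_\mu^3\,\mu(B)$ by doubling. Write $D := \esssup_B \dist(\cdot,E)$, so $\essinf_B \dist(\cdot,E)^{-\alpha} = D^{-\alpha}$; by the triangle inequality $D\leq 3R$, and $D>0$ since the Muckenhoupt exponent estimate applied as $r\to 0^+$ forces $\mu(\overline{E}\cap B(x,R))=0$ for every $x\in E$, $R>0$, hence $\mu(\overline{E})=0$ by separability. Write also $h^* := h_E(B^*)$, which is positive by the hypothesis $\Mu(E)>0$. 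The pivotal step is the comparison
\[
h^* \geq D/2.
\]
To verify this, for every $\epsilon\in(0,D)$ the set $\{y\in B : \dist(y,E)>D-\epsilon\}$ has positive measure; picking any such $y$, the ball $B(y,(D-\epsilon)/2)$ avoids $E$, is contained in $B^*$ (since $d(y,x^*)+(D-\epsilon)/2 < 3R+3R/2 < 5R$), and has radius at most $2R^*=10R$, so it is admissible in the supremum defining $h^*$; letting $\epsilon\to 0^+$ gives the claim.

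Once $h^*\geq D/2$ is available, combine the layer-cake identity $\int_B \dist(\cdot,E)^{-\alpha}\, d\mu = \alpha\int_0^\infty t^{-\alpha-1}\mu(E_t\cap B)\, dt$ with the following bounds on $\mu(E_t\cap B)\leq \min\{\mu(B), \mu(E_t\cap B^*)\}$: on $(0,h^*)$, Muckenhoupt at $B^*$ and doubling give $\mu(E_t\cap B) \leq C(t/h^*)^{\alpha'}\mu(B)$, integrating (using $\alpha'>\alpha$) to a contribution of order $\mu(B)(h^*)^{-\alpha}$; on $[h^*,D]$ the trivial bound contributes at most $\mu(B)[(h^*)^{-\alpha}-D^{-\alpha}]$; on $(D,\infty)$ one has $\mu(E_t\cap B)=\mu(B)$ (since $\dist(\cdot,E)\leq D$ a.e.\ on $B$), contributing $\mu(B)D^{-\alpha}$. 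Summing these three pieces and invoking $(h^*)^{-\alpha}\leq 2^\alpha D^{-\alpha}$ from the pivotal step yields $\vint_B \dist(\cdot,E)^{-\alpha}\, d\mu \leq C' D^{-\alpha} = C'\essinf_B \dist(\cdot,E)^{-\alpha}$, which is the $A_1$ condition for balls.

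The main obstacle is the pivotal comparison $h^*\geq D/2$: this is precisely what converts the Muckenhoupt bound at the \emph{intrinsic} scale $h^*$ (which could be much smaller than $R$) into one comparable with the $\essinf$-scale $D$. It forces a careful choice of the enlargement factor in $B^*=B(x^*,5R)$, large enough that an $E$-free ball of radius close to $D/2$ centered at a near-extremal point of $B$ fits inside $B^*$ while satisfying the $2R^*$ threshold in the definition of $h_E$, yet not so large that the doubling constants blow up.
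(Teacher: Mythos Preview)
Your proof is correct and follows essentially the same approach as the paper's (which itself defers details to \cite{ALMV22}): a near/far dichotomy relative to $E$, enlargement to a ball centered on $E$, and control of the integral via the Muckenhoupt level-set bound, with the crucial comparison between $h_E$ of the enlarged ball and $\esssup_B\dist(\cdot,E)$. The only cosmetic differences are that you use a continuous layer-cake decomposition in place of the paper's dyadic annuli $A_j$, and you treat arbitrary balls directly rather than first establishing the estimate for balls centered on $E$ and then transferring.
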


\begin{proof}
We follow the arguments from the proof in the Euclidean setting \cite[Theorem 6.5]{ALMV22}. We first prove the $A_1$-estimate \eqref{d.A1} for balls $B(x,r)$ centered at points $x\in E:$
\begin{equation}\label{e.suff}
\vint_{B(x,r)} \dist(y,E)^{-\alpha}\ud \mu(y)
\le  C_0(C_\mu,\lambda,\alpha,E) \essinf_{y\in B(x,r)}\dist(y,E)^{-\alpha}.
\end{equation}
Let $\lambda>0$ with $\Mu(E)>\lambda>\alpha$,
and let $x\in E$ and $r>0$.  
Observe from inequality $\Mu(E)>0$ that  $0<h_E(B(x,2r))\le 4r$. 
Hence, there is $j_0\in \Z, j_0 \geq -1$ such that 
\[
2^{-j_0} r  <  h_E(B(x,2r)) \le  2^{1-j_0} r.
\]
Define $F_j=\{y\in B(x,r) : \dist(y,E) \le 2^{1-j}r \}$ and $ A_{j}=F_{j}\setminus F_{j+1},$ for $j\ge j_0$. Reasoning as in the proof of \cite[Theorem 6.5]{ALMV22} and using that $\mu(B(x,2r)) \leq C_\mu\, \mu(B(x,r)),$ we obtain a constant $C_1=C_1(E,\lambda,C_\mu)$ such that
\begin{equation}\label{e.level_est}
\begin{split}
 \frac {\mu\left( F_{j} \right)}{\mu \left( B(x,r) \right)} \leq C_\mu  \frac {\mu\left( E_{2^{2-j}r} \cap B(x,2r) \right)}{\mu \left( B(x,2r) \right)}   \leq 
  C_1 2^{-j\lambda}\biggl(\frac {h_E(B(x,2r))}{r}\biggr)^{-\lambda}.
\end{split}
\end{equation}
We then deduce that $\mu( E ) =0$. The union of the sets $A_j$ with $j\ge j_0$ cover $B(x,r)$ up to the zero-measure set 
$\iol E\cap B(x,r)$. Integrating over these annuli $\lbrace A_j\rbrace_{j \geq j_0}$ as in \cite[Theorem 6.5]{ALMV22} and using~\eqref{e.level_est}, we obtain \eqref{e.suff}, for the points $x\in E.$

Now, if $B=B(x,R)$ is an arbitrary ball in $X$, suppose first that $\dist(B,E)<2 \diam(B).$ Let $p\in E$ be so that
$$
\dist(E,B) \leq \dist(p,B) \leq 2\dist(E,B) \leq 4 \diam(B).
$$
It is clear that $d(p,x) \leq \diam(B) + \dist(p,B) \leq 5 \diam(B) \leq 10R,$ implying $B(x,R) \subset B(p, 11 R) \subset B(x, 21 R).$ Because \eqref{e.suff} holds for $B(p,11R),$ we have
\begin{align*}
\vint_{B(x,R)} \dist(\cdot,E)^{-\alpha}\ud \mu & \le C(C_\mu,C_0) \vint_{B(p,11 R)} \dist(\cdot,E)^{-\alpha} \ud \mu \\
& \leq  C(C_\mu,C_0)  \essinf_{y\in B(p,11R)}\dist(y,E)^{-\alpha}   \leq  C(C_\mu,C_0)  \essinf_{y\in B(x,R)}\dist(y,E)^{-\alpha} .
\end{align*}
And in the case where $\dist(B,E) \geq 2 \diam(B),$ the triangle inequality gives $\dist(y,E) \leq 2 \dist(z,E)$ for any $y,z\in E,$ and so the $A_1$ estimate trivially holds over $B.$
\end{proof}

\begin{theorem}\label{t.aikawa_assouad}
Let $\alpha>0$ and let $E\subset X$ be a nonempty 
set. Then $\dist(\cdot,E)^{-\alpha}\in A_1(X)$ if and only if $\Mu(E)>\alpha$. 
\end{theorem}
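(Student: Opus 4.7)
The plan is to combine Lemma \ref{A1impliesMu} and Lemma \ref{muimpliesA1} directly, using the self-improvement property of $A_1$ weights from Proposition \ref{propositionselfimprovement} to upgrade the non-strict inequality in Lemma \ref{A1impliesMu} to the strict inequality required by the definition of $\Mu(E)$.

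For the sufficiency direction ($\Mu(E)>\alpha \Rightarrow \dist(\cdot,E)^{-\alpha}\in A_1(X)$), the conclusion is precisely the statement of Lemma \ref{muimpliesA1}, so no additional work is needed.

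For the necessity direction ($\dist(\cdot,E)^{-\alpha}\in A_1(X) \Rightarrow \Mu(E)>\alpha$), I would first apply Proposition \ref{propositionselfimprovement}(1) to the weight $w=\dist(\cdot,E)^{-\alpha}$ to obtain some exponent $s=s(\alpha,[w]_{A_1},C_\mu)>1$ such that $w^s=\dist(\cdot,E)^{-\alpha s}\in A_1(X)$. Then Lemma \ref{A1impliesMu} applied to the weight $w^s$ (with exponent $\alpha s$ in place of $\alpha$) yields $\alpha s \le \Mu(E)$. Since $s>1$ and $\alpha>0$, this gives $\Mu(E)\ge \alpha s > \alpha$, as desired.

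I do not expect any serious obstacle here since both lemmas and the self-improvement property are already available; the only subtlety is remembering that the definition of $\Mu(E)$ is a supremum, so Lemma \ref{A1impliesMu} by itself only delivers $\alpha\le\Mu(E)$, and the strict inequality must come from the openness property embodied in the self-improvement of $A_1$ weights.
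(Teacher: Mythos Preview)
Your proposal is correct and matches the paper's own proof essentially verbatim: sufficiency is Lemma \ref{muimpliesA1}, and necessity combines the self-improvement from Proposition \ref{propositionselfimprovement}(1) with Lemma \ref{A1impliesMu} exactly as you describe. Your remark that Lemma \ref{A1impliesMu} alone only gives the non-strict inequality, and that the strictness is supplied by the self-improvement exponent $s>1$, is precisely the point the paper is making.
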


\begin{proof}
If $\Mu(E) >\alpha,$ then $\dist(\cdot,E)^{-\alpha}\in A_1(X)$ by Lemma \ref{muimpliesA1}. For the reverse implication, we use the self-improvement of $A_1$ weights (see Proposition \ref{propositionselfimprovement}(1)) and Lemma \ref{A1impliesMu}.
\end{proof}

\begin{theorem}\label{t.A_p_char}
Assume that $E$ is weakly porous and $h_E$ is doubling. Let $\alpha\in\R$ and define $w(x)=\dist(x,E)^{-\alpha}$ for every $x\in X$. 
Then 
\begin{enumerate}[label=\textup{(\roman*)}]
\item $w\in A_1(X)$ if and only if $0\le \alpha < \Mu(E)$.
\item $w\in A_p(X)$, for $1<p<\infty$, if and only if 
$$
(1-p)\Mu(E) < \alpha < \Mu(E).
$$
\end{enumerate}
\end{theorem}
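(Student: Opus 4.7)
The plan is to derive Theorem~\ref{t.A_p_char} from the $A_1$--Muckenhoupt-exponent equivalence of Theorem~\ref{t.aikawa_assouad}, combined with the standard $A_p$--duality $w\in A_p(X)\Longleftrightarrow w^{-1/(p-1)}\in A_{p'}(X)$ (where $p'=p/(p-1)$) and the self-improvement in Proposition~\ref{propositionselfimprovement}(2). A useful preliminary observation is that the hypotheses force $\Mu(E)>0$: Theorem~\ref{maintheoremallspaces} produces some $\alpha_0>0$ with $\dist(\cdot,E)^{-\alpha_0}\in A_1(X)$, and Lemma~\ref{A1impliesMu} then gives $\Mu(E)\geq \alpha_0>0$. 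I will fix such an $\alpha_0\in(0,\Mu(E))$ and write $u:=\dist(\cdot,E)^{-\alpha_0}\in A_1(X)$.

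For part~(i), the case $\alpha>0$ is exactly Theorem~\ref{t.aikawa_assouad}, while $\alpha=0$ is immediate. For $\alpha<0$, the weight $w=\dist(\cdot,E)^{|\alpha|}$ is a positive power of the distance function; by the comment following Theorem~\ref{maintheoremallspaces} (or a direct verification: on any ball $B$ meeting $\overline{E}$ one has $\essinf_B w=0$, while $\vint_B w\,d\mu>0$ because $\mu(E)=0$ under weak porosity), $w\notin A_1(X)$, which is consistent with $\alpha\notin[0,\Mu(E))$.

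For the sufficiency in part~(ii), I will split into cases. If $\alpha\in[0,\Mu(E))$, then by part~(i) together with $A_1\subset A_p$ one immediately gets $w\in A_p$. If $\alpha\in((1-p)\Mu(E),0)$, set $\widetilde\alpha:=-\alpha/(p-1)\in(0,\Mu(E))$; by (i), $\dist(\cdot,E)^{-\widetilde\alpha}\in A_1\subset A_{p'}$, and applying duality to $w^{-1/(p-1)}=\dist(\cdot,E)^{-\widetilde\alpha}$ yields $w\in A_p$.

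The necessity direction is the delicate one, and it is here that Proposition~\ref{propositionselfimprovement}(2) is decisive. Assume $w\in A_p(X)$. If $\alpha>0$, write $w=u^{s}$ with $s:=\alpha/\alpha_0\geq 0$; since $u\in A_1$ and $u^s=w\in A_p\subset\bigcup_{q>1}A_q$, Proposition~\ref{propositionselfimprovement}(2) forces $u^s=w\in A_1(X)$, whence Theorem~\ref{t.aikawa_assouad} gives $\alpha<\Mu(E)$ (the bound $\alpha>(1-p)\Mu(E)$ is automatic since $\alpha>0$). If $\alpha<0$, apply duality first: $w^{-1/(p-1)}=\dist(\cdot,E)^{-\widetilde\alpha}\in A_{p'}(X)$ with $\widetilde\alpha=-\alpha/(p-1)>0$, and the previous case yields $\widetilde\alpha<\Mu(E)$, i.e., $\alpha>(1-p)\Mu(E)$. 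The main obstacle is precisely this lifting step from $A_p$ back to $A_1$ in the necessity argument: without Proposition~\ref{propositionselfimprovement}(2), the natural alternative would be Jones-type factorization in doubling metric spaces, which still leaves the additional burden of realizing the factors as specific powers of $\dist(\cdot,E)$.
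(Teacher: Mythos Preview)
Your proposal is correct and follows essentially the same approach as the paper, which defers to \cite[Theorem~1.2]{ALMV22}: you invoke Theorem~\ref{maintheoremallspaces} to secure $\Mu(E)>0$, handle part~(i) via Theorem~\ref{t.aikawa_assouad}, and for part~(ii) combine the $A_p$--$A_{p'}$ duality with the lifting step in Proposition~\ref{propositionselfimprovement}(2). The only minor remark is that your necessity case split omits $\alpha=0$, but this case is trivial since $w\equiv 1$ and $(1-p)\Mu(E)<0<\Mu(E)$.
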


\begin{proof} 
By Theorem \ref{maintheoremallspaces}, there exists $\alpha>0$ for which $\dist(\cdot,E)^{-\alpha} \in A_1(X).$ Taking into account the self-improvement properties of Proposition \ref{propositionselfimprovement}, the proof is identical to the one for the corresponding result in $\R^n$ from \cite[Theorem 1.2]{ALMV22}.
\end{proof}

Finally, if $X$ satisfies the annular decay property \eqref{measureannulardecayproperty}, the assumption that $h_E$ is doubling can be done away with in Theorem \ref{t.A_p_char}.

\begin{corollary}\label{t.A_p_charannulardecay}
Suppose that $(X,d,\mu)$ satisfies \eqref{measureannulardecayproperty}, and let $E \subset X$ be weakly porous. Let $\alpha\in\R$ and define $w(x)=\dist(x,E)^{-\alpha}$ for every $x\in X$. 
Then 
\begin{enumerate}[label=\textup{(\roman*)}]
\item $w\in A_1(X)$ if and only if $0\le \alpha < \Mu(E)$.
\item $w\in A_p(X)$, for $1<p<\infty$, if and only if 
$$
(1-p)\Mu(E) < \alpha < \Mu(E).
$$
\end{enumerate}
\end{corollary}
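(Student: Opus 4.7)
The plan is that this corollary follows from a direct combination of Lemma \ref{lemmaannulardecayweaklyporousdoubling} with the earlier Theorem \ref{t.A_p_char}. The annular decay property \eqref{measureannulardecayproperty} is precisely the additional hypothesis that allows one to bypass the doubling assumption on $h_E$ that appears in Theorem \ref{t.A_p_char}, so the entire task is to plug in the bridging lemma at the right spot.

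Concretely, I would first note that since $E$ is weakly porous and $(X,d,\mu)$ satisfies \eqref{measureannulardecayproperty}, Lemma \ref{lemmaannulardecayweaklyporousdoubling}(ii) immediately gives that $h_E$ is doubling with a power-type bound $h_E(B(x,LR_0))\le bL^d\,h_E(B(x,R_0))$ for all $L\ge 1$. In particular, $h_E(B)>0$ for every ball $B\subset X$, which ensures that the Muckenhoupt exponent $\Mu(E)$ is well defined in the sense of the definition preceding Theorem \ref{t.aikawa_assouad}. Thus both hypotheses of Theorem \ref{t.A_p_char}, namely weak porosity of $E$ and doubling of $h_E$, are in force.

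The proof then concludes by invoking Theorem \ref{t.A_p_char} directly: part (i) yields $w=\dist(\cdot,E)^{-\alpha}\in A_1(X)$ if and only if $0\le \alpha<\Mu(E)$, and part (ii) yields $w\in A_p(X)$ for $1<p<\infty$ if and only if $(1-p)\Mu(E)<\alpha<\Mu(E)$. No additional quantitative argument is needed, as the constants that arise in Theorem \ref{t.A_p_char} already absorb the doubling constants of $h_E$ that Lemma \ref{lemmaannulardecayweaklyporousdoubling} supplies.

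The only step requiring any thought is the verification that Lemma \ref{lemmaannulardecayweaklyporousdoubling} is applicable in the full generality of the corollary, but this is immediate from its statement since it only requires weak porosity together with \eqref{measureannulardecayproperty}. Consequently there is no genuine obstacle, and the proof reduces to a one-line deduction: \emph{by Lemma \ref{lemmaannulardecayweaklyporousdoubling}, $h_E$ is doubling, and the conclusion follows from Theorem \ref{t.A_p_char}.}
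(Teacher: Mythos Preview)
Your proposal is correct and follows essentially the same approach as the paper: invoke Lemma \ref{lemmaannulardecayweaklyporousdoubling} to obtain that $h_E$ is doubling, and then apply Theorem \ref{t.A_p_char} directly. The paper's proof is indeed the one-line deduction you describe, and your additional remark that $h_E(B)>0$ ensures $\Mu(E)$ is well defined is a harmless elaboration.
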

\begin{proof}
Since $(X,d,\mu)$ satisfies \eqref{measureannulardecayproperty}, for every weakly porous set $E$, the maximal $E$-free hole function $h_E$ is doubling by virtue of Lemma \ref{lemmaannulardecayweaklyporousdoubling}. Now, it suffices to apply Theorem \ref{t.A_p_char}.
\end{proof}

\bibliographystyle{abbrv}

\begin{thebibliography}{10}

\bibitem{ALMV22}
T.C. Anderson, J. Lehrb\"ack, C. Mudarra, A. V\"ah\"akangas, {\em Weakly porous sets and Muckenhoupt $A_p$ distance functions}, J. Funct. Anal. 287 (2024), no. 8, 110558.

\bibitem{BBbook}
A. Bj\"{o}rn, J. Bj\"{o}rn, {\em Nonlinear potential theory on metric spaces}, EMS Tracts in Mathematics 17, European Mathematical Society (EMS), Z\"{u}rich, 2011, 978-3-03719-099-9.


\bibitem{BBL17}
A. Bj\"{o}rn, J. Bj\"{o}rn, J. Lehrb\"ack, {\em The annular decay property and capacity estimates for thin annuli}, Collect. Math. 68 (2017), no. 2, 229--241.


\bibitem{B01}
J. Bj\"{o}rn, {\em Poincar\'{e} inequalities for powers and products of admissible weights}, Ann. Acad. Sci. Fenn. Math. 26 (2001), no. 1, 175--188.


\bibitem{C90}
M. Christ, {\em A $T(b)$ theorem with remarks on analytic capacity and the Cauchy integral}, Colloq. Math. 60--61 (1990), no. 2, 601--628.


\bibitem{CT23}
E.K. Chrontsios Garitsis, J.T. Tyson, {\em Quasiconformal distortion of the Assouad spectrum and classification of polynomial spirals}, Bull. London Math. Soc. 55 (2023), 282--307.


\bibitem{DILTV19}
B.~Dyda, L.~Ihnatsyeva, J.~Lehrb\"{a}ck, H.~Tuominen, and A.~V. V\"{a}h\"{a}kangas, {\em Muckenhoupt {$A_p$}-properties of distance functions and applications to {H}ardy--{S}obolev type inequalities}, Potential Anal. 50 (2019), no.1, 83--105.

\bibitem{F2014book}
K. Falconer, {\em Fractal geometry}, John Wiley \& Sons, Ltd., Chichester, 2014, xxx+368 pp.

\bibitem{FeMu74}
C. Fefferman, B. Muckenhoupt, {\em Two nonequivalent conditions for weight functions}, Proc. Amer. Math. Soc. 45 (1974), 99--104.


\bibitem{FY22}
J.M. Fraser, H. Yu, {\em New dimension spectra: Finer information on scaling and homogeneity}, Adv. Math. 329 (2018), 273--328.


\bibitem{HKMbook}
J. Heinonen, T. Kilpel\"{a}inen, O. Martio, {\em Nonlinear potential theory of degenerate elliptic equations}, Dover Publications, Inc., Mineola, NY, 2006, xii+404 pp.

\bibitem{HK12}
T. Hyt\"{o}nen, A. Kairema, {\em Systems of dyadic cubes in a doubling metric space}, Colloq. Math. 126 (2012), no. 1, 1--33.


\bibitem{KLVbook}
J. Kinnunen, J. Lehrb\"{a}ck, A.V\"{a}h\"{a}kangas, {\em Maximal function methods for {S}obolev spaces}, Mathematical Surveys and Monographs 257, American Mathematical Society, Providence, RI, 2021.

\bibitem{KRS12}
A. K\"aenm\"aki, T. Rajala, V. Suomala, {\em Existence of doubling measures via generalised nested cubes}, Proc. Amer. Math. Soc. 140 (2012), no.9, 3275--3281.



\bibitem{KM21}
E.-K. Kurki, C. Mudarra, {\em On the extension of Muckenhoupt weights in metric spaces}, Nonlinear Anal., 215 (2022), Paper No. 112671, 20 pp.

\bibitem{L98}
J.~Luukkainen, {\em Assouad dimension: antifractal metrization, porous sets, and  homogeneous measures}, J. Korean Math. Soc., 35(1):23--76, 1998.

\bibitem{Sa91}
A. Salli, {\em On the Minkowski dimension of strongly porous fractal sets in $\R^n$}, Proc. London Math. Soc. (3) 62 (1991), no. 2, 353--372.

\bibitem{S96}
S. Semmes, {\em On the nonexistence of bi-Lipschitz parameterizations and geometric problems about $A_\infty$-weights}, Rev. Mat. Iberoamericana 12 (1996), no.2, 337--410.

\bibitem{ST}
J.-O. Str\"{o}mberg, A. Torchinsky, {\em Weighted {H}ardy spaces}, Lecture Notes in Mathematics 1381, Springer-Verlag, Berlin 1989, 3-540-51402-3.

\bibitem{V03}
A.~V. Vasin, {\em The limit set of a {F}uchsian group and the {D}yn\cprime kin lemma}, Zap. Nauchn. Sem. S.-Peterburg. Otdel. Mat. Inst. Steklov.
  (POMI), 303(Issled. po Line\u{\i}n. Oper. i Teor. Funkts. 31):89--101, 322,
  2003. (Russian);
English translation in {\em J. Math. Sci. (N.Y.)}, 129(4): 3977--3984, 2005.

\bibitem{W89}
I. Wik, {\em On Muckenhoupt's classes of weight functions}, Studia Math. 94 (1989), no. 3, 245--255.


\end{thebibliography}

\end{document}